\documentclass{article}



 \usepackage[nonatbib, preprint]{neurips_2019}



\usepackage[utf8]{inputenc} 
\usepackage[T1]{fontenc}    
\usepackage{hyperref}       
\usepackage{url}            
\usepackage{booktabs}       
\usepackage{amsfonts}       
\usepackage{nicefrac}       
\usepackage{microtype}      
\usepackage{subfigure}

\usepackage{amsmath,amsfonts,amssymb,amsthm,array}
\usepackage{mathtools}
\usepackage{algorithm}
\usepackage{caption}
\usepackage{algpseudocode,algorithm,algorithmicx}
\usepackage{bm}
\usepackage{color}
\usepackage{graphicx}
\def\<#1,#2>{\langle #1,#2\rangle}

\newcommand{\prox}{\operatorname{prox}}


\newcommand{\Prob}{\mathbb{P}}
\newcommand{\R}{\mathbb{R}}

\algrenewcommand\algorithmicrequire{\textbf{Input:}}
\algrenewcommand\algorithmicensure{\textbf{Initialize:}}

\newcommand{\compactify}{} 

\newcommand{\bG}{\mathbf{G}}

\newcommand{\eqdef}{\coloneqq}


\newcommand{\cL}{{\cal L}}

\newcommand{\cO}{{\cal O}}


\newcommand{\bE}{\mathbb{E}}

\theoremstyle{plain}
\newtheorem{theorem}{Theorem}[section]

\newtheorem{assumption}{Assumption}[section]
\newtheorem{lemma}[theorem]{Lemma}

\newtheorem{proposition}[theorem]{Proposition}
\newtheorem{definition}[theorem]{Definition}
\newtheorem{remark}{Remark}[]
\newtheorem{corollary}{Corollary}[]
\theoremstyle{remark}


\usepackage[colorinlistoftodos,bordercolor=orange,backgroundcolor=orange!20,linecolor=orange,textsize=scriptsize]{todonotes}

\title{L-SVRG and L-Katyusha with Arbitrary Sampling}

%

\author{%
  Xun Qian\\
  KAUST\thanks{King Abdullah University of Science and Technology, Thuwal, Saudi Arabia.}\\\
  \texttt{xun.qian@kaust.edu.sa} \\
   \And
   Zheng Qu \\
   The University of Hong Kong\\
   \texttt{zhengqu@hku.hk} \\
   \AND
   Peter Richt\'{a}rik \\
   KAUST${}^*$ and MIPT\thanks{Moscow Institute of Physics and Technology, Dolgoprudny, Russia.}\\
   \texttt{peter.richtarik@kaust.edu.sa} \\
}

\begin{document}

\maketitle

\begin{abstract}
We develop and analyze a new family of {\em nonaccelerated and accelerated  loopless variance-reduced methods} for finite sum optimization problems. Our convergence analysis relies on a novel expected smoothness condition which upper bounds the variance of the stochastic gradient estimation by a constant times a distance-like function. This  allows us to handle with ease {\em arbitrary sampling schemes} as well as the nonconvex case. We perform an in-depth estimation of these expected smoothness parameters  and propose new importance samplings which allow {\em linear speedup} when the expected minibatch size is in a certain range. Furthermore, a connection between these expected smoothness parameters and expected separable overapproximation (ESO) is established, which allows us to exploit  data sparsity as well. Our results recover as special cases the recently proposed  loopless SVRG and loopless Katyusha.
\end{abstract}


\tableofcontents

\clearpage

\section{Introduction}

In this work we consider the composite finite-sum optimization problem
\begin{equation}\label{primal}
\compactify \min_{x\in \mathbb{R}^d} P(x) \eqdef \frac{1}{n} \sum\limits_{i=1}^n  f_i(x) + \psi(x),
\end{equation}
where $f\eqdef\tfrac{1}{n}\sum_i  f_i$ is an average of a very large number of smooth functions  $f_i:\R^d\to \R$, and   $\psi:\R^d \to \R\cup \{+\infty\}$ is a proper closed convex function. We assume that problem (\ref{primal}) has at least one global optimal solution $x^*$.

{\bf Variance reduction.} Variance reduced methods for solving \eqref{primal} have recently become immensely popular and efficient alternatives of SGD \cite{Nemirovski-Juditsky-Lan-Shapiro-2009,RobbinsMonro:1951}. Among the first such methods proposed were SAG \cite{schmidt2017minimizing}, SAGA \cite{SAGA} and SVRG \cite{johnson2013accelerating, proxSVRG}, all with essentially identical theoretical complexity rates, but different practical use cases and different analysis techniques. While the first approaches to this were indirect and dual in nature \cite{SDCA}, it later transpired that variance reduced methods can be accelerated, in the sense of Nesterov, directly. The first such method, Katyusha~\cite{Katyusha}---an accelerated variant of SVRG---has become very popular due its optimal complexity rate, versatility and practical behavior.  Both SVRG and Katyusha have a two loop structure. In order for SVRG to obtain best convergence rate, the inner loop must be terminated after a number of iterations proportional to the condition number of the problem. However, this is often unknown, or hard to estimate, and this has led practitioners to devise various heuristic strategies instead, departing from theory.  

{\bf Loopless methods.} Recently, this problem was remedied by the so called {\em loopless} SVRG (L-SVRG) and {\em loopless} Katyusha (L-Katyusha)~\cite{LSVRG}. These methods  dispense off the outer loop, replacing it with a biased coin-flip to be performed in each step. This simple change makes the methods easier to understand, and easier to analyze. The worst case complexity bounds remain the same. Moreover, for L-SVRG the optimal probability of exit to the outer loop can be made independent of the condition number, which resolves the problem mentioned above, and makes the method more robust and markedly faster in practice. The analysis in \cite{LSVRG} was done in the strongly convex and smooth case $(\psi\equiv 0)$; rates in the convex and nonconvex case are not known.

{\bf Arbitrary sampling.} The {\em arbitrary sampling} paradigm to developing and analyzing stochastic algorithms allows for simultaneous study of of countless importance and minibatch sampling strategies, thus leading to a tight unification of two previously separate topics. It was first proposed in \cite{NSYNC} in the context of randomized coordinate descent methods. Since then, many stochastic methods were studied in this regime.  Methods already endowed with arbitrary sampling variants and analysis include, among others, the  primal-dual method Quartz~ \cite{Quartz}, accelerated randomized coordinate descent \cite{qu2016coordinate, qu2016coordinate2, FilipACD18},  stochastic primal-dual hybrid gradient method  \cite{SCP17}, SGD \cite{SGDAS}, and SAGA \cite{SAGA-AS}. All these methods were studied in a convex or strongly convex setting only. In the nonconvex case, an arbitrary sampling analysis was performed only recently in \cite{Samuel19}, for the SAGA, SVRG and SARAH methods, where an optimal sampling was developed. 

 \section{Contributions}
 
In this paper, we study L-SVRG and L-Katyusha with arbitrary sampling and  sampling with replacement strategies for problem (\ref{primal}). We now describe the sampling strategies employed, and give a summary of our complexity results.
 

\subsection{Sampling}

In the minibatch setting, a collection of the index is needed for each iteration. First we introduce the concepts of sampling and sampling with replacement. 

\begin{definition}[Sampling] 
A {\em sampling} $S$ is a random set-valued mapping with values being the subsets of $[n] \eqdef \{  1, 2, ..., n  \}$. It is uniquely characterized by the choice of probabilities $p_C \eqdef \Prob[S=C]$ associated with every subset $C$ of $[n]$.
Given a sampling $S$, we let $p_i \eqdef \Prob[i \in S] = \sum_{C:i\in C}p_C$. We say that $S$ is {\em proper} if $p_i>0$ for all $i$. We consider proper sampling only. 
\end{definition}

\begin{definition}[Group Sampling] 
A {\em  group sampling} $S$ is formed as follows. First, for each $i \in [n]$, distribute it a $p_i \in (0, 1]$. Then $[n]$ is divided into several groups $C_j$, $j = 1, ..., t\leq n$, where $C_{j_1} \cap C_{j_2} = \emptyset$ for $j_1 \neq j_2$ and $\cup_{j=1, ..., t}C_j = [n]$, such that $\sum_{i\in C_j} p_i \leq 1$ for all $j=1, ..., t$. Finally, each group $C_j$ have a chance to be chosen one index from it with probability $\sum_{i\in C_j}p_i$, and the only one index $i$ is chosen with probability $p_i/\sum_{s \in C_j}p_s$ where $i \in C_j$ within each group. We call an index $i$ is isolated if $i$ itself forms a group. 
\end{definition}


For group sampling $S$, it is easy to see that $\mathbb{P}[i \in S] = (p_i/\sum_{s \in C_j}p_s)\cdot\sum_{s \in C_j}p_s = p_i$. Group sampling contains independent sampling as a special case i.e., if every index $i$ is isolated for a group sampling, then it is independent sampling. Independent sampling is often studied in the arbitrary sampling paradigm \cite{FilipACD18, Samuel19}, however, a drawback is that the cost for each sample is ${\cal O}(n)$. While group sampling has the following nice property. 

\begin{lemma}\label{lm:groups}
	For any set $\{ p_i \}_{i=1}^n$ with $p_i\in (0, 1]$ and $1\leq \tau = \sum_{i\in [n]} p_i \leq n$, if $\tau$ is an integer, then there exists a group sampling $S$ such that $\mathbb{P}[i\in S] = p_i$ and the number of groups $t \leq 2\tau -1$. If $\tau$ is not an integer, then there exists a group sampling $S$ such that $\mathbb{P}[i\in S] = p_i$ and the number of groups $t < 2\tau +1$. 
\end{lemma}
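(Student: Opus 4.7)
The plan is to view Lemma \ref{lm:groups} as a bin-packing problem: the items $p_i \in (0,1]$ must be packed into bins of capacity $1$, and I need to bound the number of bins used by $2\tau - 1$ or (just under) $2\tau + 1$ depending on whether $\tau$ is integral.

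First I would construct the partition greedily. Start from the singleton partition $\{\{1\},\ldots,\{n\}\}$ and repeatedly merge any two groups whose combined $p$-mass is at most $1$. Since each merge strictly decreases the number of groups, the process terminates in at most $n-1$ steps, and the resulting partition $\{C_1,\ldots,C_t\}$ is valid in the sense of the definition, i.e.\ $w_j \eqdef \sum_{i \in C_j} p_i \leq 1$ for every $j$. The key property at termination is that \emph{any} two surviving groups $C_{j_1}, C_{j_2}$ must satisfy $w_{j_1} + w_{j_2} > 1$, for otherwise the merging rule would still fire on them.

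Next I would count via a pairing argument. Order the groups arbitrarily. If $t$ is even, partition them into $t/2$ disjoint pairs and sum the pairwise inequality $w_{j_1} + w_{j_2} > 1$ over all pairs to obtain $\tau = \sum_j w_j > t/2$, i.e.\ $t < 2\tau$. If $t$ is odd, form $(t-1)/2$ pairs and leave one group behind; the same calculation (together with the trivial bound $w_j > 0$ on the leftover) gives $\tau > (t-1)/2$, i.e.\ $t < 2\tau + 1$.

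Finally I would finish by a short parity case analysis. When $\tau$ is not an integer, the bound $t < 2\tau + 1$ (valid in both sub-cases) is exactly the claim. When $\tau$ is an integer, parity lets me sharpen both sub-cases to $t \leq 2\tau - 1$: for even $t$, both $t$ and $2\tau$ are even integers satisfying $t < 2\tau$, so $t \leq 2\tau - 2$; for odd $t$, the inequality $t < 2\tau + 1$ between two odd integers forces $t \leq 2\tau - 1$. The greedy construction and the pairing estimate are both elementary; I expect the only real obstacle is keeping the parity bookkeeping clean so that the integer case indeed yields the tight bound $2\tau - 1$ rather than $2\tau + 1$.
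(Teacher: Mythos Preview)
Your proof is correct and takes essentially the same approach as the paper: a greedy bin-packing construction ensuring that pairs of groups have combined mass exceeding $1$, followed by the identical pairing and parity case analysis. The only cosmetic difference is the greedy rule---the paper accumulates $p_1,\ldots,p_n$ sequentially (so only \emph{consecutive} groups $C_j,C_{j+1}$ are guaranteed to overflow when combined), whereas your merge-until-stable rule yields the slightly stronger property for \emph{all} pairs; since the counting step only needs $\lfloor t/2\rfloor$ disjoint pairs, both versions suffice.
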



 Apart from the sampling, we also consider the case where $S$ is consisted of {\em $\tau$ independent copies of $i$ from a distribution ${\tilde {\cal D}}$ with replacement}, which is also studied for Katyusha in \cite{Katyusha}. The distribution ${\tilde {\cal D}}$ is to output $i$ with probability ${\tilde p}_i$. In this way, $S$ is different with the sampling generally, and we call $S$ sampling with replacement. In fact, $S$ may contain multiple copies of a same index, and hence $S$ is not a set. When we take expectation with respect to the sampling with replacement $S$, it means that we take expectation with respect to $\tau$ independent copies of $i$. 
 
 \vskip 2mm
 
 For a sampling $S$ or sampling with replacement $S$, we define ${\bf I}_S$ as a diagonal matrix whose $i$-th diagonal entry is the number of copies of $i$ in $S$. Like in \cite{SAGA-AS}, we introduce a random diagnal matrix ${\theta}_{S} \in \R^{n\times n}$, where the $i$-th diagonal entry we denote by ${\theta}_S^i$. For sampling with replacement, let ${\theta_{S}^i \equiv 1/{\tau {\tilde p}_i}}$. Then 
\[\compactify 
 \mathbb{E}[{\theta}_S {\bf I}_S] e = \mathbb{E}\left[\sum_{i\in S} \frac{1}{\tau {\tilde p}_i} e_i \right] = \tau \mathbb{E}_{i\sim {\tilde {\cal D}}} \left[  \frac{1}{\tau {\tilde p}_i} e_i \right] = e. 
\]
 For a sampling $S$, we make the following assumption. 
 
 \begin{assumption}\label{as:SthetaS}
 For a sampling $S$ and $\theta_S \in \R^{n\times n}$, $\mathbb{E}[{\theta}_S {\bf I}_S] e = e$. 
 \end{assumption}
 
 It should be noticed that for a proper sampling, Assumption \ref{as:SthetaS} can be satisfies by $\theta_S^i = 1/p_i$. 
 Let $F: \mathbb{R}^d \to \mathbb{R}^n$ be defined by
$
 F(x) \eqdef (f_1(x), ..., f_n(x))^\top$
 and let 
 $
 {\bf G}(x) \eqdef [\nabla f_1(x), ..., \nabla f_n(x)] \in {\mathbb{R}^{d\times n}}
 $
 be the Jacobian of $F$ at $x$. Then the search direction $g^k$ in minibatch setting can be denoted as 
 $$
\compactify g^k = \frac{1}{n}({\bf G}(x^k) - {\bf G}(w^k)){\theta}_{S_k}{\bf I}_{S_k}e + \frac{1}{n}{\bf G}(w^k)e. 
 $$
 
 \subsection{Complexity rates and sparsity}
 
 {\bf Strongly convex case.}  For L-SVRG, the iteration complexity is at least as good as that of SAGA-AS \cite{SAGA-AS} and Quartz \cite{Quartz}. Assume $f$ is $L_f$-smooth and $f_i$ is $L_i$-smooth. For importance sampling and importance sampling with replacement, we can obtain linear speed up with respect to the expected minibatch size $\tau$ until $\tau=n$ or until the iteration complexity becomes ${\cal O}\left(  \left(  \nicefrac{n}{\tau} + \nicefrac{L_f}{\mu}  \right)\log \tfrac{1}{\epsilon}  \right)$, where $\mu$ is the strongly convexity constant of $P$. For L-Katyusha, the iteration complexity is essentially the same with that of Katyusha \cite{Katyusha}, and has linear speed up with respect to the expected minibatch size $\tau$ until $\tau=n$ or until the iteration complexity becomes ${\cal O} ((  \nicefrac{n}{\tau} + \sqrt{\nicefrac{L_f}{\mu} } )\log \nicefrac{1}{\epsilon} )$. While in minibatch setting, Katyusha \cite{Katyusha} is only studied for the sampling with replacement. The estimation of ${\cal L}_2$ also gives the convergence result of Katyusha with arbitrary sampling. Furthermore, L-Katyusha is simpler and faster considering the running time in practice.

 {\bf Nonconvex and smooth case.} The first arbitrary sampling analysis in a nonconvex setting was performed in \cite{Samuel19}. Our iteration complexity of L-SVRG with importance sampling and importance sampling with replacement is at least as good as that of SAGA and SVRG with optimal sampling in \cite{Samuel19}, and could be better if $L_f$ is smaller than ${\bar L} \eqdef \sum_{i\in [n]}L_i/n$. Moreover, we can obtain linear speed up with respect to $\tau$ until $\tau = n$ or until the iteration complexity becomes ${\cal O}\left(  \nicefrac{L_f}{\epsilon}  \right)$, while the results in \cite{Samuel19} holds for $\tau\leq {\cal O}(n^{\nicefrac{2}{3}})$ only.

 {\bf Sparsity} All our convergence results rely on some expected smoothness parameters such that we can analyze the algorithms with arbitrary sampling and sampling with replacement in a framework. We establish the connection between these expected smoothness parameters and ESO \cite{PCDM}, which allows us the explore the sparsity of data as well.

\section{Strongly Convex Case}


In this section, we develop loopless SVRG and loopless Katyusha for problem (\ref{primal}).  Throughout this section we make the following 
assumption on the functions $f$ and $\psi$.
\begin{assumption}\label{ass:strongconvex}
There is $L_f>0$ such that $\tfrac{L_f}{2}\|x\|^2-f(x)$ is convex.
There are $\mu_f\geq 0$ and $\mu_\psi\geq 0$ such that $f(x)-\tfrac{\mu_f}{2}\|x\|^2$ and $\psi(x)-\tfrac{\mu_\psi}{2}\|x\|^2$ are convex. Moreover, $\mu=\mu_f+\mu_\psi>0$. 
\end{assumption}
 It should be noticed that the results in this section do not require the convexity of  each $f_i$. Instead, we provide convergence guarantees under some expected smoothness assumptions. 
We shall also need the following standard proximal operator of $\psi$:
$$ \compactify 
\prox_{\eta}(x) \eqdef {\arg\min}_{y} \left\{  \frac{1}{2\eta} \|x-y\|^2 + \psi(y) \right\}, \ \eta>0. 
$$

\subsection{Loopless SVRG (L-SVRG)}
The loopless SVRG with arbitrary sampling is described in Algorithm~\ref{alg:lsvrg}.  The convergence will rely on the following assumption.
\begin{assumption}
	[Expected smoothness]\label{as:expsmooth}
	There is a constant ${\cal L}_1> 0$ such that  for any $x\in \R^d$,
	$$
	\mathbb{E}\left[\left\|\tfrac{1}{n}({\bf G}(x) - {\bf G}(x^*)){\theta}_{S}{\bf I}_{S}e \right\|^2 \right] \leq 2{\cal L}_1(f(x) - f(x^*) - \langle \nabla f(x^*), x - x^*\rangle ). 
	$$
	
\end{assumption}

\begin{algorithm}[h]
	\caption{Loopless SVRG (L-SVRG)}
	\label{alg:lsvrg}
	\begin{algorithmic}[1]
		\Require
		stepsize $\eta >0$, probability $p \in (0, 1]$, sampling or sampling with replacement $S$
		\Ensure
		$x^0 = w^0 \in \R^d$
		\For{ $k = 0, 1, 2, ...$}
		\State Sample $S_k \sim S$
		\State $g^k = \tfrac{1}{n}({\bf G}(x^k) - {\bf G}(w^k)){\theta}_{S_k}{\bf I}_{S_k}e + \tfrac{1}{n}{\bf G}(w^k)e$ 
		\State $x^{k+1} = \prox_{\eta} \left(x^k - \eta g^k \right) $
		\State $
		w^{k+1} = \left\{ \begin{array}{rl}
		x^k & \mbox{ with propobility $p$} \\
		w^k &\mbox{ with probability $1-p$}
		\end{array} \right.
		$
		\EndFor
	\end{algorithmic}
\end{algorithm}

\noindent Denote $\mathbb{E}_k[\cdot]$ as the conditional expectation on ${\bf G}(w^k)$ and $x^k$. Consider the stochastic Lyapunov function  $
	\Psi^k_S \eqdef \|x^k -x^*\|^2 + {\cal D}^k_S
	$, where
$
{\cal D}^k_S \eqdef \tfrac{1}{1 + \eta \mu_{\psi}} \cdot \tfrac{4\eta^2}{p} \cdot \left\| \tfrac{1}{n}({\bf G}(w^k) - {\bf G}(x^*)){\theta}_{S}{\bf I}_{S}e \right\|^2 .
$

\begin{theorem}\label{Th:convp}
	Under Assumption~\ref{as:expsmooth},
	if stepsize $\eta$ satisfies $
	\eta \leq \tfrac{1}{6{\cal L}_1}$,
	then 
	$$\compactify 
	\mathbb{E}_k\left[ \|x^{k+1} - x^*\|^2 + {\cal D}^{k+1}_S \right]  \leq \left(1- \frac{\eta \mu}{1 + \eta\mu_{\psi}}\right)\|x^k-x^*\|^2 + \left(1-\frac{p}{2}\right){\cal D}^k_S.
	$$
	If we choose $\eta = \tfrac{1}{6{\cal L}_1}$, then   $\mathbb{E}[{\Psi}^k_S] \leq \epsilon \cdot \mathbb{E}[{\Psi}^0_S]$ as  long as 
	$
	k\geq {\cal O}\left( \left( \tfrac{1}{p} + \tfrac{{\cal L}_1}{\mu}\right) \log\tfrac{1}{\epsilon}\right) .
	$
\end{theorem}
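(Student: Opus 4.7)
My plan is to obtain two one-step recursions---one for $\|x^{k+1}-x^*\|^2$ and one for $\cD^{k+1}_S$---and combine them into the joint Lyapunov contraction.

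First I would apply the first-order optimality condition of the prox step: with $\xi := (x^k - \eta g^k - x^{k+1})/\eta \in \partial\psi(x^{k+1})$, testing $\mu_\psi$-strong convexity of $\psi$ at $x^*$ and using the polarisation identity on $\langle x^k-x^{k+1},\, x^*-x^{k+1}\rangle$ yields the three-point inequality
\[ (1+\eta\mu_\psi)\|x^{k+1}-x^*\|^2 \leq \|x^k-x^*\|^2 - \|x^{k+1}-x^k\|^2 + 2\eta\langle g^k,\, x^*-x^{k+1}\rangle - 2\eta(\psi(x^{k+1})-\psi(x^*)). \]
Strong convexity of $\psi$ is used exactly once here; this is why the final rate only pays one factor of $1+\eta\mu_\psi$. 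Using the optimality condition at $x^*$, namely $-\nabla f(x^*)=\xi^*\in\partial\psi(x^*)$, I would rewrite $\langle g^k,\, x^*-x^{k+1}\rangle = \langle g^k-\nabla f(x^*),\, x^*-x^{k+1}\rangle + \langle \xi^*,\, x^{k+1}-x^*\rangle$ and bound $\langle\xi^*, x^{k+1}-x^*\rangle \leq \psi(x^{k+1})-\psi(x^*)$ by convexity of $\psi$, which exactly cancels the $\psi$-difference term.

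Splitting the residual as $\langle g^k-\nabla f(x^*),\, x^*-x^k\rangle + \langle g^k-\nabla f(x^*),\, x^k-x^{k+1}\rangle$, taking $\mathbb{E}_k[\cdot]$, and applying the unit-coefficient Young inequality $2\eta\langle a,b\rangle \leq \|b\|^2+\eta^2\|a\|^2$ to the second piece cancels $-\|x^{k+1}-x^k\|^2$; the first piece becomes $\langle \nabla f(x^k)-\nabla f(x^*),\, x^*-x^k\rangle \leq -D_f(x^k,x^*)-\tfrac{\mu_f}{2}\|x^k-x^*\|^2$ via convexity plus $\mu_f$-strong convexity of $f$, where $D_f(x^k,x^*) := f(x^k)-f(x^*)-\langle \nabla f(x^*), x^k-x^*\rangle$. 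What remains is a variance $\eta^2\mathbb{E}_k\|g^k-\nabla f(x^*)\|^2$, which I would bound by decomposing $g^k-\nabla f(x^*)$ into an $x^k$-based centered piece (whose second moment is $\leq 2\cL_1 D_f(x^k,x^*)$ by Assumption~\ref{as:expsmooth}) and a zero-mean $w^k$-based piece whose variance is dominated by the raw second moment $\mathbb{E}_{S}\|n^{-1}(\bG(w^k)-\bG(x^*))\theta_S\bI_S e\|^2 = \tfrac{p(1+\eta\mu_\psi)}{4\eta^2}\cD^k_S$. This delivers a descent inequality involving $\|x^k-x^*\|^2$, $D_f(x^k,x^*)$, and $\cD^k_S$.

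For $\cD^{k+1}_S$ itself, I would condition on the coin flip that defines $w^{k+1}$: with probability $p$ the reference point is refreshed to $x^k$ (to which Assumption~\ref{as:expsmooth} applies, contributing $2\cL_1 D_f(x^k,x^*)$), and with probability $1-p$ it stays at $w^k$, which reproduces $\cD^k_S$ with weight $1-p$. This gives $\mathbb{E}_k\cD^{k+1}_S \leq \tfrac{8\eta^2\cL_1}{1+\eta\mu_\psi}D_f(x^k,x^*) + (1-p)\cD^k_S$. Multiplying this by $1+\eta\mu_\psi$ and adding to the descent inequality, the $D_f(x^k,x^*)$ contributions combine into a coefficient of the form $12\eta^2\cL_1-2\eta$, which is nonpositive exactly when $\eta\leq 1/(6\cL_1)$; dropping this nonpositive term and dividing through by $1+\eta\mu_\psi$ produces the stated one-step bound with factors $1-\eta\mu/(1+\eta\mu_\psi)$ and $1-p/2$. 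Iterating and taking the slower of the two rates then gives the stated complexity. The main obstacle I anticipate is the book-keeping: $1+\eta\mu_\psi$ appears both as the prox contraction factor and implicitly inside $\cD^k_S$, and one has to arrange constants so that exactly one such factor divides out at the end, producing the precise rate $(1-\eta\mu_f)/(1+\eta\mu_\psi)$ rather than an off-by-one factor in $\mu_\psi$.
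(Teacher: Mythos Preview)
Your proposal is correct and arrives at exactly the same one-step recursion with the same constants. The only real difference from the paper's argument is in how the prox step is handled. The paper invokes the contraction property of the proximal operator of a $\mu_\psi$-strongly convex function in one line,
\[
\mathbb{E}_k\|x^{k+1}-x^*\|^2=\mathbb{E}_k\|\prox_\eta(x^k-\eta g^k)-\prox_\eta(x^*-\eta\nabla f(x^*))\|^2\leq \tfrac{1}{1+\eta\mu_\psi}\mathbb{E}_k\|x^k-x^*-\eta(g^k-\nabla f(x^*))\|^2,
\]
and then simply expands the square, so the $\psi$-terms never appear. You instead derive the three-point inequality from the optimality condition of the prox, introduce $\psi(x^{k+1})-\psi(x^*)$, and cancel it using $-\nabla f(x^*)\in\partial\psi(x^*)$; after your Young-inequality step the two routes merge into the identical bound $(1+\eta\mu_\psi)\mathbb{E}_k\|x^{k+1}-x^*\|^2\leq(1-\eta\mu_f)\|x^k-x^*\|^2-2\eta D_f+\eta^2\mathbb{E}_k\|g^k-\nabla f(x^*)\|^2$. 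The remaining ingredients---the variance bound $\mathbb{E}_k\|g^k-\nabla f(x^*)\|^2\leq 4\cL_1 D_f+\tfrac{p(1+\eta\mu_\psi)}{2\eta^2}\cD^k_S$ and the coin-flip recursion for $\cD^{k+1}_S$---are exactly the paper's Lemmas~\ref{lm:gk2} and~\ref{lm:Dk+1}. The paper's route is a bit shorter; yours is more elementary in that it does not quote the prox contraction property as a black box.
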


If the sampling $S$ has expected size $\tau$, then the expected iteration cost is $O(\tau+np)$ and 
the expected batch complexity is
$
{\cal O} \left(\left(n+ \tfrac{\tau}{p}+\tfrac{{\cal L}_1 (\tau+np)}{\mu}\right) \log\tfrac{1}{\epsilon}\right),
$
which is $\cO \left(\left(n+\tfrac{{\cal L}_1 \tau}{\mu}\right) \log\tfrac{1}{\epsilon}\right)
$ for any $p$ between $\tau/n$ and $\mu/\cL_1$.  
In the serial and uniform sampling case, i.e., when $\tau=1$ and $p_i=1/n$, 
Algorithm~\ref{alg:lsvrg} and Thm~\ref{Th:convp}  recovers the loopless SVRG algorithm and convergence result given in~\cite{LSVRG}, where  can be found a detailed comparison with the original SVRG method.

\subsection{Loopless Katyusha (L-Katyusha)}
The loopless Katyusha is given in Algorithm~\ref{alg:lkatyusha}.  We shall need the following assumption.

\begin{assumption}\label{as:expL2}
	There is a constant ${\cal L}_2> 0$ such that  for all $x, y\in \R^d$,
	$$
	\mathbb{E}\left[\left\|\tfrac{1}{n}({\bf G}(x) - {\bf G}(y)){\theta}_{S}{\bf I}_{S}e - \tfrac{1}{n}({\bf G}(x) - {\bf G}(y))e\right\|^2 \right] \leq 2{\cal L}_2(f(x) - f(y) - \langle \nabla f(y), x - y\rangle ). 
	$$
\end{assumption}

\begin{algorithm}[h]
	\caption{Loopless Katyusha (L-Katyusha)}
	\label{alg:lkatyusha}
	\begin{algorithmic}[1]
		\Require
		$\mu_f$, $\mu_{\psi}$, ${\cal L}_2$, $L_f$, probability $p \in (0, 1]$, sampling or sampling with replacement $S$
		\Ensure
		$y^0 = z^0 = w^0 \in \R^d$, $\eta = \tfrac{1}{3\theta_1}$, $\sigma_1 = \tfrac{\mu_f}{L}$, $\sigma_2 = \tfrac{\mu_{\psi}}{L}$, $L = \max\{ {\cal L}_2, L_f \}$, $\mu = \mu_f + \mu_{\psi}$, $\sigma = \tfrac{\mu}{L}$, 
		$\theta_2 = \tfrac{{\cal L}_2}{2L} = \min\{ \tfrac{{\cal L}_2}{2L_f}, \tfrac{1}{2} \}$. If $L_f \leq \tfrac{{\cal L}_2}{p}$, $\theta_1 = \min\{ \sqrt{\tfrac{\mu}{{\cal L}_2p}}\theta_2, \theta_2  \}$, else $\theta_1 = \min\{ \sqrt{\tfrac{\mu}{L_f}}, \tfrac{p}{2}  \}$. 
		\For{ $k = 0, 1, 2, ...$}
		\State $x^k = \theta_1 z^k + \theta_2 w^k + (1-\theta_1 -\theta_2)y^k$
		\State Sample $S_k \sim S$
		\State $g^k = \tfrac{1}{n}({\bf G}(x^k) - {\bf G}(w^k)){\theta}_{S_k}{\bf I}_{S_k}e + \tfrac{1}{n}{\bf G}(w^k)e$ 
		\State $z^{k+1} = \prox_{\tfrac{\eta}{(1+ \eta\sigma_1)L}} \left( \tfrac{1}{1+ \eta \sigma_1}(\eta \sigma_1 x^k + z^k - \tfrac{\eta}{L}g^k )  \right). $
		\State $y^{k+1} = x^k + \theta_1 (z^{k+1} - z^k)$
		\State $
		w^{k+1} = \left\{ \begin{array}{rl}
		x^k & \mbox{ with propobility $p$} \\
		w^k &\mbox{ with probability $1-p$}
		\end{array} \right.
		$
		\EndFor
	\end{algorithmic}
\end{algorithm}

We define the Lyapunov function $
\Psi^k \eqdef {\cal Z}^k + {\cal Y}^k + {\cal W}^k,
$
where 
$$\compactify 
{\cal Z}^k = \frac{L(1+\eta \sigma)}{2\eta} \|z^k - x^*\|^2, ~\ {\cal Y}^k = \frac{1}{\theta_1} (P(y^k) - P(x^*)),~
{\cal W}^k = \frac{\theta_2}{pq\theta_1} (P(w^k) - P(x^*)),
$$
for some $0< q < 1$. It should be noticed that the definitions of ${\cal Z}^k$ is the same as that of \cite{LSVRG}, but ${\cal Y}^k$ and ${\cal W}^k$ are different. 

\vskip 2mm

\begin{theorem}\label{Th:LKa}
	Under Assumption~\ref{as:expL2}, 
	$$\compactify 
	\mathbb{E}_k[{\cal Z}^{k+1} + {\cal Y}^{k+1} + {\cal W}^{k+1}] \leq \frac{1}{1+\eta\sigma}{\cal Z}^k + \left(1- \left(\theta_1 + \theta_2 - \frac{\theta_2}{q} \right)\right){\cal Y}^k + \left(1 - p(1-q)\right){\cal W}^k.
	$$
	Moreover, with some $q \in [ \tfrac{2}{3}, 1  )$,  $\mathbb{E}[\Psi^k] \leq \epsilon \Psi^0$  for 
	$
	k \geq  {\cal O}\left( \left( \tfrac{1}{p} + \sqrt{\tfrac{L_f}{\mu}} + \sqrt{\tfrac{{\cal L}_2}{\mu p}}  \right) \log\tfrac{1}{\epsilon}\right).
	$
	\end{theorem}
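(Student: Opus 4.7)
The plan is to follow the Katyusha-style analysis \cite{Katyusha}, but replace the deterministic epoch-boundary reset of the reference point by the Bernoulli update of $w^k$ used in the loopless algorithm. This replacement motivates the auxiliary scalar $q$ in the definition of $\mathcal{W}^k$, which allocates a small slack between the $\mathcal{Y}$ and $\mathcal{W}$ blocks. The starting ingredient is the variance bound
$$\mathbb{E}_k\|g^k-\nabla f(x^k)\|^2 \leq 2\mathcal{L}_2\,D_f(x^k,w^k),\qquad D_f(x,y) \eqdef f(x)-f(y)-\langle \nabla f(y),x-y\rangle,$$
which is immediate from Assumption~\ref{as:expL2} applied at $y=w^k$, together with the unbiasedness $\mathbb{E}_k[g^k]=\nabla f(x^k)$ guaranteed by Assumption~\ref{as:SthetaS}.

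Next I would carry out the standard Nesterov analysis of one $z$-step. Since $z^{k+1}$ solves an $(L(1+\eta\sigma_1)/\eta)$-strongly-convex proximal subproblem, first-order optimality gives the three-point inequality
$$\langle g^k, z^{k+1}-u\rangle + \psi(z^{k+1}) - \psi(u) \leq \tfrac{L\sigma_1}{2}\bigl(\|u-x^k\|^2 - \|z^{k+1}-x^k\|^2\bigr) + \tfrac{L}{2\eta}\bigl(\|u-z^k\|^2 - \|z^{k+1}-z^k\|^2\bigr) - \tfrac{L(1+\eta\sigma_1)}{2\eta}\|u-z^{k+1}\|^2$$
for every $u\in\mathbb{R}^d$. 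Independently, $L_f$-smoothness of $f$ and the identity $y^{k+1}-x^k=\theta_1(z^{k+1}-z^k)$ yield $f(y^{k+1}) \leq f(x^k) + \theta_1\langle\nabla f(x^k),z^{k+1}-z^k\rangle + (L_f\theta_1^2/2)\|z^{k+1}-z^k\|^2$. Taking conditional expectation, swapping $\nabla f(x^k)$ for $g^k$ via Young's inequality balanced by $\theta_2 L$, and invoking the variance bound introduces an additive residue proportional to $\theta_2 D_f(x^k,w^k)$ plus extra $\|z^{k+1}-z^k\|^2$ mass; the calibrations $\eta=1/(3\theta_1)$ and $\theta_2=\mathcal{L}_2/(2L)$ are precisely chosen so that this total $\|z^{k+1}-z^k\|^2$ coefficient is absorbed by the $L/(2\eta)=3L\theta_1/2$ term from the prox inequality.

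Applying the prox inequality at $u=x^*$, combining with the smoothness/Young display, and using convexity of $f$ along the combination $x^k=\theta_1 z^k+\theta_2 w^k+(1-\theta_1-\theta_2)y^k$ (which generates a $-\theta_2 D_f(x^k,w^k)$ term that cancels the variance residue) as well as convexity of $\psi$ along $y^{k+1}=\theta_1 z^{k+1}+\theta_2 w^k+(1-\theta_1-\theta_2)y^k$, yields after rearrangement
$$\mathbb{E}_k\bigl[\mathcal{Z}^{k+1}+\mathcal{Y}^{k+1}\bigr] \leq \tfrac{1}{1+\eta\sigma}\mathcal{Z}^k + (1-\theta_1-\theta_2)\mathcal{Y}^k + \tfrac{\theta_2}{\theta_1}\bigl(P(w^k)-P(x^*)\bigr).$$
The Bernoulli rule $w^{k+1}=x^k$ with probability $p$ gives $\mathbb{E}_k[P(w^{k+1})-P(x^*)] = (1-p)(P(w^k)-P(x^*)) + p(P(x^k)-P(x^*))$; splitting the $P(x^k)-P(x^*)$ mass into a $(1-q)$ fraction that produces the decay $1-p(1-q)$ on $\mathcal{W}^k$ and a $q$ fraction that is bounded via convexity of $P$ against the already-controlled combination of $P(z^k),P(w^k),P(y^k)$ relative to $P(x^*)$ yields the claimed three-line contraction. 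The factor $1/q$ on the $\mathcal{Y}$-block is precisely the price paid for this decoupling.

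For the rate, with $q\in[2/3,1)$ the three contraction factors are $\eta\sigma/(1+\eta\sigma)=\Theta(\mu/(L\theta_1))$, $\theta_1+\theta_2-\theta_2/q=\Theta(\theta_1)$ (positive and of order $\theta_1$ in both branches of the schedule for $\theta_1$, verified by direct substitution), and $p(1-q)=\Theta(p)$. A short case analysis of the two branches $L_f\leq\mathcal{L}_2/p$ versus $L_f>\mathcal{L}_2/p$ in Algorithm~\ref{alg:lkatyusha} shows $\min(\eta\sigma,\theta_1,p) = \Omega\bigl(1/(1/p+\sqrt{L_f/\mu}+\sqrt{\mathcal{L}_2/(\mu p)})\bigr)$, which by the standard geometric decay of a Lyapunov recursion delivers the claimed iteration complexity. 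The hard part is the bookkeeping in the third paragraph: the excess $\|z^{k+1}-z^k\|^2$ from $L_f$-smoothness and Young's inequality on the variance must be absorbed into the $L/(2\eta)=3L\theta_1/2$ slack from the prox inequality, and the positive $D_f(x^k,w^k)$ residue from Young's inequality must cancel against the negative Bregman remainder generated by Jensen for $f$ at $x^k$, with both cancellations happening exactly at the calibrated choices $\eta=1/(3\theta_1)$ and $\theta_2=\mathcal{L}_2/(2L)$; verifying $\theta_1-\theta_2/q>0$ in both branches of the $\theta_1$-schedule is the secondary obstacle, but is routine given the explicit formulas.
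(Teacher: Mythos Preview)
Your derivation of the $\mathcal{Z}+\mathcal{Y}$ recursion is essentially the paper's proof: the variance bound from Assumption~\ref{as:expL2}, the prox three-point inequality for $z^{k+1}$ (the paper's Lemma~\ref{lm:couple1}), the $L_f$-smoothness plus Young step (Lemma~\ref{lm:couple2}), and the use of convexity of $f$ at $x^k=\theta_1 z^k+\theta_2 w^k+(1-\theta_1-\theta_2)y^k$ to generate a $-\theta_2 D_f(x^k,w^k)$ that cancels the variance residue, all combine exactly as the paper does to give
\[
\mathbb{E}_k[\mathcal{Z}^{k+1}+\mathcal{Y}^{k+1}]\leq \tfrac{1}{1+\eta\sigma}\mathcal{Z}^k+(1-\theta_1-\theta_2)\mathcal{Y}^k+pq\,\mathcal{W}^k.
\]
The rate discussion via the two branches of the $\theta_1$-schedule also matches the paper's four-case analysis (note the positivity you must verify is $\theta_1+\theta_2-\theta_2/q>0$, not $\theta_1-\theta_2/q>0$).

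The gap is in your treatment of $\mathcal{W}^{k+1}$. You take $w^{k+1}=x^k$ and propose to bound $P(x^k)-P(x^*)$ by convexity of $P$ against the combination $\theta_1 P(z^k)+\theta_2 P(w^k)+(1-\theta_1-\theta_2)P(y^k)$. This introduces a term $P(z^k)-P(x^*)$ that is \emph{not} controlled by the Lyapunov function: $\mathcal{Z}^k$ is the squared distance $\|z^k-x^*\|^2$, not a function-value gap, and since $\psi$ may be nonsmooth there is no way to convert one into the other. Your ``$(1-q)$ fraction that produces the decay $1-p(1-q)$ on $\mathcal{W}^k$'' is also not a valid step as written, since $P(x^k)-P(x^*)$ has no a priori relation to $P(w^k)-P(x^*)$.

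The paper avoids this entirely. Its Lemma~\ref{lm:wk} computes $\mathbb{E}_k[P(w^{k+1})]=(1-p)P(w^k)+pP(y^k)$ (i.e.\ the proof treats the reference-point reset as $w^{k+1}=y^k$, despite the algorithm listing saying $x^k$), which gives directly
\[
\mathbb{E}_k[\mathcal{W}^{k+1}]=(1-p)\mathcal{W}^k+\tfrac{\theta_2}{q}\mathcal{Y}^k.
\]
Adding this one line to the $\mathcal{Z}+\mathcal{Y}$ inequality above produces the theorem's three-term contraction; no $P(z^k)$ term ever appears. You should replace your splitting argument with this direct computation.
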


\begin{remark}\label{rem:LK}
There are two major differences between L-Katyusha (Algorithm~\ref{alg:lkatyusha}) and the original Katyusha 
algorithm~\cite{Katyusha}. 

1. We here consider both arbitrary sampling and sampling with replacement while 
Katyusha~\cite{Katyusha} only considered the second case.  Note however that our Assumption~\ref{as:expL2} allows to easily extend  the original Katyusha~\cite{Katyusha} method into arbitrary sampling scheme as well, by simply replacing everywhere the $ \nicefrac{\bar L}{b}$ in their proof by the constant $\cL_2$. This also yields a direct extension of 
Katyusha when $f_i$ are not necessarily convex.

2.  Our method is loopless and the reference point $w^k$ is set to be $x^{k-1}$ with 
probability $p$. Recall that in the original Katyusha method, the reference point $\tilde x^{s}$ for 
each outer loop $s$ is set to be a weighted average of past iterates of $y^k$. 
Not only this difference brings a simplified algorithm and proof, but also a non-negligible practical
convergence speed up. Indeed, the number of epochs of the two methods are 
essentially the same (see Sec~\ref{sec:eep}), but the computation overhead caused by the calculation of $\tilde x^s$
makes Katyusha slower than our loopless variant, especially in the case when a sparse implementation is needed.
We provide in Appendix~\ref{sec:EI} further details. In Sec~\ref{Sec:num} we show through
 numerical evidence the better convergence speed of our loopless variant, see Fig~\ref{fig8} and Fig~\ref{fig10}.

\end{remark}




\section{L-SVRG in the Non-Strongly Convex Case}

In this section, we consider L-SVRG assuming only $f$  being convex. We assume that the expected smoothness (Assumption~\ref{as:expsmooth}) holds for at least one optimal solution $x^*$.
Consider the Lyapunov function 
$
\Psi^k \eqdef \tfrac{1}{2\eta} \|x^k - x^*\|^2 + \alpha {\cal H}^k_S,
$
where $\alpha = \tfrac{6\eta}{5 p}$, $\beta = \tfrac{5}{6\eta}$, and
$
{\cal H}^k_S \eqdef  \left\| \tfrac{1}{n}({\bf G}(w^k) - {\bf G}(x^*)){\theta}_{S}{\bf I}_{S}e \right\|^2 . 
$

\begin{theorem}\label{Th:gconvex1}
Under Assumption~\ref{as:expsmooth},
 if   $\eta \leq \min\{  \tfrac{1}{8{\cal L}_1}, \tfrac{1}{6L_f}  \}$, then
$$
\compactify \mathbb{E}_k[P(x^{k+1}) - P(x^*)]  - \frac{3}{5} \left(P(x^k) - P(x^*) \right) \leq \Psi^k - \mathbb{E}_k[\Psi^{k+1}] . 
$$
Let $\tilde x^k=(x^0+\dots+x^k)/(k+1)$.
 Assume $L_f \leq {\cal L}_1$. Let $\eta = \tfrac{1}{8{\cal L}_1}$. Then 
$$
\compactify  \mathbb{E}[P(\tilde{x}^k) - P(x^*)] \leq \frac{1}{k+1} \left( 10{\cal L}_1 \|x^0 - x^*\|^2 +  \left(\frac{5}{2} + \frac{3}{4p} \right)\left(P(x^0) - P(x^*)\right) \right). 
$$
This implies that $\mathbb{E}[P(\tilde x^k) - P(x^*)] \leq \epsilon$ as long as 
$
k \geq {\cal O} \left(  \left( {\cal L}_1 + \tfrac{1}{p}\right) \frac{1}{\epsilon}  \right). 
$

\end{theorem}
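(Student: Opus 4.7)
The plan is to establish the one-step descent inequality by combining a standard proximal/smoothness bound for $x^{k+1}$ with a variance decomposition of $g^k$ and a coin-flip recursion for $\mathcal{H}_S^k$, then telescope.

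First I would unfold one proximal step. Using the optimality of $x^{k+1} = \prox_\eta(x^k - \eta g^k)$ together with convexity of $\psi$ yields $\langle g^k, x^{k+1}-x^*\rangle \leq \psi(x^*)-\psi(x^{k+1}) - \tfrac{1}{2\eta}(\|x^{k+1}-x^k\|^2 + \|x^{k+1}-x^*\|^2 - \|x^k-x^*\|^2)$. Combined with $L_f$-smoothness and convexity of $f$, this gives $\mathbb{E}_k[P(x^{k+1})-P(x^*)] \leq \tfrac{1}{2\eta}\|x^k-x^*\|^2 - \tfrac{1}{2\eta}\mathbb{E}_k\|x^{k+1}-x^*\|^2 + \tfrac{1}{2}(\beta + L_f - \tfrac{1}{\eta})\mathbb{E}_k\|x^{k+1}-x^k\|^2 + \tfrac{1}{2\beta}\mathbb{E}_k\|g^k-\nabla f(x^k)\|^2$, where the cross term $\langle g^k-\nabla f(x^k), x^{k+1}-x^k\rangle$ is handled by Young's inequality with parameter $\beta$, and the conditional-mean cross term $\langle g^k-\nabla f(x^k), x^k-x^*\rangle$ vanishes by unbiasedness of $g^k$ (Assumption~\ref{as:SthetaS}).

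Next I would bound the variance of $g^k$. Writing $g^k - \nabla f(x^k) = (A_{S_k}-B_{S_k}) - \mathbb{E}_k[A_{S_k}-B_{S_k}]$ with $A_S = \tfrac{1}{n}(\mathbf{G}(x^k)-\mathbf{G}(x^*))\theta_S\mathbf{I}_S e$ and $B_S$ the analogous expression with $w^k$, the variance is bounded by $\mathbb{E}_k\|A_{S_k}-B_{S_k}\|^2 \leq 2\mathbb{E}_k\|A_{S_k}\|^2 + 2\mathbb{E}_k\|B_{S_k}\|^2 \leq 4\mathcal{L}_1 D^k + 2\mathcal{H}_S^k$, where $D^k \eqdef f(x^k)-f(x^*)-\langle\nabla f(x^*), x^k-x^*\rangle$ and I invoke Assumption~\ref{as:expsmooth} for the $A$-term. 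A short conditioning argument on the coin flip then gives $\mathbb{E}_k[\mathcal{H}_S^{k+1}] \leq 2p\mathcal{L}_1 D^k + (1-p)\mathcal{H}_S^k$. Using that $-\nabla f(x^*) \in \partial\psi(x^*)$ yields $D^k \leq P(x^k)-P(x^*)$.

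Now I would substitute $\alpha=6\eta/(5p)$ and $\beta=5/(6\eta)$. With $\eta\leq 1/(6L_f)$, the coefficient of $\|x^{k+1}-x^k\|^2$ becomes nonpositive. The $\mathcal{H}_S^k$ terms cancel: $1/\beta - \alpha p = 0$. The coefficient in front of $D^k$ becomes $2\mathcal{L}_1/\beta + 2\alpha p\mathcal{L}_1 = 24\eta\mathcal{L}_1/5$, which is bounded by $3/5$ when $\eta\leq 1/(8\mathcal{L}_1)$. This establishes the first inequality of the theorem. The bookkeeping around ensuring \emph{both} the $\|x^{k+1}-x^k\|^2$ term absorbs into the descent and the $\mathcal{H}$ terms cancel exactly is the main place where the tuning of $\alpha$, $\beta$ must be done carefully, so I'd verify this step by step.

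Finally, for the averaged-iterate rate, I would take total expectation and telescope the one-step inequality from $k=0$ to $K$. Grouping, the $\sum_{k=1}^K(\mathbb{E}[P(x^k)]-P(x^*))$ terms combine with coefficient $2/5$, yielding $\sum_{k=1}^K(\mathbb{E}[P(x^k)]-P(x^*)) \leq \tfrac{5}{2}\mathbb{E}[\Psi^0] + \tfrac{3}{2}(P(x^0)-P(x^*))$. Jensen's inequality on $P$ gives $\mathbb{E}[P(\tilde x^K)-P(x^*)] \leq \tfrac{1}{K+1}\sum_{k=0}^K (\mathbb{E}[P(x^k)]-P(x^*))$; after bounding $\mathcal{H}^0$ in $\Psi^0$ by $2\mathcal{L}_1(P(x^0)-P(x^*))$ via Assumption~\ref{as:expsmooth} (with $w^0=x^0$) and substituting $\eta = 1/(8\mathcal{L}_1)$ so $1/(2\eta)=4\mathcal{L}_1$, the constants $10\mathcal{L}_1$ and $5/2 + 3/(4p)$ fall out exactly.
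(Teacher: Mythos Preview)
Your proposal is correct and follows essentially the same approach as the paper: the paper proves the one-step inequality via exactly the same ingredients (the proximal three-point inequality for $x^{k+1}$, $L_f$-smoothness plus convexity of $f$, Young's inequality with parameter $\beta=\tfrac{5}{6\eta}$ on the cross term, the variance bound $\mathbb{E}_k\|g^k-\nabla f(x^k)\|^2\le 4\mathcal{L}_1 D^k+2\mathcal{H}_S^k$, and the coin-flip recursion $\mathbb{E}_k[\mathcal{H}_S^{k+1}]\le(1-p)\mathcal{H}_S^k+2p\mathcal{L}_1 D^k$), then uses $D^k\le P(x^k)-P(x^*)$, telescopes, and applies Jensen. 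Your arithmetic on the cancellation $1/\beta=\alpha p$, the $D^k$-coefficient $4\mathcal{L}_1/\beta\le 3/5$, and the final constants $10\mathcal{L}_1$ and $5/2+3/(4p)$ all match the paper's computation.
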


Thm~\ref{Th:gconvex1} can be compared with Thm 3 in~\cite{VRSGD}. However, note that 
the reference point in our loopless SVRG is simply chosen to be $x^k$ without the need 
of extra averaging step. As discussed in Remark~\ref{rem:LK}, the loopless variant has both 
simpler implementation and faster  convergence speed.

\section{L-SVRG in the Nonconvex and Smooth Case }


In this section, we consider L-SVRG with $\psi \equiv 0$ and $f$ being possibly nonconvex. 

\begin{assumption}\label{as:expL3}
There is a constant ${\cal L}_3> 0$ such that 
$$
\compactify 
\mathbb{E}\left[\left\|\frac{1}{n}({\bf G}(x) - {\bf G}(y)){\theta}_{S}{\bf I}_{S}e - \frac{1}{n}({\bf G}(x) - {\bf G}(y))e\right\|^2 \right] \leq {\cal L}_3 \|x-y\|^2, \ \ \forall x, y\in \R^d. 
$$
\end{assumption}

\begin{theorem}\label{Th:nonconvex1} Consider the Lyapunov function
$
\Psi^k \eqdef f(x^k) + \alpha \|x^k - w^k\|^2, 
$
where $\alpha = 3\eta^2L_f{\cal L}_3/p$. Let $\beta = p/3\eta$. If stepsize $\eta$ satisfies 
\begin{equation}\label{eq:etanonconvex}
\compactify  \eta \leq \min \left\{ \frac{1}{4L_f}, \frac{p^{\frac{2}{3}} }{36^{\frac{1}{3}}(L_f{\cal L}_3)^{\frac{1}{3}} }, \frac{\sqrt{p}}{\sqrt{6{\cal L}_3}}  \right\}, 
\end{equation}
then 
$
\mathbb{E}_k [\Psi^{k+1}] \leq \Psi^k - \tfrac{\eta}{4}\|\nabla f(x^k) \|^2. 
$
\end{theorem}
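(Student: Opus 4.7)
The plan is to derive a descent inequality for the Lyapunov function by separately bounding $\mathbb{E}_k[f(x^{k+1})]$ and $\alpha\mathbb{E}_k[\|x^{k+1}-w^{k+1}\|^2]$, then combining them and using Assumption~\ref{as:expL3} to convert the ``$\mathbb{E}_k\|g^k\|^2$'' terms into quantities measured in $\|\nabla f(x^k)\|^2$ and $\|x^k-w^k\|^2$.

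First I would observe that $\mathbb{E}_k[g^k]=\nabla f(x^k)$ (a consequence of Assumption~\ref{as:SthetaS}) and apply the descent lemma for the $L_f$-smooth function $f$ to the update $x^{k+1}=x^k-\eta g^k$, yielding
\[
\mathbb{E}_k[f(x^{k+1})] \le f(x^k) - \eta\|\nabla f(x^k)\|^2 + \tfrac{L_f\eta^2}{2}\mathbb{E}_k[\|g^k\|^2].
\]
Next, using the variance decomposition $\mathbb{E}_k[\|g^k\|^2]=\|\nabla f(x^k)\|^2+\mathbb{E}_k[\|g^k-\nabla f(x^k)\|^2]$ together with Assumption~\ref{as:expL3} applied at $(x,y)=(x^k,w^k)$, I get $\mathbb{E}_k[\|g^k\|^2]\le \|\nabla f(x^k)\|^2+\cL_3\|x^k-w^k\|^2$.

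The main step is to control $\mathbb{E}_k[\|x^{k+1}-w^{k+1}\|^2]$. I would split on the coin flip: with probability $p$, $\|x^{k+1}-w^{k+1}\|^2=\eta^2\|g^k\|^2$; with probability $1-p$, it equals $\|x^k-w^k-\eta g^k\|^2$. Expanding the square in the second case and taking conditional expectation produces a cross term $-2\eta(1-p)\langle\nabla f(x^k), x^k-w^k\rangle$, which I would handle via Young's inequality with parameter $\beta=p/(3\eta)$, so that $\eta(1-p)\beta\le p/3$ and $\eta(1-p)/\beta\le 3\eta^2/p$. After simplification this gives
\[
\mathbb{E}_k[\|x^{k+1}-w^{k+1}\|^2] \le (1-\tfrac{2p}{3})\|x^k-w^k\|^2 + \tfrac{3\eta^2}{p}\|\nabla f(x^k)\|^2 + \eta^2\mathbb{E}_k[\|g^k\|^2].
\]
The crucial feature is that the $3\eta^2/p$ blow-up multiplies only $\|\nabla f(x^k)\|^2$, not the full $\mathbb{E}_k[\|g^k\|^2]$; this is what makes $\alpha=3\eta^2 L_f\cL_3/p$ (rather than something larger) sufficient.

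Finally I would sum the two bounds with the weight $\alpha$ on the second, substitute $\mathbb{E}_k[\|g^k\|^2]\le \|\nabla f(x^k)\|^2+\cL_3\|x^k-w^k\|^2$, and collect coefficients of $\|\nabla f(x^k)\|^2$ and $\|x^k-w^k\|^2$. Showing the desired descent reduces to verifying the two scalar inequalities
\[
3\alpha\eta^2/p + L_f\eta^2/2 + \alpha\eta^2 \le 3\eta/4,\qquad (L_f\eta^2/2+\alpha\eta^2)\cL_3 \le 2\alpha p/3.
\]
The main obstacle (and the reason for the three-part stepsize bound \eqref{eq:etanonconvex}) is matching the three separate terms in the first inequality to the three stepsize conditions: $\eta\le 1/(4L_f)$ handles $L_f\eta/2\le 1/8$; the cubic bound $\eta\le p^{2/3}/(36 L_f\cL_3)^{1/3}$ handles $9\eta^3 L_f\cL_3/p^2\le 1/4$; and for the remaining $3\eta^3 L_f\cL_3/p$ term I would use $\eta^2\cL_3\le p/6$ from $\eta\le\sqrt{p/(6\cL_3)}$ to reduce it to $L_f\eta/2\le 1/8$. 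The second inequality follows directly from $\eta\le\sqrt{p/(6\cL_3)}$ (it in fact requires only $\eta\le\sqrt{p/(2\cL_3)}$). Adding up the three pieces gives $1/2\le 3/4$, completing the proof.
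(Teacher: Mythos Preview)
Your proof is correct and follows essentially the same route as the paper: descent lemma for $f$, a coin-flip/Young's-inequality bound on $\mathbb{E}_k[\|x^{k+1}-w^{k+1}\|^2]$, and Assumption~\ref{as:expL3} to control $\mathbb{E}_k[\|g^k\|^2]$, followed by the same scalar verifications of the three stepsize conditions. The only (minor) difference is that you use the exact variance identity $\mathbb{E}_k[\|g^k\|^2]=\|\nabla f(x^k)\|^2+\mathbb{E}_k[\|g^k-\nabla f(x^k)\|^2]$, whereas the paper uses the cruder $\|g^k\|^2\le 2\|\nabla f(x^k)\|^2+2\|g^k-\nabla f(x^k)\|^2$; this sharpens intermediate constants but does not change the argument or the final result.
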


\begin{corollary}\label{co:nonconvex1}
Let $x^a$ be chosen uniformly at random from $\{  x^i  \}_{i=0}^k$ and the stepsize $\eta$ satisfy (\ref{eq:etanonconvex}). 
Then 
$
\mathbb{E}[\|\nabla f(x^a)\|^2 ] \leq \tfrac{4}{\eta}\cdot \tfrac{f(x^0) - f(x^*)}{k+1}. 
$ If the stepsize $\eta$ is equal to the upper bound in (\ref{eq:etanonconvex}), then $\mathbb{E}[\|\nabla f(x^a)\|^2 ]  \leq \epsilon$ as long as 
$$
\compactify 
k \geq {\cal O} \left(  \left(L_f + \frac{(L_f{\cal L}_3)^{\frac{1}{3}}}{p^{\frac{2}{3}}} +  \sqrt{\frac{{\cal L}_3}{p}} \right)\frac{f(x^0) - f(x^*)}{\epsilon}  \right). 
$$
\end{corollary}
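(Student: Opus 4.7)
The plan is to derive the corollary as a standard consequence of the one-step descent inequality in Theorem~\ref{Th:nonconvex1} by telescoping and then applying the random-index trick.

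First, I would take the total expectation of the bound $\mathbb{E}_k[\Psi^{k+1}] \leq \Psi^k - \tfrac{\eta}{4}\|\nabla f(x^k)\|^2$ supplied by Theorem~\ref{Th:nonconvex1} and rearrange it to
$$\tfrac{\eta}{4}\mathbb{E}[\|\nabla f(x^i)\|^2] \leq \mathbb{E}[\Psi^i] - \mathbb{E}[\Psi^{i+1}].$$
Summing for $i=0,1,\dots,k$ telescopes the right-hand side to $\Psi^0 - \mathbb{E}[\Psi^{k+1}]$. Since the algorithm is initialized with $x^0=w^0$, the penalty term vanishes at $k=0$ and $\Psi^0 = f(x^0)$. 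Since $\alpha\geq 0$, we have the lower bound $\Psi^{k+1} \geq f(x^{k+1}) \geq f(x^*)$, so the telescoped sum is at most $f(x^0)-f(x^*)$.

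Dividing by $\tfrac{\eta(k+1)}{4}$ and observing that, by the definition of $x^a$ as a uniform random pick from $\{x^0,\dots,x^k\}$,
$$\mathbb{E}[\|\nabla f(x^a)\|^2] = \frac{1}{k+1}\sum_{i=0}^{k}\mathbb{E}[\|\nabla f(x^i)\|^2],$$
gives immediately
$$\mathbb{E}[\|\nabla f(x^a)\|^2] \leq \frac{4}{\eta}\cdot\frac{f(x^0)-f(x^*)}{k+1},$$
which is the first claim of the corollary.

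For the complexity claim, I would substitute for $\eta$ the minimum in (\ref{eq:etanonconvex}), so that $1/\eta$ equals the corresponding maximum of three terms, namely
$$\tfrac{1}{\eta} = \max\Bigl\{4L_f,\; 36^{1/3}(L_f\mathcal{L}_3)^{1/3}/p^{2/3},\; \sqrt{6\mathcal{L}_3/p}\Bigr\} \leq \mathcal{O}\!\Bigl(L_f + (L_f\mathcal{L}_3)^{1/3}/p^{2/3} + \sqrt{\mathcal{L}_3/p}\Bigr),$$
using that the maximum of nonnegative quantities is bounded by their sum. Requiring the right-hand side of the previous inequality to be at most $\epsilon$ and solving for $k+1$ then yields the stated bound. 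Essentially no obstacle arises here; the only point requiring a tiny bit of care is using $x^0=w^0$ to collapse $\Psi^0$ to $f(x^0)$, and using $\alpha\geq 0$ to drop the nonnegative $\|x^{k+1}-w^{k+1}\|^2$ term when lower-bounding $\Psi^{k+1}$.
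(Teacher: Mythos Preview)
Your proposal is correct and matches the paper's own proof essentially step for step: telescope the one-step descent from Theorem~\ref{Th:nonconvex1}, use $x^0=w^0$ to get $\Psi^0=f(x^0)$, drop the nonnegative $\alpha\|x^{k+1}-w^{k+1}\|^2$ term and use $f(x^{k+1})\geq f(x^*)$ to bound $\Psi^{k+1}$, then apply the random-index identity and finally plug in the stepsize.
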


\section{Estimation of Expected Smoothness Parameters}\label{sec:eep}
In this section, we study the constants $\cL_1$,  $\cL_2$  and $\cL_3$ under various circumstances and compare the corresponding iteration complexities of our loopless algorithms. For estimation of $\cL_1$ and $\cL_2$ we require 
the convexity of each $f_i$. We list the upper bounds of the three constants in Table \ref{tab:L1L2L3}. The proofs can be found in Secs \ref{sec:L1L2} and \ref{sec:L3} in Appendix.

{\bf Importance Sampling.}
Let $\tau=\bE[|S|]$ be the expected cardinality of $S$, counting multiplicity.
Since the complexity  bound of the algorithms increase with the constants $\cL_1$,  $\cL_2$  and $\cL_3$.
It is natural to choose the sampling strategy minimizing those constants. 
The detailed analysis can be found in Sec \ref{sec:importance} and Propositions \ref{pro:L3gs}, \ref{pro:L3sp} in Appendix. We summarize the results as following. For group sampling,  let 
$
q_i = {L_i \tau}/(\sum_{i=1}^n L_i), 
$
and choose $p_i$ such that $\min\{q_i, 1\} \leq p_i \leq 1$ and $\sum_{i=1}^np_i = \tau$. Then \begin{align}\label{a:L123}
\compactify \cL_1\leq L_f +\frac{\bar L}{\tau}, \qquad  \cL_2\leq \frac{\bar L}{\tau} ,  \qquad \cL_3\leq \frac{\bar L^2}{\tau}.
\end{align}
For sampling with replacement, by choosing
$\tilde p_i=L_i/(\sum_i L_i)$, the same bound as~\eqref{a:L123} is guaranteed.
Next we insert the bound~\eqref{a:L123} into previous results to get directly the complexity of each algorithm.  Although our theory allows arbitrary changing probability $p$,  for simplicity
we consider $p=\tau/n$. In this case, the expected cost of each iteration is $2\tau$.

{\bf L-SVRG.}
When $f$ or $\psi$ is strongly convex,  by Thm~\ref{Th:convp}, the iteration complexity of L-SVRG is $
{\cal O} \left(\left(\nicefrac{n}{\tau}+\nicefrac{L_f}{\mu}+\nicefrac{\bar L}{\mu\tau}\right) \log\nicefrac{1}{\epsilon}\right)$ with $\mu=\mu_f+\mu_\psi$. Such complexity bound is comparable with that of SAGA-AS with importance mini-batch sampling~\cite{SAGA-AS}. Note that as SAGA-AS, L-SVRG does not need to know the strong convexity parameter $\mu$. For arbitrary sampling, L-SVRG is at least as good as SAGA-AS and Quartz. The detailed comparison can be found in Sec~\ref{sec:importance} in Appendix.

When $f$ is convex,  by Thm~\ref{Th:gconvex1}, the iteration complexity of L-SVRG is ${\cal O} \left(  \left(\nicefrac{n}{\tau}+ L_f + \nicefrac{\bar L}{\tau}\right) \nicefrac{1}{\epsilon}  \right)$.  Therefore, linear speedup is achieved when $\tau\leq \nicefrac{\bar L}{L_f}$. 

When $f$ is {nonconvex $f$ and $\psi\equiv 0$},   by Corollary~\ref{co:nonconvex1}, the iteration complexity of L-SVRG is 
\[ \compactify  {\cal O} \left(  \left(L_f + \frac{n^{\frac{2}{3}}(L_f{\bar L}^2)^{\frac{1}{3}}}{\tau} +  \frac{\sqrt{n}{\bar L}}{\tau} \right)\frac{1}{\epsilon}  \right). \] In~\cite{Samuel19}, the iteration complexity for SVRG and SAGA with importance sampling is proved to be
\[ \compactify  {\cal O}\left(  \frac{(1 + \frac{n-\tau}{n}) {\bar L} n^{\frac{2}{3}}}{\tau \epsilon}  \right)\]
for $\tau \leq {\cal O}(n^{\nicefrac{2}{3}})$. We can see our bound is at least as good as theirs, and could be better  if $L_f$ is smaller than ${\bar L}$. Furthermore, our bound  holds for any $1\leq \tau \leq n$, while the one in~\cite{Samuel19}  only holds for $\tau \leq {\cal O}(n^{\nicefrac{2}{3}})$.

\begin{table*}[t]
	\begin{center}
		{\footnotesize
			\begin{tabular}{|c|c|c|c|}
				\hline
				  \begin{tabular}{c}
				  	\quad \\
				  	\quad\\
				  	\end{tabular} & ${\cal L}_1$ &   ${\cal L}_2$ & ${\cal L}_3$ \\
				\hline 
				AS  & 
				\begin{tabular}{c} 
					$\tfrac{1}{n}\max_{i\in [n]} \left\{ \sum_{j\in [n]} \right.$\\
					$\left.\sum_{C: i, j\in C}p_C{\theta}^i_C{\theta}^j_C L_j   \right\}$
				\end{tabular} 
				& $\leftarrow$  & 
				\begin{tabular}{c}
					$\tfrac{1}{n^2} \sum_{i, j=1}^n \sum_{C: i, j \in C} $\\
					$p_C \theta^i_C\theta^j_C L_iL_j$ 
					\end{tabular}\\
				\hline
				\begin{tabular}{c} 
					AS \\
					$\theta^i_S \equiv \tfrac{1}{p_i}$
				\end{tabular} & 
				\begin{tabular}{c}
				$\tfrac{1}{n}\max_{i\in [n]}$ \\ 
				$\left\{ \sum_{j\in [n]}\tfrac{{\bf P}_{ij}}{p_ip_j}L_j   \right\}$ 
				\end{tabular} & $\leftarrow$ & $ \tfrac{1}{n^2} \sum_{i, j =1}^n \tfrac{{\bf P}_{ij}}{p_ip_j} L_iL_j$    \\ 
				\hline
				\begin{tabular}{c}
				\quad \\
				AS\\ 
				\quad 
				\end{tabular}
				& $\tfrac{1}{n} \max_i \{  L_i \beta_i  \}$   & $\leftarrow$ & $\tfrac{1}{n^2} \sum_{i=1}^n \beta_i L_i^2$ \\ 
				\hline
				\begin{tabular}{c}
				$\tau$-NS\\ 
				$\theta^i_S \equiv \tfrac{1}{p_i}$
				\end{tabular} & 
				\begin{tabular}{c}
				$\tfrac{n(\tau -1)}{\tau (n-1)} L_f + $ \\ 
				$\tfrac{n-\tau}{\tau (n-1)} \max_i \{ L_i  \}$
				\end{tabular} & $\tfrac{n-\tau}{\tau (n-1)} \max_i \{ L_i  \}$ & $ \tfrac{n-\tau}{\tau (n-1)}\cdot \tfrac{1}{n}\sum_{i=1}^n L_i^2$  \\
				\hline 
				\begin{tabular}{c}
					GS \\
					${\theta^i_S \equiv \tfrac{1}{p_i}}$
				\end{tabular}  &
				\begin{tabular}{c}
				$L_f + $ \\ 
				$\tfrac{1}{n} \max\left\{ \max_{i \notin {\cal I}} \tfrac{L_i}{p_i}, \right.$ \\
				$\left.  \max_{i\in {\cal I}} (\tfrac{1}{p_i} - 1)L_i    \right\}$
				\end{tabular} &
				\begin{tabular}{c}
				$\tfrac{1}{n} \max\left\{  \max_{i \notin {\cal I}} \tfrac{L_i}{p_i}, \right.$ \\ 
				$\left.  \max_{i\in {\cal I}} (\tfrac{1}{p_i} - 1)L_i , \right\}$ 
				\end{tabular}& 
				\begin{tabular}{c}
				$\tfrac{1}{n^2} \left(  \sum_{i\in {\cal I}} (\tfrac{1}{p_i} -1)L_i^2 \right.$\\ 
				$\left.  + \sum_{i \notin {\cal I}} \tfrac{1}{p_i} L_i^2  \right)$ 
				\end{tabular}
				\\
				\hline 
				\begin{tabular}{c}
				AS \\
				${\theta^i_S} = \tfrac{1}{p_i}$ \\ 
				$\&$ ESO
				\end{tabular} & 
				$\tfrac{1}{n\gamma} \max_i \{  \tfrac{v_i}{p_i}  \}$ & $\leftarrow$ & 
				$\tfrac{1}{n^2 \gamma^2} \sum_{i=1}^n \tfrac{v_i\|{\bf A}_i\|^2}{p_i}$
				\\
				\hline
				\begin{tabular}{c}
				SR 
				\end{tabular} & 
				\begin{tabular}{c}
				$(1- \tfrac{1}{\tau})L_f + $ \\
				$\tfrac{1}{n\tau} \max_{i} \tfrac{L_i}{{\tilde p}_i}$ 
				\end{tabular} & 
				$ \tfrac{1}{n\tau} \max_{i} \tfrac{L_i}{{\tilde p}_i}$ & 
				$\tfrac{1}{n^2\tau} \sum_{i=1}^n \tfrac{L_i^2}{{\tilde p}_i}$ 
				\\
				\hline
			\end{tabular}
		}
	\end{center} 
	\caption{\footnotesize Upper bounds for ${\cal L}_1$, ${\cal L}_2$, and ${\cal L}_3$ (AS = arbitrary sampling, $\tau$-NS = $\tau$-nice sampling, GS  = group sampling, SR= sampling with replacement). In the column where ${\cal L}_2$ belongs, "$\leftarrow$" means the value is same as that of ${\cal L}_1$. We need $f_i$ to be convex in the estimations of ${\cal L}_1$ and ${\cal L}_2$. We have ${p_i} \eqdef \Prob(i\in S)$ for sampling $S$, and  $\beta_i \eqdef \sum_{C \subseteq[n] : i\in C}p_C|C|(\theta_C^i)^2$, where $|C|$ is the cardinality of the set $C$. The property of $\beta_i$ can be found in Lemma 3.4 in \cite{SAGA-AS}. The definitions of $v_i$ and ESO can be found in (\ref{eq:ESOfirst}) in Appendix.      }
	\label{tab:L1L2L3}
\end{table*}

{\bf L-Katyusha.}
When $f$ or $\psi$ is strongly convex, by Thm~\ref{Th:LKa}, the iteration complexity of L-Katyusha is
$
 {\cal O}(( \nicefrac{n}{\tau} + \sqrt{\nicefrac{L_f}{\mu}} + \nicefrac{1}{\tau}\sqrt{\nicefrac{n \bar L}{\mu}} ) \log\nicefrac{1}{\epsilon}).$  This is  the same iteration complexity bound of the original Katyusha with importance sampling with replacement in \cite{Katyusha}.   The numerical experiments also confirm the similarity of the two methods
 in terms of iteration complexity, see Fig~\ref{fig7} and Fig~\ref{fig9}.

\section{Numerical Experimentation}\label{Sec:num}

We tested L-SVRG (Algorithm~\ref{alg:lsvrg}) and L-Katyusha (Algorithm~\ref{alg:lkatyusha}) on the logistic regression problem with $\lambda_1=10^{-4}$ and different values of $\lambda_2$.
The datasets that we used are all downloaded from https://www.csie.ntu.edu.tw/$\sim$cjlin/libsvmtools/datasets/.  In all the plots, L-SVRG and L-Katyusha refer respectively to Algorithm~\ref{alg:lsvrg} and Algorithm~\ref{alg:lkatyusha} with uniform sampling strategy. 
 L-SVRG IP and L-Katyusha IP mean that importance sampling is used. Katyusha refers to the original Katyusha algorithm proposed in~\cite{Katyusha}. 
 Since in practice group sampling and sampling with replacement have similar convergence behaviour, here we only show the results obtained with sampling with replacement.  In all the plots, the $y$-axis corresponds to the primal dual gap of iterate $\{x^k\}$. The $x$-axis may be the number of epochs,   counted as $k \tau/n$ plus the number of times we change $w^k$, or the actual running time. The experiments were carried out  on a MacBook (1.2 GHz Intel Core m3 with 16 GB RAM) running  MacOS High Sierra 10.13.1.

\textbf{Comparison of L-SVRG and L-Katyusha:}
In Fig~\ref{fig6} and Fig~\ref{fig5} we compare L-SVRG with L-Katyusha, both with importance sampling strategy for w8a and cod\_rna and three different values of $\lambda_2$. In each plot we compare three different minibatch sizes $\tau$. 
The numerical results show that the number of epochs of L-SVRG generally increases with $\tau$ (since ${\bar L}/L_f$ is not large in these examples) while that of L-Katyusha 
is stable and thus achieves a \textbf{linear speedup} in terms of number of epochs.

 \textbf{Comparison of Uniform and Importance Sampling:}
Fig~\ref{1d1}  compares the uniform sampling strategy and the importance sampling strategy, for the dataset cod\_rna and three different values of $\tau$.  As predicted by theory, the importance sampling brings a speedup if $\bar L$ is smaller than $\max_i L_i$. Note that for cod\_rna, $\bar L=259,158$ and $\max_i L_i=3,506,320$.

\textbf{The $w^k$ updating probability:}
Fig~\ref{1d2} and Fig~\ref{1d3} compare the performance of our loopless Katyusha for different choice of $p$. 
Although the total number of epochs does increase with $p\geq \tfrac{\tau}{n}$, the running time can be
significantly reduced by taking $p$ larger than $\tau/n$.

\textbf{Comparison with Katyusha}
Fig~\ref{fig7}, ~\ref{fig8} ~\ref{fig9} and~\ref{fig10}  compare our loopless Katyusha with the original Katyusha proposed in~\cite{Katyusha}, for three different values of $\tau$, based on the importance sampling strategy.
While the performance of the two algorithms are similar in terms of epochs, the actual running time of the loopless variant can be 
20\% to 50\% less than that of Katyusha. This is due to the additional averaging step in the original Katyusha method at the end of every inner loop, see Appendix~\ref{sec:EI} for further details.

 \begin{figure}[!ht]
   \subfigure[$\lambda_2=10^{-3}$]{ \label{5d1}  \includegraphics[scale=0.38]{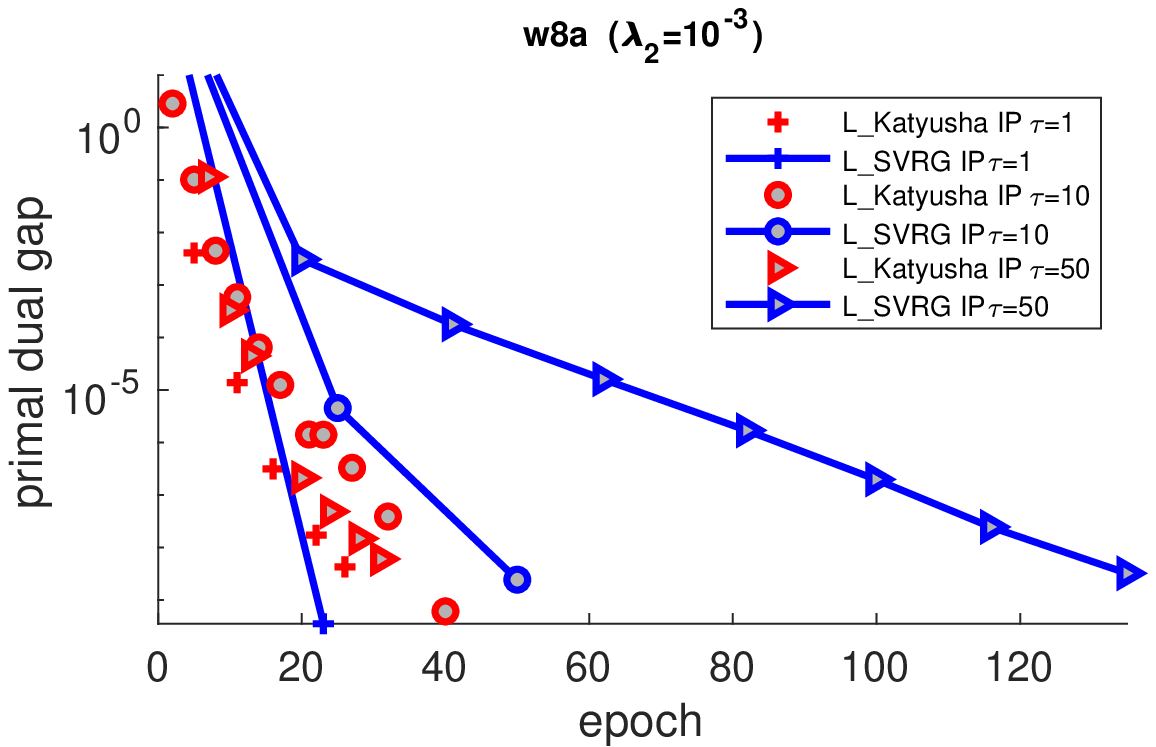}}
  \subfigure[$\lambda_2=10^{-5}$]{\label{5d2}\includegraphics[scale=0.38]{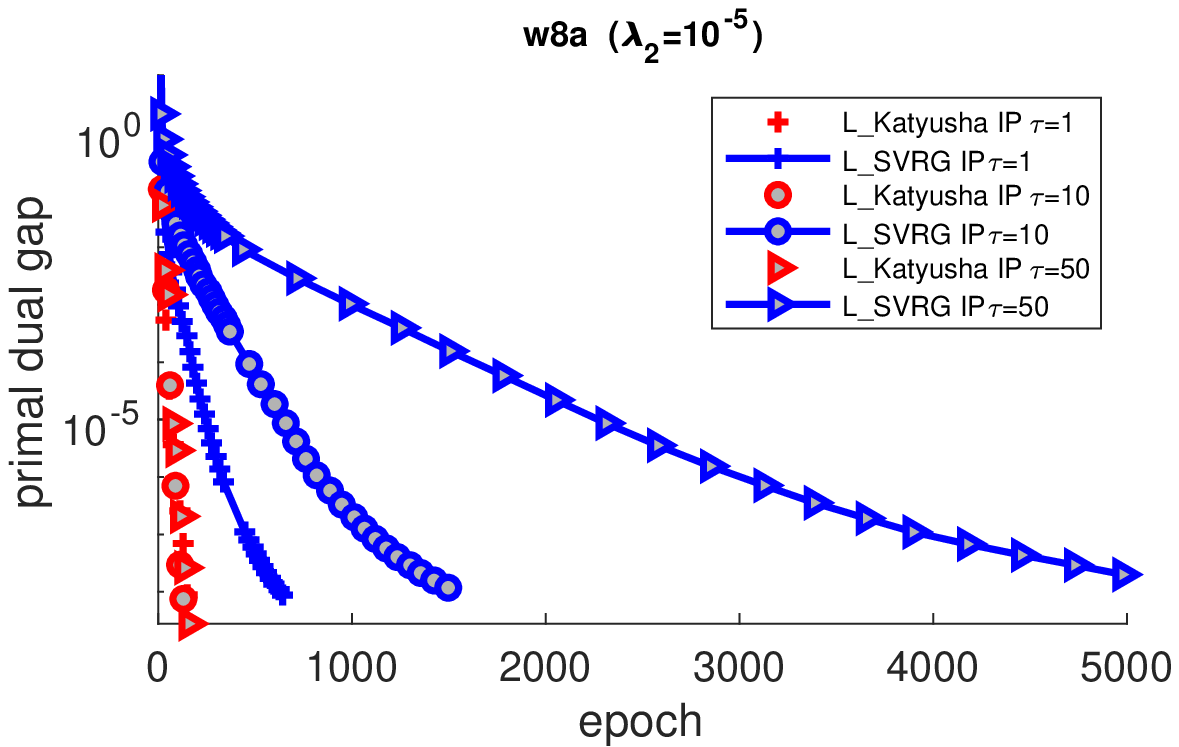}}
  \subfigure[$\lambda_2=10^{-7}$]{\label{5d3}\includegraphics[scale=0.38]{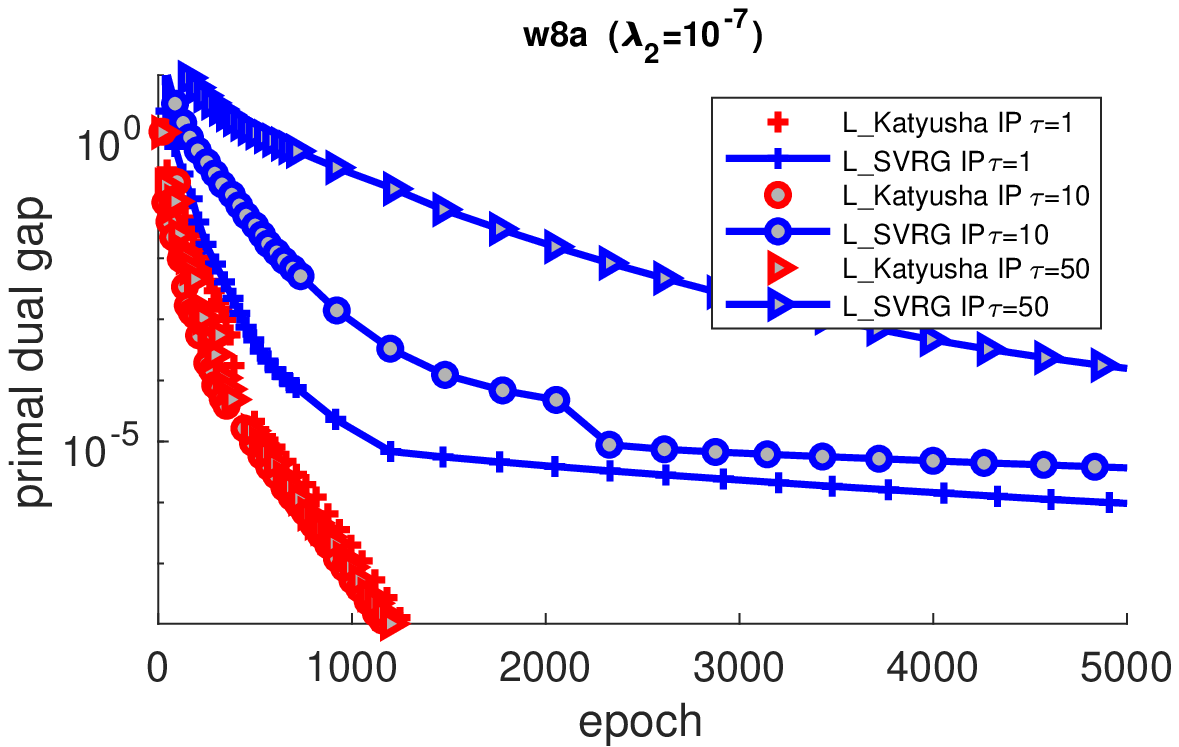}}
\caption{L-SVRG V.S. L-Katyusha, w8a}\label{fig6}
 \end{figure}

\begin{figure}[!ht]\subfigure[uniform V.S. IP]{\label{1d1}
 \includegraphics[scale=0.38]{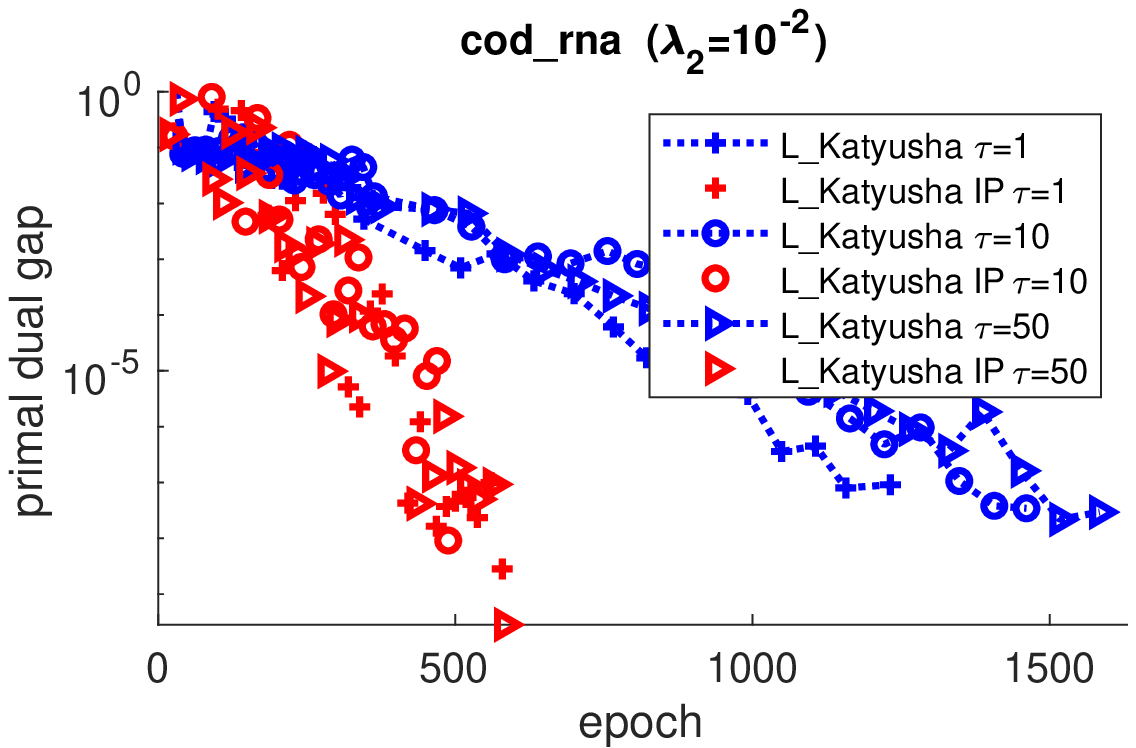}}
    \subfigure[different $p$, epoch]{\label{1d2}  \includegraphics[scale=0.38]{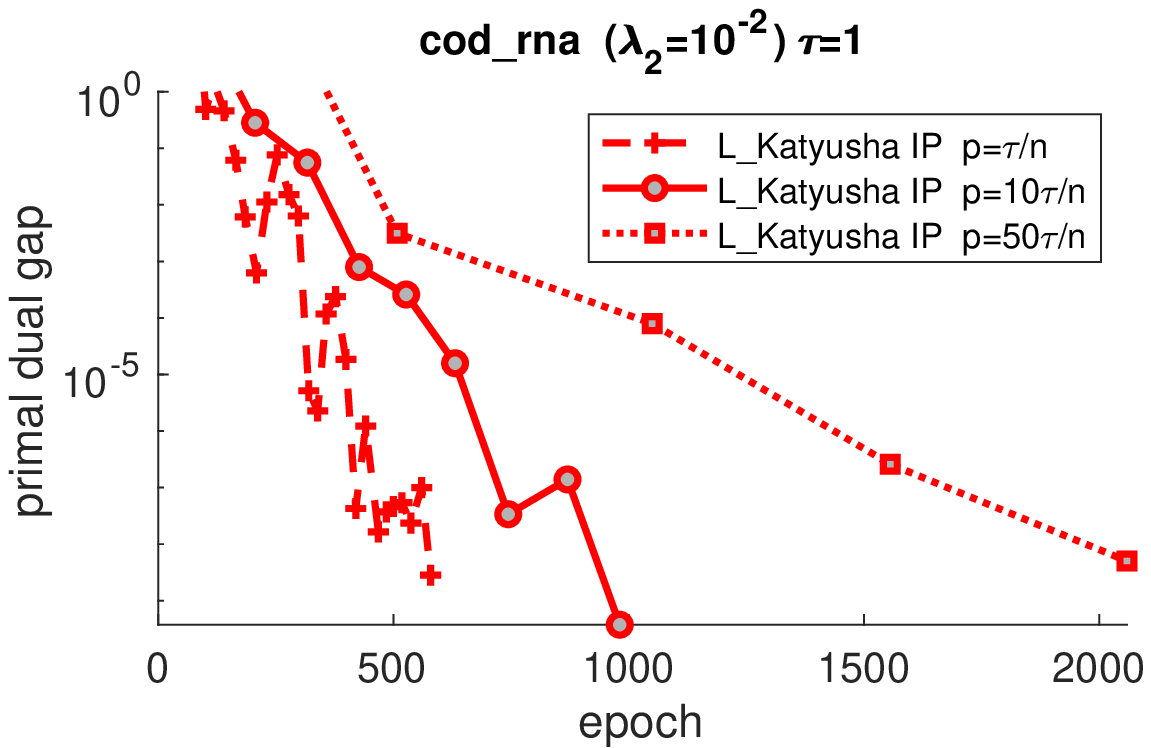}}
\subfigure[different $p$, time]{\label{1d3}  \includegraphics[scale=0.38]{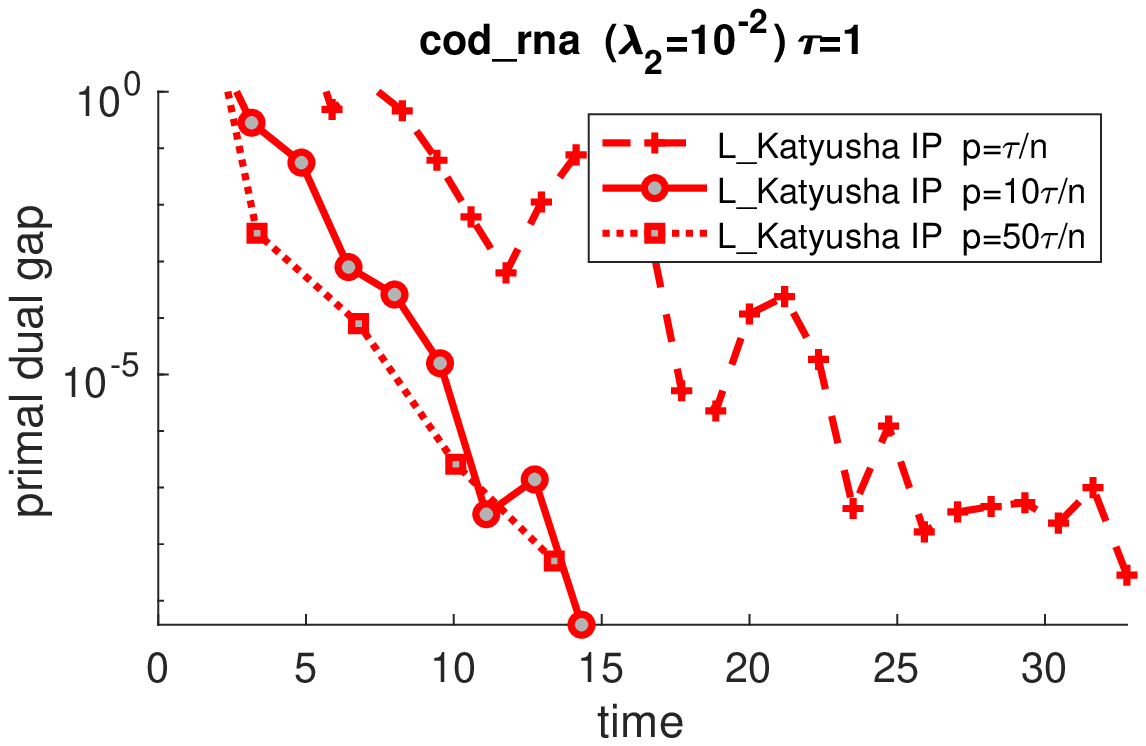}}
\caption{cod-rna}\label{fig1}
 \end{figure}

\begin{figure}[!ht]
 \subfigure[$\tau=1$]{\label{7d1}\includegraphics[scale=0.38]{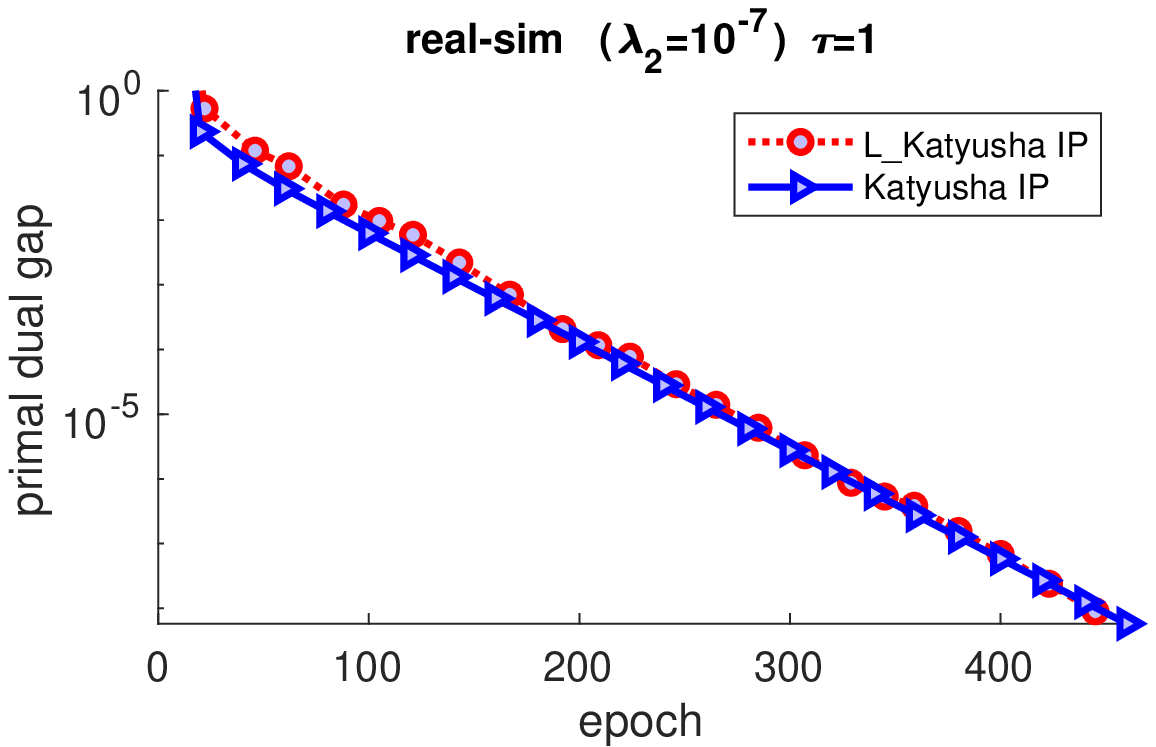}}
  \subfigure[$\tau=10$]{  \label{7d2}  \includegraphics[scale=0.38]{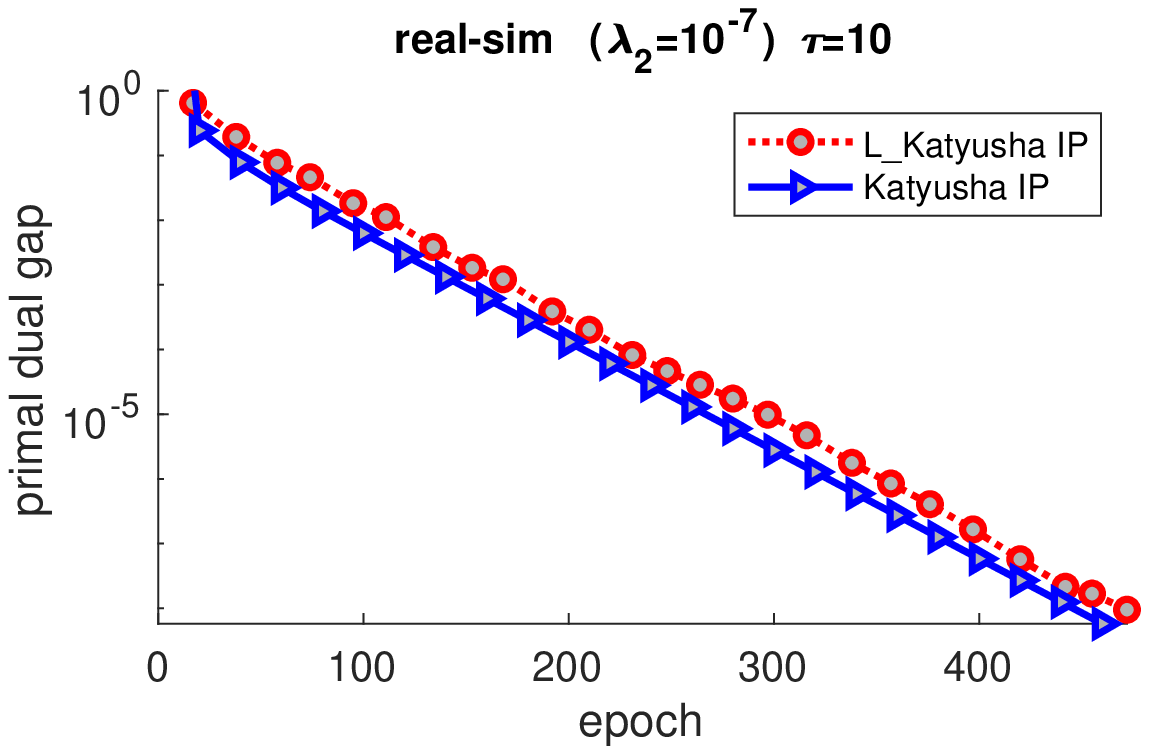}}
    \subfigure[$\tau=50$]{ \label{7d3} \includegraphics[scale=0.38]{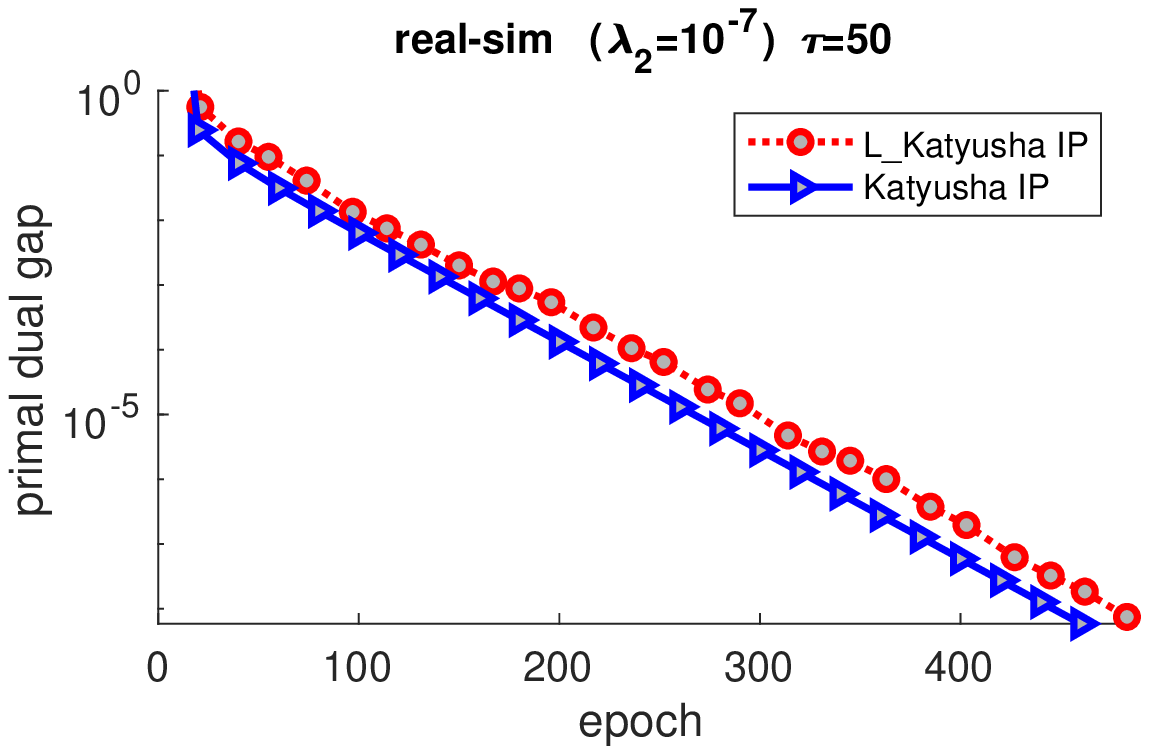}}
    \caption{L-Katyusha V.S. Katyusha, epoch plot, real-sim}\label{fig7}
 \end{figure}
\begin{figure}[!ht]
 \subfigure[$\tau=1$]{\label{8d1}\includegraphics[scale=0.38]{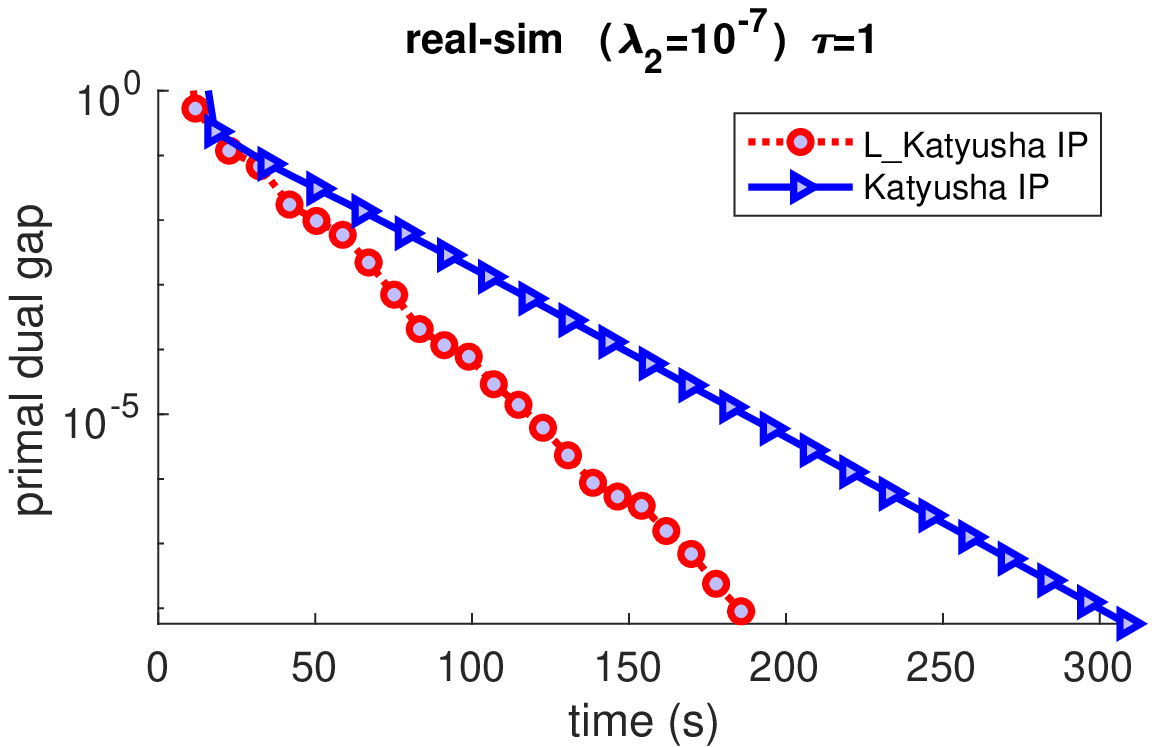}}
  \subfigure[$\tau=10$]{  \label{8d2}  \includegraphics[scale=0.38]{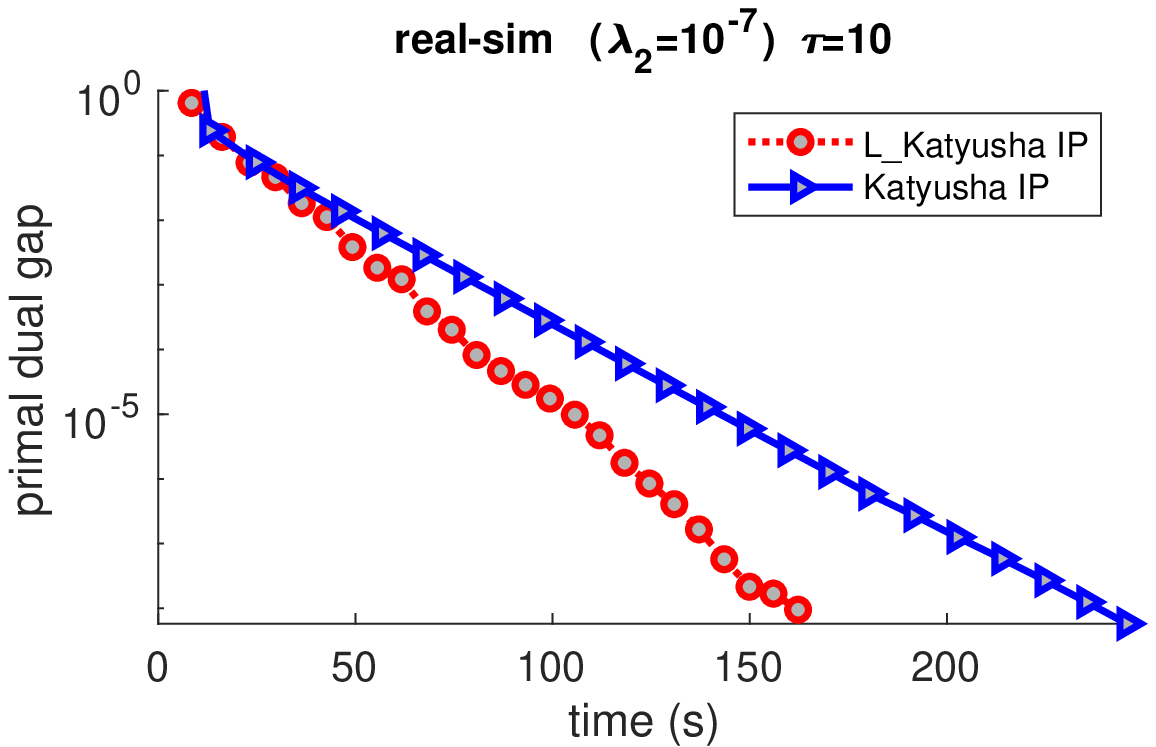}}
    \subfigure[$\tau=50$]{ \label{8d3} \includegraphics[scale=0.38]{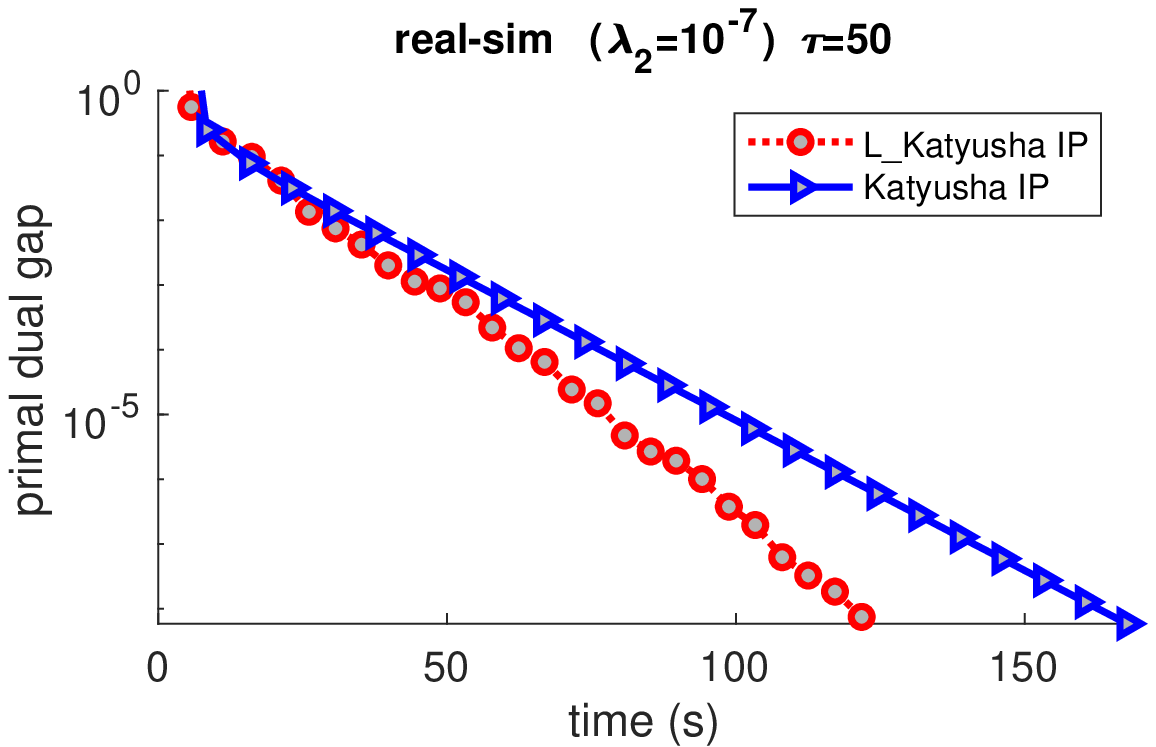}}
    \caption{L-Katyusha V.S. Katyusha, time plot, real-sim}\label{fig8}
 \end{figure}
 
 \begin{figure}[!ht]
 \subfigure[$\tau=1$]{\label{9d1}\includegraphics[scale=0.38]{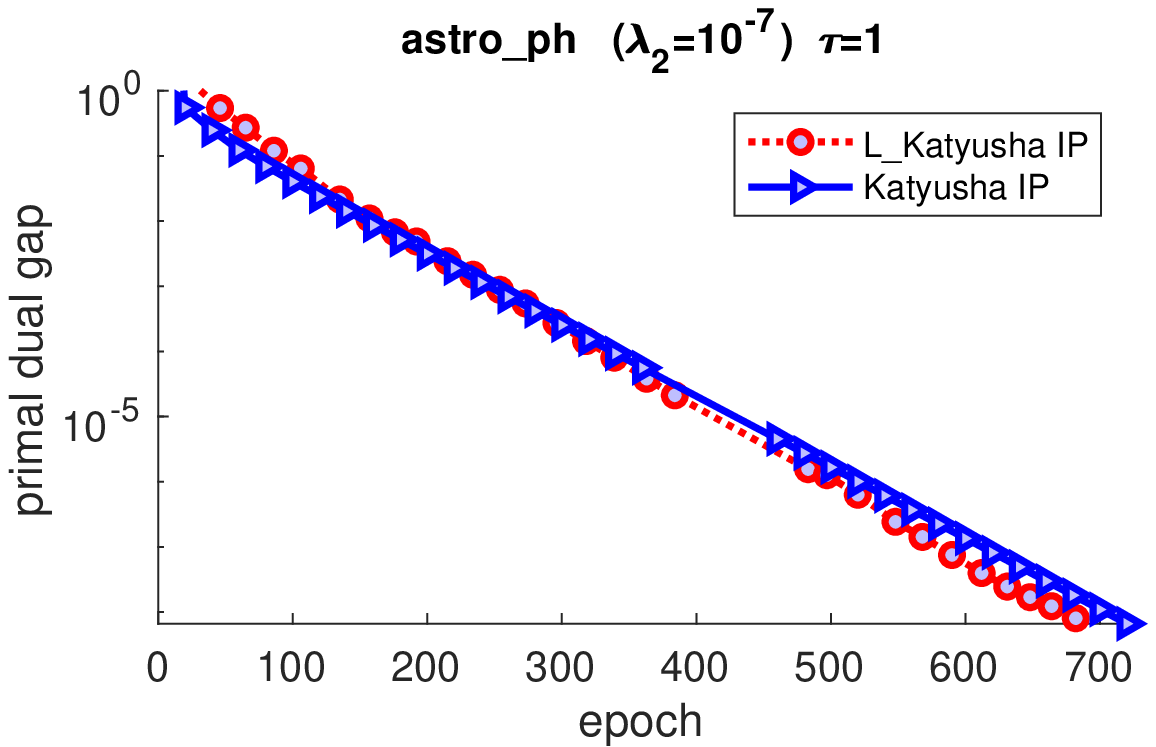}}
  \subfigure[$\tau=10$]{  \label{9d2}  \includegraphics[scale=0.38]{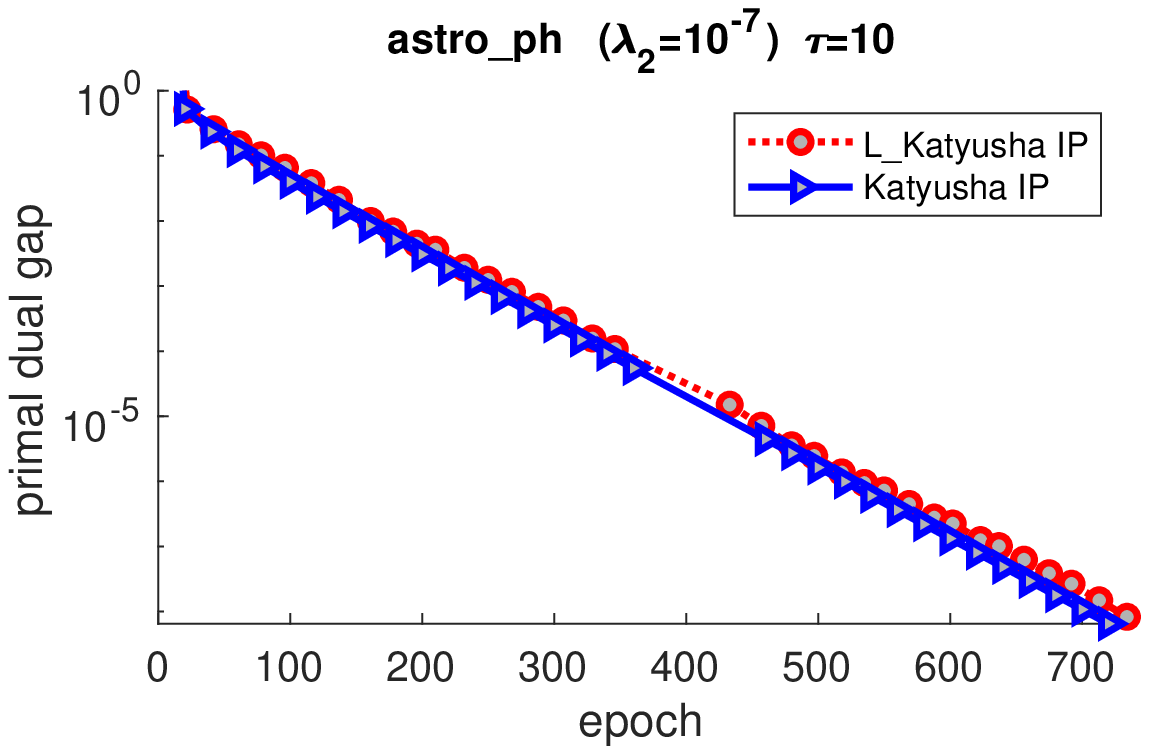}}
    \subfigure[$\tau=50$]{ \label{9d3} \includegraphics[scale=0.38]{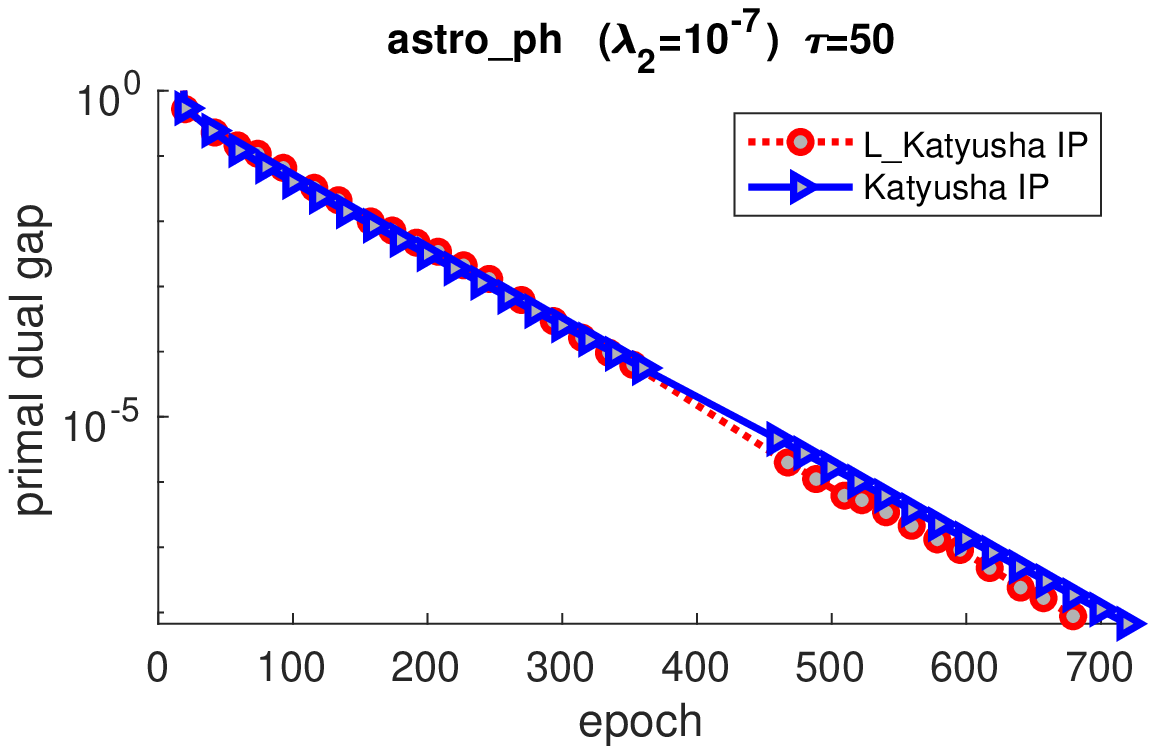}}
    \caption{L-Katyusha V.S. Katyusha, epoch plot, astro\_ph}\label{fig9}
 \end{figure}
\begin{figure}[!ht]
 \subfigure[$\tau=1$]{\label{10d1}\includegraphics[scale=0.38]{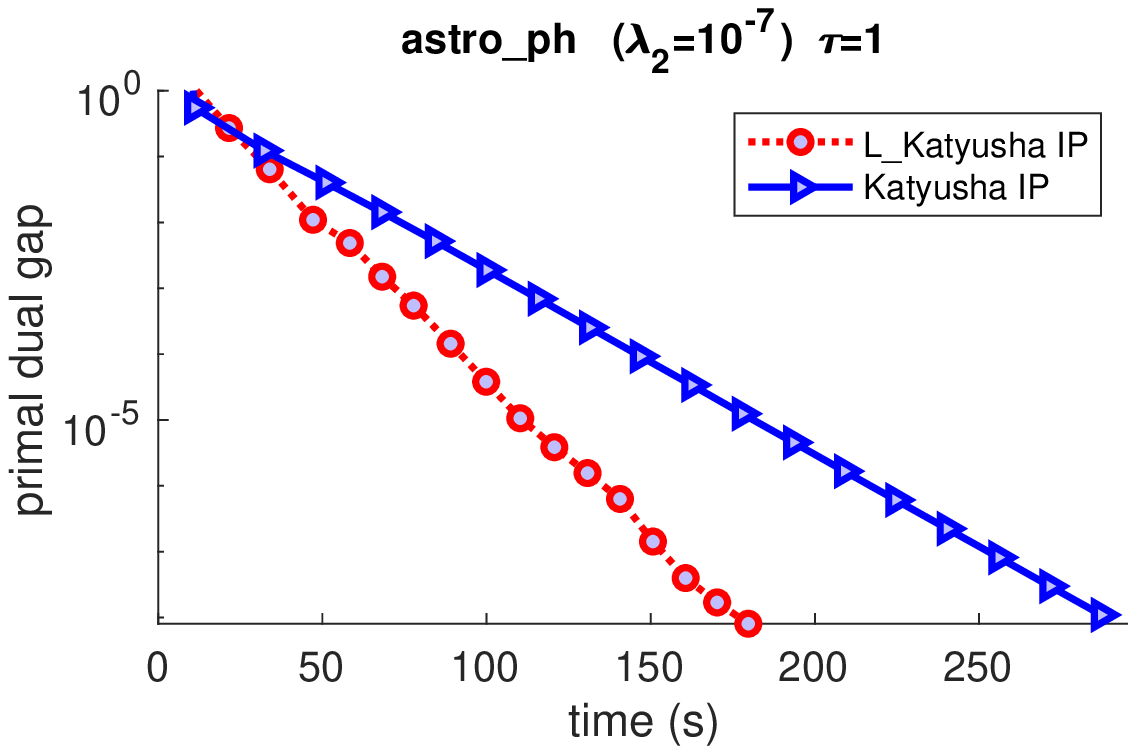}}
  \subfigure[$\tau=10$]{  \label{10d2}  \includegraphics[scale=0.38]{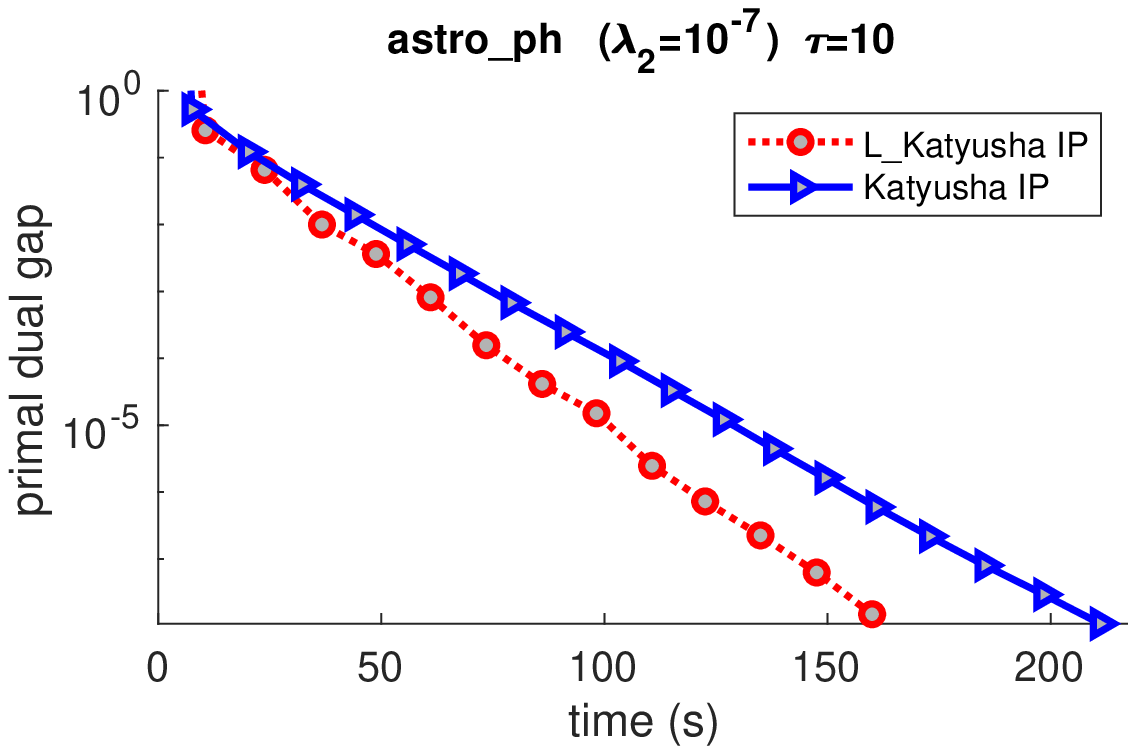}}
    \subfigure[$\tau=50$]{ \label{10d3} \includegraphics[scale=0.38]{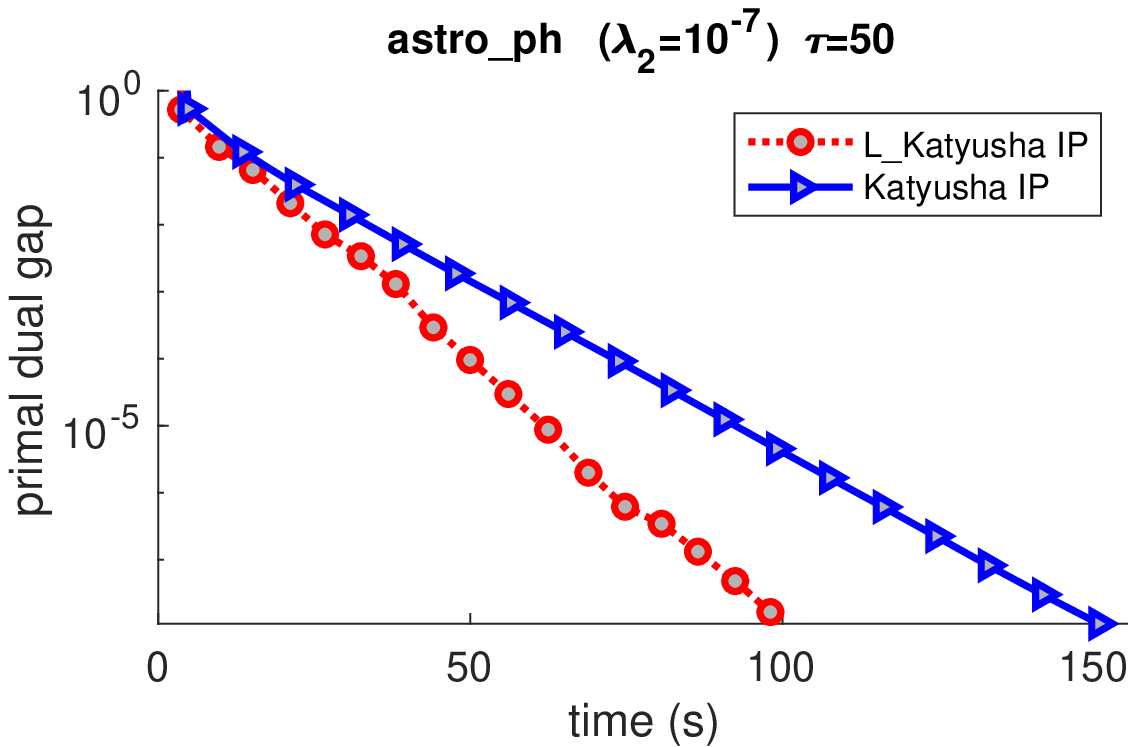}}
    \caption{L-Katyusha V.S. Katyusha, time plot, astro\_ph}\label{fig10}
 \end{figure}
 
\clearpage

\bibliographystyle{plain}
\bibliography{LSVRG_AS.bib}

\appendix
\newpage

\part*{Appendix}

\section{Proof of Lemma \ref{lm:groups}}

We construct a group sampling $S$ as follows. 
\vskip 2mm

First we distribute each index $i$ a $p_i$. Then we divide $[n]$ into several groups as follows. For the odered sequence $p_1, ..., p_n$, we add them from $p_1$ consecutively, until the summation is greater than one at $p_{i_1}$. We collect $\{ p_1, ..., p_{{i_1}-1} \}$ as a group $C_1$. In such way, $\sum_{i\in C_1}p_i$ is less than or equal to one. Next we repeat this procedure to the ordered sequence $p_{i_1}, ..., p_n$ until every index is divided into some group. The rest of the formation of the sampling is the same as the final step in the definition of group sampling. 

\vskip 2mm

Assume the number of the groups is $t$, and the groups we get from the above construction are ordered sets $C_1, ..., C_t$. According to the construction, we know 
$$
\sum_{i\in C_j} p_i + \sum_{i\in C_{j+1}}p_i > 1,
$$
for any $1\leq j < t$. 
Next, we consider two cases. 

{\bf Case 1.} Suppose $t$ is even.  Then $\frac{t}{2} < \tau$. If $\tau$ is an integer, then $t \leq 2\tau -2$, otherwise, $t< 2\tau$. \\

{\bf Case 2.} Suppose $t$ is odd. Then $\frac{t-1}{2} < \tau$. If $\tau$ is an integer, then $t \leq 2\tau -1$, otherwise, $t< 2\tau + 1$.

\section{Strongly Convex Case: Proof of Theorem \ref{Th:convp} }

\subsection{Lemmas} 

\begin{lemma}\label{lm:Dk+1}
	$$
	\mathbb{E}_k[{\cal D}^{k+1}_S] \leq (1-p)\mathbb{E}_k[{\cal D}^k_S] + \frac{8\eta^2{\cal L}_1}{1 + \eta\mu_{\psi}} (f(x^k) - f(x^*) - \langle \nabla f(x^*), x^k - x^* \rangle ).
	$$
\end{lemma}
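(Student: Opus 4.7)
The plan is to exploit the fact that $\cD^{k+1}_S$ depends on $x^k$ and $w^k$ only through $w^{k+1}$, and then use the coin-flip law for $w^{k+1}$ together with the expected smoothness assumption.

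First I would note that, by definition,
\[
\cD^{k+1}_S \;=\; \frac{1}{1+\eta\mu_\psi}\cdot \frac{4\eta^2}{p}\cdot \Bigl\|\tfrac{1}{n}(\mathbf{G}(w^{k+1})-\mathbf{G}(x^*))\theta_S\mathbf{I}_S e\Bigr\|^2,
\]
where the sampling $S$ inside is independent of everything else (the subscript $S$ on $\cD$ records this dependence on the distribution). The only source of randomness in $\cD^{k+1}_S$ beyond $\cF_k$ is therefore the Bernoulli update of $w^{k+1}$.

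Next I would condition on $\cF_k$ and split by the coin flip. With probability $1-p$ we have $w^{k+1}=w^k$, and this branch contributes exactly $(1-p)\,\cD^k_S$. With probability $p$ we have $w^{k+1}=x^k$, and this branch contributes
\[
p\cdot \frac{1}{1+\eta\mu_\psi}\cdot \frac{4\eta^2}{p}\cdot \mathbb{E}_S\Bigl\|\tfrac{1}{n}(\mathbf{G}(x^k)-\mathbf{G}(x^*))\theta_S\mathbf{I}_S e\Bigr\|^2,
\]
in which the factor $p$ cancels the $1/p$ from the definition of $\cD$. Applying Assumption~\ref{as:expsmooth} at the point $x^k$ bounds the expected squared norm by $2\cL_1\bigl(f(x^k)-f(x^*)-\langle \nabla f(x^*),x^k-x^*\rangle\bigr)$, and combining the surviving numerical factor $4$ with the $2$ from expected smoothness produces the advertised constant $8\eta^2\cL_1/(1+\eta\mu_\psi)$.

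Adding the two branches yields the claimed inequality. There is essentially no obstacle here: the only bookkeeping point worth emphasizing is the cancellation $p\cdot (4\eta^2/p)=4\eta^2$, which is precisely why the $1/p$ was placed in the definition of $\cD^k_S$ in the first place, and the fact that $\mathbb{E}_k[\cD^k_S]=\cD^k_S$ since $\cD^k_S$ is already averaged over $S$ and is $\cF_k$-measurable.
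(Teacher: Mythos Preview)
Your argument is correct and matches the paper's proof exactly: split $\mathbb{E}_k[\cD^{k+1}_S]$ according to the Bernoulli update of $w^{k+1}$, cancel the factor $p$ against the $1/p$ in the definition of $\cD$, and bound the $w^{k+1}=x^k$ branch via Assumption~\ref{as:expsmooth}. One minor notational slip in your last remark: in the paper $\cD^k_S$ is itself random in $S$ (the Lyapunov function is explicitly called \emph{stochastic}), so $\mathbb{E}_k[\cD^k_S]$ is a genuine average over $S$ rather than equal to $\cD^k_S$; this does not affect the validity of your argument.
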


\begin{proof}
	
	From Assumption \ref{as:expsmooth}, we have 
	\begin{eqnarray*}
		\mathbb{E}_k[{\cal D}^{k+1}_S] &=& (1-p)\mathbb{E}_k[{\cal D}^k_S] + \frac{4\eta^2}{1 + \eta\mu_{\psi}} \mathbb{E}_k\left[\left\|\frac{1}{n}({\bf G}(x^k) - {\bf G}(x^*)){\theta}_{S}{\bf I}_{S}e \right\|^2 \right]\\ 
		&\leq& (1-p)\mathbb{E}_k[{\cal D}^k_S] + \frac{8\eta^2{\cal L}_1}{1 + \eta\mu_{\psi}} (f(x^k) - f(x^*) - \langle \nabla f(x^*), x^k - x^* \rangle ). 
	\end{eqnarray*}
	
\end{proof}

\begin{lemma}\label{lm:gk2}
	For $g^k$, we have 
	$$
	\mathbb{E}_k[\|g^k - \nabla f(x^*)\|^2] \leq 4{\cal L}_1 (f(x^k) - f(x^*) - \langle \nabla f(x^*), x^k - x^* \rangle ) + \frac{(1 + \eta\mu_{\psi})p}{2\eta^2} \mathbb{E}_k[{\cal D}^k_S]. 
	$$
\end{lemma}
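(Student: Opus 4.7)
The plan is to decompose $g^k - \nabla f(x^*)$ into a ``main gradient'' piece that can be handled by the expected smoothness (Assumption~\ref{as:expsmooth}) and a ``variance reduction correction'' piece that has mean zero thanks to Assumption~\ref{as:SthetaS}. First I would use the identity $\nabla f(x^*) = \tfrac{1}{n}{\bf G}(x^*)e$ and add/subtract $\tfrac{1}{n}({\bf G}(x^*)){\theta}_{S_k}{\bf I}_{S_k}e$ inside the expression for $g^k$ to rewrite
\[
g^k - \nabla f(x^*) \;=\; \underbrace{\tfrac{1}{n}({\bf G}(x^k) - {\bf G}(x^*)){\theta}_{S_k}{\bf I}_{S_k}e}_{=: a} \;+\; \underbrace{\tfrac{1}{n}({\bf G}(w^k) - {\bf G}(x^*))\bigl(e - {\theta}_{S_k}{\bf I}_{S_k}e\bigr)}_{=: b}.
\]

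Then I would apply $\|a+b\|^2 \le 2\|a\|^2 + 2\|b\|^2$ and take conditional expectation $\mathbb{E}_k$. The term $2\mathbb{E}_k\|a\|^2$ is bounded directly by $4{\cal L}_1(f(x^k) - f(x^*) - \langle \nabla f(x^*), x^k - x^*\rangle)$ via Assumption~\ref{as:expsmooth}, which gives the first term on the right-hand side of the claimed inequality.

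For the second term I would exploit that $b$ is mean-zero under $\mathbb{E}_k$: by Assumption~\ref{as:SthetaS} we have $\mathbb{E}[{\theta}_{S_k}{\bf I}_{S_k}]e = e$, so $\mathbb{E}_k[b] = 0$ and hence $\mathbb{E}_k\|b\|^2 = \mathbb{E}_k\bigl\|\tfrac{1}{n}({\bf G}(w^k) - {\bf G}(x^*)){\theta}_{S_k}{\bf I}_{S_k}e\bigr\|^2 - \bigl\|\tfrac{1}{n}({\bf G}(w^k) - {\bf G}(x^*))e\bigr\|^2$, which is at most $\mathbb{E}_k\bigl\|\tfrac{1}{n}({\bf G}(w^k) - {\bf G}(x^*)){\theta}_{S_k}{\bf I}_{S_k}e\bigr\|^2$. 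Since the sampling in ${\cal D}^k_S$ is distributed identically to $S_k$, this last quantity equals $\tfrac{(1+\eta\mu_\psi)p}{4\eta^2}\mathbb{E}_k[{\cal D}^k_S]$, and multiplying by the factor $2$ in front of $\mathbb{E}_k\|b\|^2$ yields precisely $\tfrac{(1+\eta\mu_\psi)p}{2\eta^2}\mathbb{E}_k[{\cal D}^k_S]$.

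The main (very mild) obstacle is simply keeping the decomposition aligned with the definition of ${\cal D}^k_S$: one has to resist using a cross-term expansion (which would require a stronger independence argument) and instead just fall back on the non-negativity of the subtracted mean $\|\mathbb{E}_k[\cdot]\|^2$ to drop it. Everything else is bookkeeping between the constants in ${\cal D}^k_S$ and the prefactor appearing in the target inequality.
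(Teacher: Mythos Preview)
Your proof is correct and follows essentially the same approach as the paper: the same add/subtract of $\tfrac{1}{n}{\bf G}(x^*)\theta_{S_k}{\bf I}_{S_k}e$ to split $g^k-\nabla f(x^*)$ into the two pieces $a$ and $b$, the same use of $\|a+b\|^2\le 2\|a\|^2+2\|b\|^2$, Assumption~\ref{as:expsmooth} for the $a$-term, and the variance inequality $\mathbb{E}_k\|b\|^2\le \mathbb{E}_k\bigl\|\tfrac{1}{n}({\bf G}(w^k)-{\bf G}(x^*))\theta_{S}{\bf I}_{S}e\bigr\|^2$ for the $b$-term, followed by identification with the definition of ${\cal D}^k_S$.
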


\begin{proof}
	
	\begin{eqnarray*}
		&& \mathbb{E}_k[\|g^k - \nabla f(x^*)\|^2] \\ 
		&=& \mathbb{E}\left[ \left\| \frac{1}{n}({\bf G}(x^k) - {\bf G}(w^k)){\theta}_{S_k}{\bf I}_{S_k}e + \frac{1}{n}{\bf G}(w^k)e - \nabla f(x^*)\right\|^2  \right] \\ 
		&\leq& 2\mathbb{E}_k\left[ \left\| \frac{1}{n}({\bf G}(x^k)-{\bf G}(x^*){\theta}_{S_k}{\bf I}_{S_k}e)\right\|^2 \right] \\ 
		&& + 2 \mathbb{E}_k\left[ \left\| \frac{1}{n}{\bf G}(w^k)e - \frac{1}{n}{\bf G}(x^*)e - \frac{1}{n}({\bf G}(w^k) - {\bf G}(x^*)){\theta}_{S}{\bf I}_{S}e\right\|^2 \right] \\ 
		&\overset{Assumption\ \ref{as:expsmooth}}{\leq}& 4{\cal L}_1  (f(x^k) - f(x^*) - \langle \nabla f(x^*), x^k - x^* \rangle )  \\ 
		&& + 2\mathbb{E}_k\left[ \left\| \frac{1}{n}({\bf G}(w^k) - {\bf G}(x^*)){\theta}_{S}{\bf I}_{S}e \right\|^2 \right] \\ 
		&=& 4{\cal L}_1  (f(x^k) - f(x^*) - \langle \nabla f(x^*), x^k - x^* \rangle ) + \frac{(1 + \eta\mu_{\psi})p}{2\eta^2} \mathbb{E}_k[{\cal D}^k_S].
	\end{eqnarray*}
	
\end{proof}

\subsection{Proof of Theorem \ref{Th:convp} }

	Since $x^*$ is the solution of problem (\ref{primal}), we have 
	$$
	x^* = \prox_{\eta} (x^* - \eta \nabla f(x^*)). 
	$$
	
	Then, 
	\begin{eqnarray*}
		\mathbb{E}_k[\| x^{k+1} - x^* \|^2 ] &=& \mathbb{E}_k[\| \prox_{\eta}(x^k - \eta g^k) - \prox_{\eta} (x^* - \eta \nabla f(x^*))\|^2 ] \\
		&\leq& \frac{1}{1+\eta\mu_{\psi}}\mathbb{E}_k[ \|x^k - \eta g^k - (x^* - \eta \nabla f(x^*)) \|^2 ] \\ 
		&=&\frac{1}{1 + \eta\mu_{\psi}} \left( \|x^k - x^* \|^2 - 2\eta \langle \nabla f(x^k) - \nabla f(x^*), x^k -x^* \rangle \right) \\ 
		&& + \frac{\eta^2}{1 + \eta\mu_{\psi}} \mathbb{E}_k[ \| g^k - \nabla f(x^*) \|^2 ]  \\ 
		&\leq& \frac{(1 - \eta \mu_f)}{1 + \eta\mu_{\psi}} \|x^k - x^* \|^2 - \frac{2\eta}{1 + \eta\mu_{\psi}} (f(x^k) - f(x^*) - \langle \nabla f(x^*), x^k - x^* \rangle ) \\ 
		&& +  \frac{\eta^2}{1 + \eta\mu_{\psi}} \mathbb{E}_k[ \| g^k - \nabla f(x^*) \|^2 ]. 
	\end{eqnarray*}
	
	Hence, 
	\begin{eqnarray*}
		&& \mathbb{E}_k [\|\|x^{k+1} -x^*\|^2 + {\cal D}^{k+1}_S \|^2 ] \\  &\overset{Lemma ~\ref{lm:Dk+1}}{\leq}& \frac{(1 - \eta \mu_f)}{1 + \eta\mu_{\psi}} \|x^k - x^* \|^2 + (1-p)\mathbb{E}_k[{\cal D}^k_S] +  \frac{\eta^2}{1 + \eta\mu_{\psi}} \mathbb{E}_k[ \| g^k - \nabla f(x^*) \|^2 ] \\ 
		&&  -  \frac{2\eta}{1 + \eta\mu_{\psi}} (1 - 4\eta {\cal L}_1) (f(x^k) - f(x^*) - \langle \nabla f(x^*), x^k - x^* \rangle ) \\ 
		&\overset{Lemma ~\ref{lm:gk2}}{\leq}& \frac{(1 - \eta \mu_f)}{1 + \eta\mu_{\psi}}\|x^k - x^* \|^2 + (1-\frac{p}{2})\mathbb{E}_k[{\cal D}^k_S] \\ 
		&& -  \frac{2\eta}{1 + \eta\mu_{\psi}} (1 - 6\eta {\cal L}_1) (f(x^k) - f(x^*) - \langle \nabla f(x^*), x^k - x^* \rangle ). 
	\end{eqnarray*}
	
	Now if the step size $\eta \leq \frac{1}{6{\cal L}_1}$, then by $\mu = \mu_f + \mu_{\psi}$,  we obtain the desired inequality: 
	$$
	\mathbb{E}_k\left[ \|x^{k+1} - x^*\|^2 + {\cal D}^{k+1}_S\|^2 \right]  \leq (1-\frac{\eta \mu}{1 + \eta\mu_{\psi}}) \|x^k-x^*\|^2 + (1-\frac{p}{2})\mathbb{E}_k \left[{\cal D}^k_S \right].
	$$
	Therefore, if $\eta = \frac{1}{6{\cal L}_1}$, then in order to gurantee $\mathbb{E}[{\Psi}^k_S] \leq \epsilon \cdot \mathbb{E}[{\Psi}^0_S]$, we only need to let 
	$$
	k\geq {\cal O}\left( \left( \frac{1}{p} + \frac{{\cal L}_1}{\mu}\right) \log\frac{1}{\epsilon}\right) .
	$$

\section{Strongly Convex Case: Proof of Theorem \ref{Th:LKa} }

\subsection{Lemmas}

From Assumption \ref{as:expL2}, we have the following lemma.
\begin{lemma}\label{lm:expL2}
	We have 
	\begin{equation}\label{eq:expL2}
	\mathbb{E}_k [ \|g^k - \nabla f(x^k)\|^2 ] \leq 2{\cal L}_2 (f(w^k) - f(x^k) - \langle \nabla f(x^k), w^k-x^k \rangle ). 
	\end{equation}
\end{lemma}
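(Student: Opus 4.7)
The plan is to recognize that the lemma is essentially a direct application of Assumption~\ref{as:expL2} once we identify the stochastic gradient estimator's deviation from $\nabla f(x^k)$ with the quantity appearing inside the norm on the left-hand side of that assumption. The first step is to observe that $\nabla f(x) = \tfrac{1}{n}{\bf G}(x)e$, since by definition $\nabla f(x) = \tfrac{1}{n}\sum_{i=1}^n \nabla f_i(x)$ and the columns of ${\bf G}(x)$ are precisely the $\nabla f_i(x)$.

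Using this identity together with the definition of the search direction, I would compute
\[
g^k - \nabla f(x^k) \;=\; \tfrac{1}{n}({\bf G}(x^k) - {\bf G}(w^k)){\theta}_{S_k}{\bf I}_{S_k}e \;-\; \tfrac{1}{n}({\bf G}(x^k) - {\bf G}(w^k))e,
\]
which puts $g^k - \nabla f(x^k)$ exactly in the form of the vector whose squared norm is bounded in Assumption~\ref{as:expL2}.

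The next step is the only subtle point: applying Assumption~\ref{as:expL2} directly with $x=x^k$ and $y=w^k$ would give a right-hand side of the form $2\mathcal{L}_2(f(x^k)-f(w^k)-\langle \nabla f(w^k), x^k-w^k\rangle)$, which is \emph{not} the Bregman divergence appearing in the statement. To fix this, I would apply the assumption instead with $x=w^k$ and $y=x^k$. Since the vector inside the squared norm merely flips sign under this swap, its squared norm is unchanged, so the left-hand side still equals $\mathbb{E}_k[\|g^k-\nabla f(x^k)\|^2]$, while the right-hand side becomes exactly $2\mathcal{L}_2(f(w^k)-f(x^k)-\langle \nabla f(x^k), w^k-x^k\rangle)$, matching the claim.

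There is no real obstacle here; the entire argument is an identification of terms plus the symmetry observation that lets us match the specific form of the Bregman divergence required later in the analysis (where $\nabla f(x^k)$, not $\nabla f(w^k)$, is the natural reference point).
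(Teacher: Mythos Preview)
Your proposal is correct and matches the paper's approach: the paper simply states that the lemma follows from Assumption~\ref{as:expL2} without writing out any details, so your identification of $g^k-\nabla f(x^k)$ with the quantity in the assumption and your sign-flip observation to pick $x=w^k$, $y=x^k$ is exactly the implicit argument.
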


\begin{lemma}\label{lm:couple1}
	We have 
	\begin{equation}\label{eq:couple1}
	\langle g^k, x^*-z^{k+1} \rangle + \frac{\mu_f}{2}\|x^k -x^*\|^2 \geq \frac{L}{2\eta}\|z^k - z^{k+1}\|^2 + {\cal Z}^{k+1} - \frac{1}{1+\eta\sigma}{\cal Z}^k + \psi(z^{k+1}) - \psi(x^*). 
	\end{equation}
\end{lemma}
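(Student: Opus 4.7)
The plan is to derive the inequality directly from the defining optimality of the proximal update for $z^{k+1}$, combined with the strong convexity of its implicit objective. First I would rewrite the prox step
\[
z^{k+1} = \prox_{\tfrac{\eta}{(1+\eta\sigma_1)L}}\!\Bigl(\tfrac{1}{1+\eta\sigma_1}(\eta\sigma_1 x^k + z^k - \tfrac{\eta}{L}g^k)\Bigr)
\]
as an exact minimization. Expanding the quadratic $\tfrac{(1+\eta\sigma_1)L}{2\eta}\|z-v\|^2$ against the given $v$ and dropping terms independent of $z$, one checks that
\[
z^{k+1} = \arg\min_z \Phi(z), \qquad \Phi(z) \eqdef \langle g^k, z\rangle + \tfrac{L\sigma_1}{2}\|z-x^k\|^2 + \tfrac{L}{2\eta}\|z-z^k\|^2 + \psi(z).
\]

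Second, I would observe that $\Phi$ is strongly convex with modulus $L\sigma_1 + \mu_\psi + \tfrac{L}{\eta} = \tfrac{L(1+\eta\sigma)}{\eta}$, using $\sigma_1 = \mu_f/L$, $\sigma = (\mu_f+\mu_\psi)/L$ and the $\mu_\psi$-strong convexity of $\psi$. Applying the standard consequence of strong convexity of $\Phi$ at the minimizer $z^{k+1}$ evaluated against $x^*$ yields
\[
\Phi(x^*) \geq \Phi(z^{k+1}) + \tfrac{L(1+\eta\sigma)}{2\eta}\|x^* - z^{k+1}\|^2.
\]

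Third, I would rearrange this inequality term by term. The $\langle g^k,\cdot\rangle$ terms combine into $\langle g^k, x^*-z^{k+1}\rangle$; the $\tfrac{L\sigma_1}{2}\|x^*-x^k\|^2$ term becomes $\tfrac{\mu_f}{2}\|x^k-x^*\|^2$ since $L\sigma_1 = \mu_f$; the $\tfrac{L}{2\eta}\|x^*-z^k\|^2$ term equals $\tfrac{1}{1+\eta\sigma}{\cal Z}^k$ by the definition of ${\cal Z}^k$; and the $\tfrac{L(1+\eta\sigma)}{2\eta}\|x^*-z^{k+1}\|^2$ term is exactly ${\cal Z}^{k+1}$. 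The $\tfrac{L}{2\eta}\|z^{k+1}-z^k\|^2$ term appears on the right-hand side as required, and finally the residual $\tfrac{L\sigma_1}{2}\|z^{k+1}-x^k\|^2 \geq 0$ is simply dropped. Collecting everything produces the claimed inequality.

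I do not expect any real obstacle here; the bookkeeping of constants (matching $\tfrac{L}{2\eta}\|x^*-z^k\|^2$ with $\tfrac{1}{1+\eta\sigma}{\cal Z}^k$ and confirming the strong-convexity modulus collapses cleanly to $\tfrac{L(1+\eta\sigma)}{\eta}$) is the only thing that requires care, and the identities $L\sigma_1 = \mu_f$, $L\sigma = \mu_f+\mu_\psi$ make this transparent. The $\psi(x^*)-\psi(z^{k+1})$ contribution is absorbed on the correct side of the final inequality.
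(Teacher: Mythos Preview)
Your proposal is correct and essentially follows the paper's approach: both arguments exploit the optimality of the proximal step for $z^{k+1}$ together with the $\mu_\psi$-strong convexity of $\psi$, then drop the nonnegative term $\tfrac{\mu_f}{2}\|z^{k+1}-x^k\|^2$. The only cosmetic difference is that the paper writes out the first-order optimality condition $g^k = \tfrac{L}{\eta}(z^k-z^{k+1}) + \mu_f(x^k-z^{k+1}) - g$ with $g\in\partial\psi(z^{k+1})$ and applies polarization identities to the resulting inner products, whereas you apply the strong-convexity inequality $\Phi(x^*)\ge\Phi(z^{k+1})+\tfrac{L(1+\eta\sigma)}{2\eta}\|x^*-z^{k+1}\|^2$ directly; these are equivalent formulations of the same step.
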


\begin{proof}
	
	Since 
	$$
	z^{k+1} = \prox_{\frac{\eta}{(1+ \eta\sigma_1)L}} \left( \frac{1}{1+ \eta \sigma_1}(\eta \sigma_1 x^k + z^k - \frac{\eta}{L}g^k )  \right), 
	$$
	we have 
	$$
	z^{k+1} -  \frac{1}{1+ \eta \sigma_1}(\eta \sigma_1 x^k + z^k - \frac{\eta}{L}g^k )  + \frac{\eta}{(1+\eta \sigma_1)L} g = 0,
	$$
	where $g$ is some subgradient of $\psi(z)$ at $z^{k+1}$. This along with $\mu_f = L\sigma_1$ implies that 
	\begin{equation}\label{eq:psig}
	g^k = \frac{L}{\eta} (z^k - z^{k+1}) + \mu_f (x^k - z^{k+1}) - g. 
	\end{equation}
	
	Therefore, we have 
	\begin{eqnarray*}
		\langle g^k, z^{k+1} - x^* \rangle &=& \mu_f \langle z^{k+1} - x^*, x^k -z^{k+1} \rangle + \frac{L}{\eta} \langle z^{k+1}-x^*, z^k - z^{k+1} \rangle - \langle z^{k+1} - x^*, g \rangle \\
		&=& \frac{\mu_f}{2} \left( \|x^k - x^*\|^2 - \|x^k - z^{k+1}\|^2 - \|z^{k+1} -x^*\|^2  \right) \\ 
		&& + \frac{L}{2\eta} \left(  \|z^k -x^*\|^2 - \|z^k - z^{k+1}\|^2 - \|z^{k+1} - x^*\|^2  \right) - \langle z^{k+1} - x^*, g \rangle \\
		&\leq& \frac{\mu_f}{2} \|x^k - x^*\|^2 + \frac{L}{2\eta} \left(  \|z^k - x^*\|^2 - (1+\eta \sigma_1)\|z^{k+1} -x^*\|^2  \right) \\ 
		&& - \frac{L}{2\eta} \|z^k - z^{k+1}\|^2 + \psi(x^*) - \psi(z^{k+1}) - \frac{\mu_{\psi}}{2} \|z^{k+1} - x^*\|^2 \\ 
		&=& \frac{\mu_f}{2} \|x^k - x^*\|^2 + \frac{L}{2\eta} \left(  \|z^k - x^*\|^2 - (1+\eta \sigma)\|z^{k+1} -x^*\|^2  \right) \\ 
		&& - \frac{L}{2\eta} \|z^k - z^{k+1}\|^2 + \psi(x^*) - \psi(z^{k+1})
	\end{eqnarray*}
	where the last inequality comes from $-\|x^k - z^{k+1}\|^2 \leq 0$ and $\psi$ is $\mu_{\psi}$-strongly convex, the last equality comes from $\sigma = \sigma_1+\sigma_2$. By the definition of ${\cal Z}^k$, we can obtain the result.
	
\end{proof}

\begin{lemma}\label{lm:couple2}
	We have 
	\begin{equation}\label{eq:couple2}
	\frac{1}{\theta_1}(f(y^{k+1}) - f(x^k)) - \frac{1}{4L\theta_1}\|g^k - \nabla f(x^k)\|^2 \leq \frac{L}{2\eta}\|z^{k+1} - z^k\|^2 + \langle g^k, z^{k+1} -z^k \rangle. 
	\end{equation}
\end{lemma}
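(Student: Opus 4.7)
The plan is to derive the inequality from the $L_f$-smoothness of $f$ applied to the short step $y^{k+1} = x^k + \theta_1(z^{k+1}-z^k)$, then convert $\nabla f(x^k)$ into $g^k$ via Young's inequality, calibrating the Young constant so that the residual squared-norm term matches $L/(2\eta)$ after using $\eta=1/(3\theta_1)$.

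First I would apply the descent lemma. Since $L=\max\{\cL_2,L_f\}\ge L_f$, $f$ is $L$-smooth, and the iterate $y^{k+1}-x^k=\theta_1(z^{k+1}-z^k)$ gives
\begin{equation*}
f(y^{k+1})\;\le\; f(x^k)+\theta_1\langle \nabla f(x^k),\, z^{k+1}-z^k\rangle+\frac{L\theta_1^{2}}{2}\|z^{k+1}-z^k\|^2.
\end{equation*}
Dividing by $\theta_1$ isolates the left-hand side $\tfrac{1}{\theta_1}(f(y^{k+1})-f(x^k))$ and produces $\langle \nabla f(x^k),z^{k+1}-z^k\rangle + \tfrac{L\theta_1}{2}\|z^{k+1}-z^k\|^2$ on the right.

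Next I would replace $\nabla f(x^k)$ by $g^k$. Writing $\langle\nabla f(x^k),z^{k+1}-z^k\rangle = \langle g^k,z^{k+1}-z^k\rangle + \langle \nabla f(x^k)-g^k, z^{k+1}-z^k\rangle$, I would apply the Young inequality $\langle a,b\rangle\le \tfrac{1}{2\lambda}\|a\|^2+\tfrac{\lambda}{2}\|b\|^2$ with $\lambda=2L\theta_1$, yielding
\begin{equation*}
\langle \nabla f(x^k)-g^k,\, z^{k+1}-z^k\rangle \;\le\; \frac{1}{4L\theta_1}\|g^k-\nabla f(x^k)\|^2 + L\theta_1\|z^{k+1}-z^k\|^2.
\end{equation*}
Combining with the previous display gives
\begin{equation*}
\frac{1}{\theta_1}(f(y^{k+1})-f(x^k)) \le \langle g^k, z^{k+1}-z^k\rangle + \frac{1}{4L\theta_1}\|g^k-\nabla f(x^k)\|^2 + \frac{3L\theta_1}{2}\|z^{k+1}-z^k\|^2.
\end{equation*}

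Finally, I would invoke the algorithmic choice $\eta=1/(3\theta_1)$, which gives $\tfrac{3L\theta_1}{2}=\tfrac{L}{2\eta}$, and rearrange to put $-\tfrac{1}{4L\theta_1}\|g^k-\nabla f(x^k)\|^2$ on the left, obtaining exactly \eqref{eq:couple2}. There is no real obstacle here: the only thing to get right is matching the Young constant to the prescribed $\eta$, and the lemma is essentially a calibrated smoothness-plus-Young bookkeeping step used in accelerated variance-reduced proofs.
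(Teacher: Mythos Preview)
Your proof is correct and matches the paper's approach: the paper simply cites Lemma~5.3 of \cite{LSVRG} (noting that $L\ge L_f$ and $\eta=1/(3\theta_1)$), and the argument behind that lemma is exactly the smoothness-plus-Young calculation you wrote out.
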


\begin{proof}
	Since $L = \max\{ {\cal L}_2, L_f  \} \geq L_f$, the proof is the same as that of Lemma 5.3 in \cite{LSVRG}. In fact, by choosing $\theta_2 = \frac{1}{2}$ in \cite{LSVRG}, $\eta = \frac{1}{3\theta_1}$, which is the same as our $\eta$, and then, the result holds straightforward from Lemma 5.3 in \cite{LSVRG}. 
\end{proof}

\begin{lemma}\label{lm:wk}
	We have 
	\begin{equation}\label{eq:wk}
	\mathbb{E}_k[{\cal W}^{k+1}] = (1-p){\cal W}^k + \frac{\theta_2}{q}{\cal Y}^k
	\end{equation}
\end{lemma}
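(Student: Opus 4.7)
The proof is a one-line calculation: the Lyapunov term ${\cal W}^{k+1}$ depends on the iterate $w^{k+1}$ only through its function value $P(w^{k+1})$, and $w^{k+1}$ is updated by a single coin flip, so the identity reduces to applying the law of total expectation to that coin flip.

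In detail, the plan is as follows. First, factor out the deterministic quantities $P(x^*)$, $\theta_1$, $\theta_2$, $p$, $q$ in the definition of ${\cal W}^{k+1}$, obtaining
$$\mathbb{E}_k[{\cal W}^{k+1}] \;=\; \frac{\theta_2}{pq\theta_1}\bigl(\mathbb{E}_k[P(w^{k+1})]-P(x^*)\bigr).$$
Under $\mathbb{E}_k$ the iterates $y^k$ and $w^k$ are fixed (both are $\mathcal{F}_k$-measurable), and the reference point is refreshed to $y^k$ with probability $p$ and left equal to $w^k$ otherwise (the loopless-Katyusha convention of~\cite{LSVRG}, which is what closes the Lyapunov analysis here). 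Therefore
$$\mathbb{E}_k[P(w^{k+1})] \;=\; p\,P(y^k) + (1-p)\,P(w^k).$$

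Substituting back and regrouping, the $p$ in the prefactor cancels the $p$ multiplying $P(y^k)$, and what remains is
$$\mathbb{E}_k[{\cal W}^{k+1}] \;=\; \frac{\theta_2}{q\theta_1}\bigl(P(y^k)-P(x^*)\bigr) \;+\; \frac{(1-p)\theta_2}{pq\theta_1}\bigl(P(w^k)-P(x^*)\bigr) \;=\; \frac{\theta_2}{q}\,{\cal Y}^k + (1-p)\,{\cal W}^k,$$
which is the identity claimed in the lemma.

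There is no real technical obstacle here: the normalization $1/(pq\theta_1)$ built into the definition of ${\cal W}^k$ is engineered precisely so that the refresh probability $p$ cancels cleanly and the coefficient of ${\cal Y}^k$ reduces to $\theta_2/q$. The only point requiring attention is to use $y^k$ (the gradient-iterate on which the function-value term ${\cal Y}^k$ is built) rather than $x^k$ as the new reference point when the coin flip succeeds, so that the right-hand side matches the definition ${\cal Y}^k = \tfrac{1}{\theta_1}(P(y^k)-P(x^*))$ exactly and no convexity inequality is needed.
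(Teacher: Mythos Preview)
Your argument is correct and essentially identical to the paper's own proof: factor out the constants, apply the law of total expectation for the coin flip, and regroup into $(1-p){\cal W}^k + \tfrac{\theta_2}{q}{\cal Y}^k$. Your remark that the refreshed reference point must be $y^k$ (not $x^k$) for the identity to close is exactly what the paper's proof uses as well, despite the pseudocode in Algorithm~\ref{alg:lkatyusha} displaying $x^k$.
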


\begin{proof}
	
	\begin{eqnarray*}
		\mathbb{E}_k[{\cal W}^{k+1}] &=& \frac{\theta_2}{pq\theta_1} \mathbb{E}_k(P(w^{k+1}) - P(x^*)) \\ 
		&=&  \frac{\theta_2}{pq\theta_1} \left((1-p)P(w^k) + pP(y^k) - f(x^*) \right) \\ 
		&=& (1-p){\cal W}^k + \frac{\theta_2}{q}{\cal Y}^k.
	\end{eqnarray*}
\end{proof}

\subsection{Proof of Theorem \ref{Th:LKa} }

	First, we have 
	\begin{eqnarray*}
		f(x^*) &\overset{\textcircled{1}}{\geq}& f(x^k) + \langle \nabla f(x^k), x^* - x^k \rangle + \frac{\mu_f}{2}\|x^k - x^*\|^2 \\ 
		&=& f(x^k) + \frac{\mu_f}{2}\|x^k-x^*\|^2 + \langle \nabla f(x^k), x^*-z^k + z^k -x^k \rangle \\ 
		&=& f(x^k) + \frac{\mu_f}{2}\|x^k -x^*\|^2 + \langle \nabla f(x^k), x^*-z^k \rangle + \frac{\theta_2}{\theta_1}\langle \nabla f(x^k), x^k-w^k \rangle \\ 
		&& + \frac{1-\theta_1-\theta_2}{\theta_1} \langle \nabla f(x^k), x^k-y^k \rangle \\ 
		&\overset{\textcircled{2}}{\geq}& f(x^k) + \frac{\theta_2}{\theta_1}\langle \nabla f(x^k), x^k-w^k \rangle + \frac{1-\theta_1-\theta_2}{\theta_1} (f(x^k) - f(y^k)) \\ 
		&& + \mathbb{E}_k \left[ \frac{\mu_f }{2}\|x^k - x^*\|^2 + \langle g^k, x^* - z^{k+1} \rangle + \langle g^k, z^{k+1}-z^k \rangle \right].
	\end{eqnarray*}
	Above, inequality $\textcircled{1}$ uses $\mu_f$-strong convexity of $f$, and inequality $\textcircled{2}$ uses the convexity of $f$ and $\mathbb{E}_k[g^k] = \nabla f(x^k)$. For the last term in the above inequality, we have 
	\begin{eqnarray*}
		&& \mathbb{E}_k \left[ \frac{\mu_f}{2}\|x^k - x^*\|^2 + \langle g^k, x^* - z^{k+1} \rangle + \langle g^k, z^{k+1}-z^k \right] \\  &\overset{(\ref{eq:couple1})}{\geq}& \mathbb{E}_k[{\cal Z}^{k+1}] - \frac{{\cal Z}^k}{1 + \eta\sigma} + \mathbb{E}_k\left[ \langle g^k, z^{k+1} -z^k\rangle + \frac{L}{2\eta}\|z^k - z^{k+1}\|^2 \right] + \psi(z^{k+1}) - \psi(x^*) \\ 
		&\overset{(\ref{eq:couple2})}{\geq}& \mathbb{E}_k[{\cal Z}^{k+1}] - \frac{{\cal Z}^k}{1 + \eta\sigma} + \mathbb{E}_k \left[ \frac{1}{\theta_1}(f(y^{k+1}) - f(x^k)) - \frac{1}{4L\theta_1}\|g^k - \nabla f(x^k)\|^2 \right] + \psi(z^{k+1}) - \psi(x^*) \\ 
		&\overset{(\ref{eq:expL2})}{\geq}& \mathbb{E}_k[{\cal Z}^{k+1}] - \frac{{\cal Z}^k}{1 + \eta\sigma} + \psi(z^{k+1}) - \psi(x^*) \\ 
		&& + \mathbb{E}_k \left[ \frac{1}{\theta_1}(f(y^{k+1}) - f(x^k)) - \frac{{\cal L}_2}{2L\theta_1}(f(w^k) - f(x^k) - \langle \nabla f(x^k), w^k-x^k \rangle ) \right] \\ 
		&=& \mathbb{E}_k[{\cal Z}^{k+1}] - \frac{{\cal Z}^k}{1 + \eta\sigma} + \psi(z^{k+1}) - \psi(x^*) \\ 
		&& + \mathbb{E}_k \left[ \frac{1}{\theta_1}(f(y^{k+1}) - f(x^k)) - \frac{\theta_2}{\theta_1}(f(w^k) - f(x^k) - \langle \nabla f(x^k), w^k-x^k \rangle ) \right]. 
	\end{eqnarray*}
	Therefore, 
	\begin{eqnarray*}
		f(x^*) &\geq& f(x^k) + \frac{1-\theta_1-\theta_2}{\theta_1} (f(x^k) - f(y^k)) + \mathbb{E}_k[{\cal Z}^{k+1}] - \frac{{\cal Z}^k}{1 + \eta\sigma}  \\ 
		&& + \mathbb{E}_k \left[ \frac{1}{\theta_1}(f(y^{k+1}) - f(x^k)) \right] - \frac{\theta_2}{\theta_1}(f(w^k) - f(x^k) ) + \psi(z^{k+1}) - \psi(x^*) \\ 
		&=& \mathbb{E}_k[{\cal Z}^{k+1}] - \frac{{\cal Z}^k}{1 + \eta\sigma} - \frac{1-\theta_1-\theta_2}{\theta_1}f(y^k) + \frac{1}{\theta_1}\mathbb{E}_k[f(y^{k+1})] - \frac{\theta_2}{\theta_1} f(w^k) + \psi(z^{k+1}) - \psi(x^*). 
	\end{eqnarray*}
	
	Moreover, since $\psi$ is convex, and 
	$$
	y^{k+1} = x^k + \theta_1(z^{k+1} -z^k) = \theta_1z^{k+1} + \theta_2w^k + (1-\theta_1 -\theta_2)y^k, 
	$$
	we have 
	$$
	\psi(z^{k+1}) \geq \frac{1}{\theta_1}\psi(y^{k+1}) - \frac{\theta_2}{\theta_1}\psi(w^k) - \frac{1-\theta_1 -\theta_2}{\theta_1} \psi(y^k). 
	$$
	
	Hence, we arrive at  
	$$
	f(x^*) \geq \mathbb{E}_k[{\cal Z}^{k+1}] - \frac{{\cal Z}^k}{1 + \eta\sigma} - \frac{1-\theta_1-\theta_2}{\theta_1}P(y^k) + \frac{1}{\theta_1}\mathbb{E}_k[P(y^{k+1})] - \frac{\theta_2}{\theta_1} P(w^k) - \psi(x^*). 
	$$

	After rearranging we get 
	$$
	\mathbb{E}_k[{\cal Z}^{k+1} + {\cal Y}^{k+1}] \leq \frac{{\cal Z}^k}{1 + \eta\sigma} + (1-\theta_1-\theta_2){\cal Y}^k + pq{\cal W}^k. 
	$$
	From Lemma \ref{lm:wk}, we have 
	\begin{eqnarray*}
		\mathbb{E}_k[{\cal Z}^{k+1} + {\cal Y}^{k+1} + {\cal W}^{k+1}] &\leq& \frac{{\cal Z}^k}{1 + \eta\sigma} + (1-\theta_1-\theta_2){\cal Y}^k + pq{\cal W}^k + (1-p){\cal W}^k + \frac{\theta_2}{q}{\cal Y}^k \\
		&=& \frac{1}{1+\eta\sigma}{\cal Z}^k + (1-(\theta_1 + \theta_2 - \frac{\theta_2}{q})){\cal Y}^k + (1 - p(1-q)){\cal W}^k.
	\end{eqnarray*}
	
	Hence, we know $\mathbb{E}[\Psi^k] \leq \epsilon \Psi^0$ as long as 
	\begin{eqnarray*}
		k& \geq& \max\left\{ 1 + \frac{1}{\eta\sigma}, \frac{1}{\theta_1 + \theta_2 - \frac{\theta_2}{q}}, \frac{1}{p(1-q)}  \right\} \log\frac{1}{\epsilon} \\
		&=&  \max\left\{ 1 + \frac{3\theta_1L}{\mu}, \frac{1}{\theta_1 + \theta_2 - \frac{\theta_2}{q}}, \frac{1}{p(1-q)}  \right\} \log\frac{1}{\epsilon}. 
	\end{eqnarray*}

	{\bf Case 1.} Suppose $L_f \leq \frac{{\cal L}_2}{p}$. In this case, $\theta_1 = \min\{ \sqrt{\frac{\mu}{{\cal L}_2p}}\theta_2 , \theta_2 \}$.  Recall that $\theta_2 = \frac{{\cal L}_2}{2L}$, which implies $L = \frac{{\cal L}_2}{2\theta_2}$. Furthermore, 
	\begin{equation}\label{eq:theta2}
	\frac{2}{\theta_2} = \frac{4L}{{\cal L}_2} = 4\frac{\max\{ {\cal L}_2, L_f \}}{{\cal L}_2} \leq \frac{4}{p}. 
	\end{equation}
	
	\vskip 2mm
	
	{\bf Case 1.1.} Suppose $\sqrt{\frac{\mu}{{\cal L}_2p}}  \geq 1$. In this subcase, $\theta_1 = \theta_2$ and $\frac{1}{p} \geq \frac{{\cal L}_2}{\mu}$. By choosing $q = \frac{2}{3}$, we have 
	$$
	\max\left\{ 1 + \frac{3\theta_1L}{\mu}, \frac{1}{\theta_1 + \theta_2 - \frac{\theta_2}{q}}, \frac{1}{p(1-q)}  \right\} \leq \max\left\{ 1+\frac{3{\cal L}_2}{2\mu}, \frac{2}{\theta_2}, \frac{3}{p}  \right\} \overset{(\ref{eq:theta2})}{\leq} \frac{4}{p}.
	$$
	
	{\bf Case 1.2.} Suppose $\sqrt{\frac{\mu}{{\cal L}_2p}}  < 1$. In this subcase, $\theta_1 = \sqrt{\frac{\mu}{{\cal L}_2p}}\theta_2$ and $\frac{1}{p} < \frac{{\cal L}_2}{\mu}$. By choosing $q = 1 - \frac{1}{3}\sqrt{\frac{\mu}{{\cal L}_2p}} \geq \frac{2}{3}$, we have 
	$$
	\frac{1}{\theta_1 + \theta_2 - \frac{\theta_2}{q}}  = \frac{1}{\theta_1 - \theta_2\frac{1-q}{q}} = \frac{1}{\theta_1(1-\frac{1}{3q})} \leq \frac{2}{\theta_2} \overset{(\ref{eq:theta2})}{\leq} \frac{4}{p} \leq 4\sqrt{\frac{{\cal L}_2}{\mu p}}. 
	$$
	Hence 
	$$
	\max\left\{ 1 + \frac{3\theta_1L}{\mu}, \frac{1}{\theta_1 + \theta_2 - \frac{\theta_2}{q}}, \frac{1}{p(1-q)}  \right\} \leq \max\left\{  1+ \frac{3}{2}\sqrt{\frac{{\cal L}_2}{\mu p}}, 4\sqrt{\frac{{\cal L}_2}{\mu p}}, 3\sqrt{\frac{{\cal L}_2}{\mu p}}  \right\} \leq 4\sqrt{\frac{{\cal L}_2}{\mu p}}. 
	$$
	
	{\bf Case 2.} Suppose $L_f > \frac{{\cal L}_2}{p}$. In this case, $\theta_1 = \min\{ \sqrt{\frac{\mu}{L_f}}, \frac{p}{2} \}$, $L = L_f$, and $\theta_2 = \frac{{\cal L}_2}{2L_f} < \frac{p}{2}$. 
	
	\vskip 2mm
	
	{\bf Case 2.1.} Suppose $\sqrt{\frac{\mu}{L_f}} \geq \frac{p}{2}$. In this subcase, $\theta_1 = \frac{p}{2}$. Let $q = \frac{2}{3}$. Then 
	$$
	\frac{1}{\theta_1 + \theta_2 - \frac{\theta_2}{q}}  = \frac{1}{\theta_1 - \theta_2/2} < \frac{4}{p}, 
	$$
	and 
	$$
	\frac{3\theta_1L}{\mu} = \frac{3pL_f}{2\mu} \leq 3\sqrt{\frac{L_f}{\mu}} \leq \frac{6}{p}. 
	$$
	Hence 
	$$
	\max\left\{ 1 + \frac{3\theta_1L}{\mu}, \frac{1}{\theta_1 + \theta_2 - \frac{\theta_2}{q}}, \frac{1}{p(1-q)}  \right\} \leq \max\left\{ 1+ \frac{6}{p}, \frac{4}{p}, \frac{3}{p}  \right\} < \frac{7}{p}. 
	$$
	
	{\bf Case 2.2.} Suppose $\sqrt{\frac{\mu}{L_f}} < \frac{p}{2}$. In this subcase, $\theta_1 = \sqrt{\frac{\mu}{L_f}} $.  Let $q = 1 - \frac{2}{3p}\sqrt{\frac{\mu}{L_f}} > \frac{2}{3}$. Then  
	$$
	\frac{1}{\theta_1 + \theta_2 - \frac{\theta_2}{q}} = \frac{1}{\theta_1 - \theta_2\frac{1-q}{q}} = \frac{1}{\theta_1 - \theta_2\frac{2}{3qp}\sqrt{\frac{\mu}{L_f}}} < \frac{1}{\theta_1 - \frac{\theta_2}{p}\sqrt{\frac{\mu}{L_f}}} < 2\sqrt{\frac{L_f}{\mu}}. 
	$$
	Hence 
	$$
	\max\left\{ 1 + \frac{3\theta_1L}{\mu}, \frac{1}{\theta_1 + \theta_2 - \frac{\theta_2}{q}}, \frac{1}{p(1-q)}  \right\} \leq \max\left\{ 1+ 3\sqrt{\frac{L_f}{\mu}}, 2\sqrt{\frac{L_f}{\mu}}, \frac{3}{2}\sqrt{\frac{L_f}{\mu}}  \right\} \leq 4\sqrt{\frac{L_f}{\mu}}. 
	$$

\section{Non-Strongly Convex Case: proof of Theorem \ref{Th:gconvex1} }

\subsection{Lemmas} 

\begin{lemma}\label{lm:gkgconvex}
	We have 
	$$
	\mathbb{E}_k[\| g^k - \nabla f(x^k)\|^2] \leq 4{\cal L}_1 (f(x^k) - f(x^*) - \langle \nabla f(x^*), x^k - x^*\rangle ) + 2\mathbb{E}_k [{\cal H}^k_S]. 
	$$
\end{lemma}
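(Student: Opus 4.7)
The plan is to write $g^k - \nabla f(x^k)$ as a sum of two conditionally centered random vectors, and then apply the standard variance bound $\mathbb{E}\|X-\mathbb{E}X\|^2\le\mathbb{E}\|X\|^2$ to each piece. Concretely, introduce
\[
U \;\eqdef\; \tfrac{1}{n}({\bf G}(x^k)-{\bf G}(x^*))\theta_{S_k}{\bf I}_{S_k}e,\qquad
V \;\eqdef\; \tfrac{1}{n}({\bf G}(w^k)-{\bf G}(x^*))\theta_{S_k}{\bf I}_{S_k}e.
\]
By Assumption \ref{as:SthetaS}, $\mathbb{E}[\theta_S{\bf I}_S]e=e$, so $\mathbb{E}_k[U]=\nabla f(x^k)-\nabla f(x^*)$ and $\mathbb{E}_k[V]=\nabla f(w^k)-\nabla f(x^*)$. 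From the definition of $g^k$ and the identity $\nabla f(y)=\tfrac{1}{n}{\bf G}(y)e$, a direct rearrangement gives the decomposition
\[
g^k-\nabla f(x^k) \;=\; \bigl(U-\mathbb{E}_k[U]\bigr)\;-\;\bigl(V-\mathbb{E}_k[V]\bigr).
\]

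Next I would apply $\|a-b\|^2\le 2\|a\|^2+2\|b\|^2$, take $\mathbb{E}_k$ of both sides, and use the fact that subtracting the mean can only reduce the second moment, i.e.\ $\mathbb{E}_k\|U-\mathbb{E}_k[U]\|^2\le \mathbb{E}_k\|U\|^2$ and similarly for $V$. This yields
\[
\mathbb{E}_k\|g^k-\nabla f(x^k)\|^2 \;\le\; 2\,\mathbb{E}_k\|U\|^2 + 2\,\mathbb{E}_k\|V\|^2.
\]

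To finish, apply Assumption \ref{as:expsmooth} to the first term, which gives $\mathbb{E}_k\|U\|^2\le 2{\cal L}_1\bigl(f(x^k)-f(x^*)-\langle\nabla f(x^*),x^k-x^*\rangle\bigr)$, and observe that the second term is exactly $\mathbb{E}_k[{\cal H}^k_S]$ by definition of ${\cal H}^k_S$. Combining these two bounds gives the claimed inequality with the correct constants $4{\cal L}_1$ and $2$.

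The only subtlety is choosing the right two-term split: one has to center $U$ around $\nabla f(x^k)-\nabla f(x^*)$ rather than around $0$ so that Assumption \ref{as:expsmooth} (which is anchored at $x^*$) can be invoked cleanly, and the other piece has to match the definition of ${\cal H}^k_S$. Once the decomposition is written, the rest is a routine two-step estimation.
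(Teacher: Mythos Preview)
Your proposal is correct and follows essentially the same route as the paper's own proof: the paper also inserts $\pm\tfrac{1}{n}{\bf G}(x^*)\theta_{S_k}{\bf I}_{S_k}e$ and $\pm(\nabla f(x^k)-\nabla f(x^*))$, splits into the same two centered pieces (your $U-\mathbb{E}_k[U]$ and $V-\mathbb{E}_k[V]$), applies $\|a-b\|^2\le 2\|a\|^2+2\|b\|^2$, drops the mean via $\mathbb{E}\|X-\mathbb{E}X\|^2\le\mathbb{E}\|X\|^2$, and then invokes Assumption~\ref{as:expsmooth} and the definition of ${\cal H}^k_S$. Your $U,V$ notation is a clean repackaging of exactly these steps.
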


\begin{proof}
	
	\begin{eqnarray*}
		&& \mathbb{E}_k[\|g^k - \nabla f(x^k)\|^2] \\ 
		&=& \mathbb{E}\left[ \left\| \frac{1}{n}({\bf G}(x^k) - {\bf G}(w^k)){\theta}_{S_k}{\bf I}_{S_k}e + \frac{1}{n}{\bf G}(w^k)e - \frac{1}{n}{\bf G}(x^k)e\right\|^2  \right] \\ 
		&\leq& 2\mathbb{E}_k\left[ \left\| \frac{1}{n}({\bf G}(x^k)-{\bf G}(x^*){\theta}_{S_k}{\bf I}_{S_k}e) - \frac{1}{n}{\bf G}(x^k)e + \frac{1}{n}{\bf G}(x^*)e  \right\|^2 \right] \\ 
		&& + 2 \mathbb{E}_k\left[ \left\| \frac{1}{n}{\bf G}(w^k)e - \frac{1}{n}{\bf G}(x^*)e - \frac{1}{n}({\bf G}(w^k) - {\bf G}(x^*)){\theta}_{S}{\bf I}_{S}e\right\|^2 \right] \\ 
		&\leq& 2\mathbb{E}_k\left[ \left\| \frac{1}{n}({\bf G}(x^k)-{\bf G}(x^*){\theta}_{S_k}{\bf I}_{S_k}e)\right\|^2 \right] \\ 
		&& + 2 \mathbb{E}_k\left[ \left\| \frac{1}{n}{\bf G}(w^k)e - \frac{1}{n}{\bf G}(x^*)e - \frac{1}{n}({\bf G}(w^k) - {\bf G}(x^*)){\theta}_{S}{\bf I}_{S}e\right\|^2 \right] \\ 
		&\overset{Assumption\ \ref{as:expsmooth}}{\leq}& 4{\cal L}_1  (f(x^k) - f(x^*) - \langle \nabla f(x^*), x^k - x^* \rangle )  \\ 
		&& + 2\mathbb{E}_k\left[ \left\| \frac{1}{n}({\bf G}(w^k) - {\bf G}(x^*)){\theta}_{S}{\bf I}_{S}e \right\|^2 \right] \\ 
		&=& 4{\cal L}_1  (f(x^k) - f(x^*) - \langle \nabla f(x^*), x^k - x^* \rangle ) + 2 \mathbb{E}_k[{\cal H}^k_S].
	\end{eqnarray*}
	
\end{proof}

\begin{lemma}\label{lm:Hk+1gconvex}
	We have 
	$$
	\mathbb{E}_k \left[{\cal H}^{k+1}_S\right] \leq (1-p) \mathbb{E}_k [{{\cal H}^k_S}]+ 2p{\cal L}_1 \left( f(x^k)- f(x^*) - \langle \nabla f(x^*), x-x^* \rangle \right). 
	$$
\end{lemma}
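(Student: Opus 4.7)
The plan is to split $\mathbb{E}_k[\mathcal{H}^{k+1}_S]$ according to the coin flip that determines $w^{k+1}$, and then invoke the expected smoothness assumption (Assumption~\ref{as:expsmooth}) on the branch where $w^{k+1}=x^k$.

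First I would use the tower property together with the independence of the fresh sampling $S$ (used to evaluate $\mathcal{H}^{k+1}_S$) from the coin flip. Conditioning on the value of $w^{k+1}$ gives
\[
\mathbb{E}_k[\mathcal{H}^{k+1}_S] \;=\; p\cdot \mathbb{E}_k\!\left[\left\|\tfrac{1}{n}({\bf G}(x^k) - {\bf G}(x^*)){\theta}_{S}{\bf I}_{S}e\right\|^2\right] \;+\; (1-p)\cdot \mathbb{E}_k\!\left[\left\|\tfrac{1}{n}({\bf G}(w^k) - {\bf G}(x^*)){\theta}_{S}{\bf I}_{S}e\right\|^2\right],
\]
where the first summand corresponds to the event $\{w^{k+1}=x^k\}$ and the second to $\{w^{k+1}=w^k\}$.

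Next, the second summand is by definition exactly $(1-p)\,\mathbb{E}_k[\mathcal{H}^k_S]$, and the first summand can be bounded directly using Assumption~\ref{as:expsmooth} applied at the point $x^k$, yielding
\[
\mathbb{E}_k\!\left[\left\|\tfrac{1}{n}({\bf G}(x^k) - {\bf G}(x^*)){\theta}_{S}{\bf I}_{S}e\right\|^2\right] \;\leq\; 2\mathcal{L}_1\bigl(f(x^k) - f(x^*) - \langle \nabla f(x^*), x^k - x^*\rangle\bigr).
\]
Combining the two bounds gives the claimed inequality.

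There is no real obstacle here; the only subtle point is to make sure that the sampling $S$ used to define $\mathcal{H}^{k+1}_S$ is treated as an \emph{independent} fresh draw (as is the case for all bounds of the form $\mathbb{E}_k[\,\cdot\,]$ in this paper), so that conditioning on the coin flip for $w^{k+1}$ does not interact with the expectation over $S$, and Assumption~\ref{as:expsmooth} applies directly.
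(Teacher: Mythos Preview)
Your proposal is correct and follows exactly the same approach as the paper: the paper simply remarks that the proof is ``similar to Lemma~\ref{lm:Dk+1}'', whose argument is precisely the coin-flip decomposition of $w^{k+1}$ followed by an application of Assumption~\ref{as:expsmooth} on the branch $w^{k+1}=x^k$.
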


\begin{proof}
	
	The proof is similar to Lemma \ref{lm:Dk+1}. 
	
\end{proof}

\begin{lemma}\label{lm:gkxk+1gconvex}
	
	We have 
	$$
	\langle g^k, x^* - x^{k+1} \rangle \geq \psi(x^{k+1}) - \psi(x^*) + \frac{1}{2\eta} \|x^k-x^{k+1}\|^2 + \frac{1}{2\eta}\|x^{k+1}-x^*\|^2 - \frac{1}{2\eta} \|x^k - x^*\|^2. 
	$$
\end{lemma}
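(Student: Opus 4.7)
The plan is to start from the first-order optimality condition of the proximal step defining $x^{k+1}$, and combine it with the standard three-point identity and the subgradient inequality for $\psi$.

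First I would expand the definition $x^{k+1}=\prox_\eta(x^k-\eta g^k)=\arg\min_y\{\tfrac{1}{2\eta}\|x^k-\eta g^k-y\|^2+\psi(y)\}$. Its optimality condition asserts the existence of a subgradient $h\in\partial\psi(x^{k+1})$ with
\[
\tfrac{1}{\eta}(x^{k+1}-x^k+\eta g^k)+h=0,\qquad\text{equivalently}\qquad g^k=\tfrac{1}{\eta}(x^k-x^{k+1})-h.
\]
Substituting this into $\langle g^k,x^*-x^{k+1}\rangle$ yields
\[
\langle g^k,x^*-x^{k+1}\rangle=\tfrac{1}{\eta}\langle x^k-x^{k+1},x^*-x^{k+1}\rangle-\langle h,x^*-x^{k+1}\rangle.
\]

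Next, for the first term I would apply the polarization identity (three-point identity) $2\langle a,b\rangle=\|a\|^2+\|b\|^2-\|a-b\|^2$ with $a=x^k-x^{k+1}$ and $b=x^*-x^{k+1}$, noting $a-b=x^k-x^*$, to obtain
\[
\tfrac{1}{\eta}\langle x^k-x^{k+1},x^*-x^{k+1}\rangle=\tfrac{1}{2\eta}\left(\|x^k-x^{k+1}\|^2+\|x^{k+1}-x^*\|^2-\|x^k-x^*\|^2\right).
\]
For the second term, since $h\in\partial\psi(x^{k+1})$ and $\psi$ is convex, the subgradient inequality gives $\psi(x^*)\geq\psi(x^{k+1})+\langle h,x^*-x^{k+1}\rangle$, hence $-\langle h,x^*-x^{k+1}\rangle\geq\psi(x^{k+1})-\psi(x^*)$. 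Combining these two bounds yields exactly the claimed inequality.

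There is no real obstacle here; this is a textbook proximal-step inequality. The only point of mild care is keeping signs straight in the optimality condition and applying the polarization identity with the correct choice of vectors; the argument does not use anything beyond convexity of $\psi$ (no strong convexity is needed for this version of the lemma).
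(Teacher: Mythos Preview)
Your proof is correct and is the standard self-contained argument for this proximal-step inequality. The paper itself does not give a proof but merely cites Lemma~2.5 in~\cite{Katyusha}; your derivation via the optimality condition of the prox, the three-point identity, and the subgradient inequality for $\psi$ is exactly the argument underlying that cited result.
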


\begin{proof}
	
	This is by Lemma 2.5 in \cite{Katyusha}. 
\end{proof}

\subsection{Proof of Theorem \ref{Th:gconvex1} }

Since $f$ is convex and $\mathbb{E}_k[g^k] = \nabla f(x^k)$, we have 
\begin{eqnarray*}
	f(x^*) &\geq& f(x^k) + \langle \nabla f(x^k), x^* - x^k\rangle \\ 
	&=& f(x^k) + \mathbb{E}_k[ \langle g^k, x^* - x^{k+1} + x^{k+1} -x^k \rangle ] \\ 
	&=& f(x^k) + \mathbb{E}_k [\langle g^k, x^* - x^{k+1} \rangle ] + \mathbb{E}_k[\langle g^k - \nabla f(x^k), x^{k+1}-x^k \rangle] \\ 
	&& + \mathbb{E}_k[\langle \nabla f(x^k), x^{k+1}-x^k \rangle] \\ 
	&\geq& \mathbb{E}_k[f(x^{k+1})] - \frac{L_f}{2}\mathbb{E}_k[\|x^{k+1}-x^k\|^2] + \mathbb{E}_k [\langle g^k, x^* - x^{k+1} \rangle ]  \\ 
	&& + \mathbb{E}_k[\langle g^k - \nabla f(x^k), x^{k+1}-x^k \rangle]   \\ 
	&\geq& \mathbb{E}_k[f(x^{k+1})] - \frac{L_f}{2}\mathbb{E}_k[\|x^{k+1}-x^k\|^2] + \mathbb{E}_k [\langle g^k, x^* - x^{k+1} \rangle ]  \\ 
	&& -\frac{1}{2\beta} \mathbb{E}_k [\|g^k - \nabla f(x^k)\|^2] - \frac{\beta}{2} \mathbb{E}_k[x^{k+1} - x^k\|^2], 
\end{eqnarray*}
where the second inequality comes from that $f$ is $L_f$-smooth and the third inequality comes from Young's inequality. Moreover, from Lemmas  \ref{lm:gkgconvex}  and \ref{lm:gkxk+1gconvex}, we can obtain 
\begin{eqnarray*}
	P(x^*) &\geq& \mathbb{E}_k[P(x^{k+1})] + \left(  \frac{1}{2\eta} - \frac{\beta}{2} - \frac{L_f}{2}  \right) \mathbb{E}_k[\|x^{k+1} - x^k\|^2] + \frac{1}{2\eta} \mathbb{E}_k[\|x^{k+1} - x^*\|^2] \\ 
	&& - \frac{1}{2\eta} \mathbb{E}_k[\|x^k - x^*\|^2] - \frac{2{\cal L}_1}{\beta}(f(x^k) - f(x^*) - \langle \nabla f(x^*), x^k - x^*\rangle ) - \frac{1}{\beta} \mathbb{E}_k[ {\cal H}^k_S ]. 
\end{eqnarray*}

Since $\beta = \frac{5}{6\eta}$ and $\eta \leq \frac{1}{6L_f}$, we have $ \frac{1}{2\eta} - \frac{\beta}{2} - \frac{L_f}{2}  \geq 0$. Therefore, 
\begin{eqnarray*}
	&& \frac{1}{2\eta} \mathbb{E}_k[\|x^{k+1} -x^*\|^2] + \mathbb{E}_k[P(x^{k+1})] - P(x^*) \\ 
	&\leq&  \frac{1}{2\eta} \mathbb{E}_k[\|x^k - x^*\|^2] + \frac{2{\cal L}_1}{\beta}(f(x^k) - f(x^*) - \langle \nabla f(x^*), x^k - x^*\rangle ) + \frac{1}{\beta} \mathbb{E}_k[ {\cal H}^k_S ]. 
\end{eqnarray*}
From Lemma \ref{lm:Hk+1gconvex}, we have 
\begin{eqnarray*}
	&& \mathbb{E}_k[\Psi^{k+1}] + \mathbb{E}_k[P(x^{k+1})] - P(x^*) \\ 
	&\leq&  \frac{1}{2\eta} \mathbb{E}_k[\|x^k - x^*\|^2] + \left( \frac{2{\cal L}_1}{\beta} + 2\alpha p {\cal L}_1 \right)(f(x^k) - f(x^*) - \langle \nabla f(x^*), x^k - x^*\rangle ) \\ 
	&& + \frac{1}{\beta} \mathbb{E}_k[ {\cal H}^k_S ] + \alpha (1-p)\mathbb{E}_k[ {\cal H}^k_S ] \\ 
	&=& \mathbb{E}_k[\Psi^k] + \frac{4{\cal L}_1}{\beta} (f(x^k) - f(x^*) - \langle \nabla f(x^*), x^k - x^*\rangle ) \\ 
	&\leq& \mathbb{E}_k[\Psi^k] + \frac{3}{5}  (f(x^k) - f(x^*) - \langle \nabla f(x^*), x^k - x^*\rangle ) . 
\end{eqnarray*}

Since $x^*$ is an optimal solution, we have $- \nabla f(x^*) \in \partial \psi(x^*)$, which along with the convexity of $\psi$ implies that 
\begin{equation}\label{eq:fP}
f(x^k) - f(x^*) - \langle \nabla f(x^*), x^k - x^*\rangle \leq P(x^k) - P(x^*). 
\end{equation}
Thus we can obtain the first result.


Since $\eta = \frac{1}{8{\cal L}_1}$, we have 
\begin{eqnarray*}
	\mathbb{E}[\Psi^0] &=& \frac{1}{2\eta}\|x^0 - x^*\|^2 + \alpha \mathbb{E}[{\cal H}^0_S] \\
	&=& 4{\cal L}_1 \|x^0 - x^*\|^2 + \frac{3}{20p{\cal L}_1} \mathbb{E}[{\cal H}^0_S] \\ 
	&\overset{Assumption ~\ref{as:expsmooth}}{\leq}& 4{\cal L}_1 \|x^0 - x^*\|^2 + \frac{3}{10p} (f(x^0) - f(x^*) - \langle \nabla f(x^*), x^0 - x^*\rangle ) \\ 
	&\overset{(\ref{eq:fP})}{\leq}& 4{\cal L}_1 \|x^0 - x^*\|^2 + \frac{3}{10p}(P(x^0) - P(x^*)). 
\end{eqnarray*}

From Theorem \ref{Th:gconvex1}, we have 
$$
\mathbb{E}[P(x^i) - P(x^*)] - \frac{3}{5}\mathbb{E}[P(x^{i-1}) - P(x^*)] \leq \mathbb{E}[\Psi^{i-1}] - \mathbb{E}[\Psi^i], 
$$
for $i = 1, ..., k$. Hence 
$$
\mathbb{E}[P(x^k) - P(x^*)] + \frac{2}{5} \sum_{i=0}^{k-1} \mathbb{E}[P(x^i) - P(x^*)] \leq P(x^0) - P(x^*) + \mathbb{E}[\Psi^0] - \mathbb{E}[\Psi^{k}], 
$$
which implies that 
$$
\frac{2}{5} \sum_{i=0}^{k} \mathbb{E}[P(x^i) - P(x^*)] \leq 4{\cal L}_1 \|x^0 - x^*\|^2 + ( 1 + \frac{3}{10p})(P(x^0) - P(x^*)). 
$$
This along with $\mathbb{E}[P({\tilde x}^k) - P(x^*)] \leq \frac{1}{k+1} \sum_{i=0}^{k} \mathbb{E}[P(x^i) - P(x^*)] $ implies the result. 

\section{Nonconvex Case: Proof of Theorem \ref{Th:nonconvex1} }

\subsection{Lemmas}

\begin{lemma}\label{lm:gknonconvex}
	For $g^k$, we have 
	$$
	\mathbb{E}_k [\|g^k\|^2] \leq 2\|\nabla f(x^k) \|^2 + 2{\cal L}_3 \|x^k - w^k\|^2.  
	$$
\end{lemma}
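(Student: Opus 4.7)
The plan is to decompose $g^k$ into its conditional mean $\nabla f(x^k)$ plus a mean-zero noise term, and then bound each piece separately using $\|a+b\|^2 \le 2\|a\|^2+2\|b\|^2$.

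First I would verify that $\mathbb{E}_k[g^k] = \nabla f(x^k)$. From the definition
$$g^k = \tfrac{1}{n}(\mathbf{G}(x^k) - \mathbf{G}(w^k))\theta_{S_k}\mathbf{I}_{S_k} e + \tfrac{1}{n}\mathbf{G}(w^k) e,$$
and Assumption~\ref{as:SthetaS} (which gives $\mathbb{E}[\theta_S \mathbf{I}_S]e = e$), the expectation collapses to $\tfrac{1}{n}\mathbf{G}(x^k)e = \nabla f(x^k)$.

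Next I would write
$$g^k = \nabla f(x^k) + \Big[\tfrac{1}{n}(\mathbf{G}(x^k)-\mathbf{G}(w^k))\theta_{S_k}\mathbf{I}_{S_k}e - \tfrac{1}{n}(\mathbf{G}(x^k)-\mathbf{G}(w^k))e\Big],$$
apply the elementary inequality $\|a+b\|^2 \le 2\|a\|^2 + 2\|b\|^2$, and take conditional expectation. The first term contributes $2\|\nabla f(x^k)\|^2$, and the second term is exactly the quantity appearing on the left-hand side of Assumption~\ref{as:expL3} with $(x,y)=(x^k,w^k)$. Invoking that assumption bounds it by $\mathcal{L}_3 \|x^k - w^k\|^2$, which after multiplication by $2$ yields the stated bound $2\|\nabla f(x^k)\|^2 + 2\mathcal{L}_3 \|x^k - w^k\|^2$.

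There is no real obstacle here: the lemma is essentially a direct application of Assumption~\ref{as:expL3} after the standard bias/variance split, with the only subtle point being to identify the residual of $g^k$ relative to $\nabla f(x^k)$ as exactly the object controlled by the $\mathcal{L}_3$ assumption. The $x^k$ (conditioned upon) plays the role of $x$ in the assumption and $w^k$ (also conditioned upon) plays the role of $y$, so applying the unconditional bound in Assumption~\ref{as:expL3} pointwise is legitimate.
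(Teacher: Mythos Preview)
Your proposal is correct and matches the paper's proof essentially line for line: write $g^k = \nabla f(x^k) + (g^k - \nabla f(x^k))$, apply $\|a+b\|^2 \le 2\|a\|^2 + 2\|b\|^2$, and recognize the residual as the quantity bounded by Assumption~\ref{as:expL3} with $(x,y)=(x^k,w^k)$. The unbiasedness check you include is a helpful sanity step but is not actually required for the argument, since the algebraic identity $g^k - \nabla f(x^k) = \tfrac{1}{n}(\mathbf{G}(x^k)-\mathbf{G}(w^k))\theta_{S_k}\mathbf{I}_{S_k}e - \tfrac{1}{n}(\mathbf{G}(x^k)-\mathbf{G}(w^k))e$ holds regardless.
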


\begin{proof}
	
	\begin{eqnarray*}
		\mathbb{E}_k [\|g^k\|^2] &=& \mathbb{E}_k [\|g^k - \nabla f(x^k) + \nabla f(x^k) \|^2] \\ 
		&\leq& 2\|\nabla f(x^k)\|^2 + 2\mathbb{E}_k[\|g^k - \nabla f(x^k)\|^2] \\
		&\overset{Assumption \ref{as:expL3}}{\leq}& 2 \|\nabla f(x^k)\|^2 + 2{\cal L}_3\|x^k - w^k\|^2. 
	\end{eqnarray*}
	
\end{proof}

\begin{lemma}\label{lm:x-wnonconvex}
	For any $\beta>0$, we have 
	$$
	\mathbb{E}_k[\| x^{k+1} - w^{k+1}\|^2] \leq \eta^2\mathbb{E}_k[\|g^k\|^2] + (1-p)(1+\eta\beta) \|x^k - w^k\|^2 + (1-p)\frac{\eta}{\beta}\|\nabla f(x^k)\|^2. 
	$$
	
\end{lemma}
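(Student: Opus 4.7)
The plan is to split the expectation over the Bernoulli coin flip defining $w^{k+1}$ and then expand the remaining squared norm, handling the cross term via Young's inequality.

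First, I would condition on $x^k$, $w^k$, and $g^k$ (equivalently, on the sampling $S_k$) and take expectation over the independent coin determining $w^{k+1}$. Since $\psi \equiv 0$ in this section, $x^{k+1} = x^k - \eta g^k$, and with probability $p$ we have $w^{k+1} = x^k$ so $\|x^{k+1}-w^{k+1}\|^2 = \eta^2 \|g^k\|^2$, while with probability $1-p$ we have $w^{k+1} = w^k$ so $\|x^{k+1}-w^{k+1}\|^2 = \|x^k - \eta g^k - w^k\|^2$. Averaging and then taking expectation over $S_k$ gives
\[
\mathbb{E}_k[\|x^{k+1}-w^{k+1}\|^2] = p\,\eta^2\,\mathbb{E}_k[\|g^k\|^2] + (1-p)\,\mathbb{E}_k[\|x^k - w^k - \eta g^k\|^2].
\]

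Next, I would expand the second square and use $\mathbb{E}_k[g^k] = \nabla f(x^k)$ to obtain
\[
\mathbb{E}_k[\|x^k - w^k - \eta g^k\|^2] = \|x^k-w^k\|^2 - 2\eta\langle \nabla f(x^k), x^k-w^k\rangle + \eta^2\,\mathbb{E}_k[\|g^k\|^2].
\]
Young's inequality with parameter $\beta>0$ yields
\[
-2\eta\langle \nabla f(x^k), x^k-w^k\rangle \leq \eta\beta\|x^k-w^k\|^2 + \frac{\eta}{\beta}\|\nabla f(x^k)\|^2.
\]
Plugging this in and combining the $p$- and $(1-p)$-terms consolidates the $\mathbb{E}_k[\|g^k\|^2]$ coefficients into a single $\eta^2$, yielding the stated bound.

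There is no real obstacle here: the only modeling choice is ensuring the coin flip is independent of $S_k$ (which is built into the algorithm description) and choosing to handle the cross term with a free parameter $\beta$ rather than absorbing it directly, so that the downstream proof of Theorem~\ref{Th:nonconvex1} can tune $\beta$ against $\alpha$ in the Lyapunov function. The $\mathbb{E}_k[\|g^k\|^2]$ term is then left as-is so that it can subsequently be bounded by Lemma~\ref{lm:gknonconvex}.
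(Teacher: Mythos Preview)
Your proposal is correct and matches the paper's proof essentially step for step: split over the coin flip, expand the squared norm using $\mathbb{E}_k[g^k]=\nabla f(x^k)$, and bound the cross term by Young's inequality with parameter $\beta$.
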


\begin{proof}
	
	\begin{eqnarray*}
		\mathbb{E}_k[\|x^{k+1} - w^{k+1}\|^2] &=& p\mathbb{E}_k[\|x^{k+1} - x^k\|^2] + (1-p)\mathbb{E}_k[\|x^{k+1} -w^k\|^2 ] \\ 
		&=& p\eta^2\mathbb{E}_k[\|g^k\|^2] + (1-p)\mathbb{E}_k[\|x^{k+1} - w^k\|^2]. 
	\end{eqnarray*}
	For $\mathbb{E}_k[\|x^{k+1} - w^k\|^2]$, we have 
	\begin{eqnarray*}
		\mathbb{E}_k[\|x^{k+1} - w^k\|^2] &=& \mathbb{E}_k [ \|x^k - \eta g^k - w^k\|^2] \\
		&=& \|x^k-w^k\|^2 + \eta^2 \mathbb{E}_k [\|g^k\|^2] - 2\eta \mathbb{E}_k[\langle x^k-w^k, \nabla f(x^k) \rangle] \\ 
		&\leq& \|x^k - w^k\|^2 + \eta^2 \mathbb{E}_k[\|g^k\|^2] + \eta \left(  \frac{1}{\beta}\|\nabla f(x^k)\|^2 + \beta \|x^k-w^k\|^2  \right) \\ 
		&=& (1+\eta \beta)\|x^k-w^k\|^2 + \eta^2 \mathbb{E}_k[\|g^k\|^2] + \frac{\eta}{\beta}\|\nabla f(x^k) \|^2,
	\end{eqnarray*}
	where the inequality is from $|2\langle a, b\rangle | \leq \frac{1}{\beta}\|a\|^2 + \beta\|b\|^2$ for any $\beta>0$. 
	Combining all the above results, we can obtain the result. 
	
\end{proof}

\subsection{Proof of Theorem \ref{Th:nonconvex1} }

Since $f$ is $L_f$-smooth, we have 
$$
f(x^{k+1}) \leq f(x^k) + \langle \nabla f(x^k), x^{k+1}-x^k \rangle + \frac{L_f}{2}\|x^{k+1}-x^k\|^2, 
$$
which implies 
$$
\mathbb{E}_k[f(x^{k+1})] \leq f(x^k) - \eta \|\nabla f(x^k)\|^2 + \frac{L_f \eta^2}{2}\mathbb{E}_k[\|g^k\|^2]. 
$$
Hence, we have 
\begin{eqnarray*}
	\mathbb{E}_k[\Psi^{k+1}] &=&\mathbb{E}_k[f(x^{k+1}) + \alpha \|x^{k+1}-w^{k+1}\|^2] \\ 
	&\leq& f(x^k) - \eta \|\nabla f(x^k)\|^2 + \frac{L_f \eta^2}{2}\mathbb{E}_k[\|g^k\|^2] + \alpha \mathbb{E}_k [\|x^{k+1} - w^{k+1}\|^2] \\
	&\overset{Lemma \ref{lm:x-wnonconvex}}{\leq}&  f(x^k) - \eta  \|\nabla f(x^k) \|^2 + \eta^2(\frac{L_f}{2} + \alpha)\mathbb{E}_k[\|g^k\|^2] \\ 
	&& + \alpha(1-p)(1+\eta\beta) \|x^k - w^k\|^2 + \alpha(1-p)\frac{\eta}{\beta}\|\nabla f(x^k)\|^2 \\ 
	&\overset{Lemma \ref{lm:gknonconvex}}{\leq}& f(x^k) - \eta (1 - \frac{\alpha(1-p)}{\beta}) \|\nabla f(x^k) \|^2 + \alpha(1-p)(1+\eta\beta) \|x^k - w^k\|^2 \\ 
	&& + \eta^2 (\frac{L_f}{2} + \alpha) \left(  2\|\nabla f(x^k)\|^2 + 2{\cal L}_3\|x^k - w^k\|^2  \right) \\ 
	&=& f(x^k) - \eta \left(1 - \frac{\alpha(1-p)}{\beta} - L_f\eta - 2\alpha\eta \right) \|\nabla f(x^k)\|^2 \\ 
	&& + \alpha \left(  (1-p)(1+\eta\beta) + \eta^2(\frac{L_f}{\alpha} + 2){\cal L}_3  \right)\|x^k - w^k\|^2. 
\end{eqnarray*}

Since $\alpha = 3\eta^2L_f{\cal L}_3/p$ and $\beta = p/3\eta$, we have 
$$
(1-p)(1+\eta\beta) + \eta^2(\frac{L_f}{\alpha} + 2){\cal L}_3 \leq 1 - \frac{p}{3} + 2{\cal L}_3\eta^2. 
$$
Let 
$$
2{\cal L}_3\eta^2 \leq \frac{p}{3}, \quad \frac{\alpha}{\beta} = \frac{9\eta^3L_f{\cal L}_3}{p^2} \leq \frac{1}{4}, \quad L_f\eta\leq \frac{1}{4}, \quad 2\alpha \eta = \frac{6\eta^3L_f{\cal L}_3}{p} \leq \frac{1}{4}, 
$$
which implies 
$$
\eta \leq \min \left\{ \frac{1}{4L_f}, \frac{p^{\frac{2}{3}} }{36^{\frac{1}{3}}(L_f{\cal L}_3)^{\frac{1}{3}} }, \frac{\sqrt{p}}{\sqrt{6{\cal L}_3}}   \right\}. 
$$
Then $(1-p)(1+\eta\beta) + \eta^2(\frac{L_f}{\alpha} + 2){\cal L}_3 \leq 1$ and $1 - \frac{\alpha(1-p)}{\beta} - L_f\eta - 2\alpha\eta \geq \frac{1}{4}$, which indicate that 
$$
\mathbb{E}_k [\Psi^{k+1}] \leq \Psi^k - \frac{\eta}{4}\|\nabla f(x^k) \|^2.
$$

\subsection{Proof of Corollary \ref{co:nonconvex1} }

If the stepsize $\eta$ satisfies (\ref{eq:etanonconvex}), from Theorem \ref{Th:nonconvex1}, we have 
$$
\mathbb{E}[\|\nabla f(x^k)\|] \leq \frac{4}{\eta} (\mathbb{E}[\Psi^{k}] - \mathbb{E}[\Psi^{k+1}]), 
$$
which implies that 
\begin{eqnarray*}
	\mathbb{E}[\|\nabla f(x^a)\|^2] &=& \frac{1}{k+1} \sum_{i=0}^k \mathbb{E}[\| \nabla f(x^i)\|^2] \\ 
	&\leq& \frac{1}{k+1} \cdot \frac{4}{\eta} \left(  \Psi^0 - \mathbb{E}[\Psi^{k+1}]  \right) \\ 
	&=&  \frac{1}{k+1} \cdot \frac{4}{\eta} \left(  f(x^0) -  \mathbb{E}[f(x^{k+1})] - \alpha \mathbb{E}[\|x^{k+1} -w^{k+1}\|^2]  \right) \\
	&\leq& \frac{4}{\eta}\cdot \frac{f(x^0) - f(x^*)}{k+1}. 
\end{eqnarray*}

\section{Estimation of ${\cal L}_1$ and ${\cal L}_2$ }\label{sec:L1L2}

In this section, we estimate the expected smoothness parameters ${\cal L}_1$ and ${\cal L}_2$ comprehensively. It should be noticed that for Katyusha \cite{Katyusha} in minibatch setting, if we replace ${\bar L}/\tau$ with ${\cal L}_2$, then we can obtain the same result straigtforward by using Lemma \ref{lm:expL2} instead of Lemma D.2 in \cite{Katyusha}. Hence, the estimation of ${\cal L}_2$ implies the convergence result of Katyusha with arbitrary sampling as well.

\subsection{Estimation for sampling $S$ }

Let $f_S \eqdef \frac{1}{n}F{\theta}_S{\bf I}_Se$, and the Lipschitz smoothness constant of $f_S$ be $L_S$. Obviously 
$$
L_S \leq \frac{1}{n} \sum_{i\in S} L_i{\theta}^i_S. 
$$
Let ${\cal L}_{\max} \eqdef \max_{i\in [n]} \sum_{C: i\in C}p_C L_C {\theta}^i_C$. Let ${\bf P}\in \R^{n\times n}$ be defined by ${\bf P}_{ij} = \mathbb{P}[i\in S \ \& \ j\in S]$.  Recall that  
\begin{equation}\label{betai}
\compactify \beta_i = \sum_{C \subseteq[n] : i\in C}p_C|C|(\theta_C^i)^2, \quad i\in [n],
\end{equation}
where $|C|$ is the cardinality of the set $C$. The property of $\beta_i$ can be found in Lemma 3.4 in \cite{SAGA-AS}. Then we have the following lemma. 

\begin{lemma}\label{lm:L1samplinggeneral}
	
	Let $S$ be a proper sampling and $f_i$ be $L_i$-smooth and convex. \\
	(i) The expected smoothness constants ${\cal L}_1$ in Assumption \ref{as:expsmooth} and ${\cal L}_2$ in Assumption \ref{as:expL2} satisfy 
	$$
	{\cal L}_i \leq {\cal L}_{\max} \leq \frac{1}{n}\max_{i\in [n]} \left\{ \sum_{j\in [n]}\sum_{C: i, j\in C}p_C{\theta}^i_C{\theta}^j_C L_j   \right\}, \quad i = 1, 2. 
	$$
	Specifically, if ${\theta}^i_C = \frac{1}{p_i}$ for all $i$ and $C$, then 
	$$
	{\cal L}_i \leq {\cal L}_{\max} \leq  \frac{1}{n}\max_{i\in [n]} \left\{ \sum_{j\in [n]}\frac{{\bf P}_{ij}}{p_ip_j}L_j   \right\}, \quad i = 1, 2. 
	$$
	(ii) We have  
	$$
	{\cal L}_i \leq \frac{1}{n} \max_i \{  L_i \beta_i  \}, \quad i = 1, 2. 
	$$
	
\end{lemma}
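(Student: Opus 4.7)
Both bounds follow from rewriting the quantity inside the expectation as the squared gradient difference of the random function $f_S \eqdef \tfrac{1}{n}\sum_{i\in S}\theta_S^i f_i$, since $\tfrac{1}{n}({\bf G}(x)-{\bf G}(y)){\theta}_S{\bf I}_S e = \nabla f_S(x) - \nabla f_S(y)$. Under Assumption~\ref{as:SthetaS} we have $\mathbb{E}[\nabla f_S(x)] = \nabla f(x)$, and convexity of each $f_i$ together with $\theta_S^i \geq 0$ makes $f_S$ convex with Lipschitz constant $L_S \leq \tfrac{1}{n}\sum_{i\in S}\theta_S^i L_i$. I will use throughout the Bregman notation $D_g(u,v)\eqdef g(u)-g(v)-\langle \nabla g(v), u-v\rangle$, together with the identity $\sum_{i=1}^n D_{f_i}(x,y) = n D_f(x,y)$.

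\textbf{Part (i).} Applying the standard convex-smooth inequality $\|\nabla f_S(x)-\nabla f_S(x^*)\|^2 \leq 2 L_S D_{f_S}(x,x^*)$ and taking expectation, I would expand $f_S$ and $L_S$ and swap the summation order from $\sum_C p_C \sum_{i\in C}$ to $\sum_i \sum_{C\ni i} p_C$. Bounding the inner sum by $\max_i$ and invoking the Bregman identity gives $\cL_1 \leq \cL_{\max}$. The coarser bound on $\cL_{\max}$ comes from inserting $L_C \leq \tfrac{1}{n}\sum_{j\in C}\theta_C^j L_j$ inside its definition and swapping sums once more, which produces the double sum $\sum_j \sum_{C: i,j\in C} p_C \theta_C^i \theta_C^j L_j$; the specialization to $\theta_C^i = 1/p_i$ follows from $\sum_{C:i,j\in C} p_C = {\bf P}_{ij}$. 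For $\cL_2$, the centering identity $\mathbb{E}[\nabla f_S(x)] = \nabla f(x)$ shows that the vector inside the expectation in Assumption~\ref{as:expL2} is centered, so the variance-versus-second-moment inequality $\mathbb{E}\|X-\mathbb{E}X\|^2 \leq \mathbb{E}\|X\|^2$ reduces the analysis to exactly the $\cL_1$ computation with $y$ in place of $x^*$.

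\textbf{Part (ii).} Here I would not use the smoothness of $f_S$ globally but rather apply Cauchy--Schwarz at the level of each realization $C$:
$$\left\|\tfrac{1}{n}\sum_{i\in C}\theta_C^i v_i\right\|^2 \leq \tfrac{|C|}{n^2}\sum_{i\in C}(\theta_C^i)^2\|v_i\|^2,$$
with $v_i = \nabla f_i(x)-\nabla f_i(x^*)$. Taking expectation, swapping the order of summation, and identifying $\beta_i = \sum_{C\ni i} p_C |C| (\theta_C^i)^2$ yields $\tfrac{1}{n^2}\sum_i \beta_i \|v_i\|^2$. The convex $L_i$-smoothness of each $f_i$ then gives $\|v_i\|^2 \leq 2 L_i D_{f_i}(x,x^*)$, after which pulling out $\max_i \{L_i\beta_i\}$ and applying the Bregman identity delivers $\cL_1 \leq \tfrac{1}{n}\max_i\{L_i\beta_i\}$. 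The bound on $\cL_2$ follows identically via the same centering step used in part (i).

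\textbf{Expected obstacle.} No step is genuinely hard; the main care required is bookkeeping: verifying that $f_S$ inherits convexity from the nonnegativity of $\theta_S^i$, tracking the factors of $1/n$ throughout, and justifying the interchange $\sum_C p_C \sum_{i\in C} = \sum_i \sum_{C\ni i} p_C$. The one conceptually non-routine point is the centering observation that reduces Assumption~\ref{as:expL2} to the same computation as Assumption~\ref{as:expsmooth}; otherwise the lemma is essentially a careful double-sum manipulation.
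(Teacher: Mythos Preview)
Your proposal is correct and follows essentially the same approach as the paper: in part (i) you apply the convex-smooth inequality to $f_S$, expand, swap sums, and bound by $\cL_{\max}$, then substitute $L_C\le\tfrac{1}{n}\sum_{j\in C}\theta_C^j L_j$; in part (ii) you use Cauchy--Schwarz on each realization to produce $\beta_i$, then apply individual smoothness; and in both parts you handle $\cL_2$ via the variance-versus-second-moment inequality $\mathbb{E}\|X-\mathbb{E}X\|^2\le\mathbb{E}\|X\|^2$. The only addition you make beyond the paper is explicitly noting that $\theta_S^i\ge 0$ is needed for $f_S$ to be convex, which the paper uses but leaves implicit.
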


\begin{proof}
	
	(i) From 
	$$
	\|\nabla f_S(x) - \nabla f_S(y)\|^2 \leq 2L_S (f_S(x) - f_S(y) - \langle \nabla f_S(y), x-y \rangle ),
	$$
	we have 
	\begin{eqnarray*}
		\mathbb{E}\left[\left\|\frac{1}{n}({\bf G}(x) - {\bf G}(y)){\theta}_{S}{\bf I}_{S}e \right\|^2 \right] &\leq& 2\sum_{C} p_C L_C(f_C(x) - f_C(y) - \langle \nabla f_C(y), x-y \rangle ) \\ 
		&=& 2\sum_{C} p_C L_C \sum_{i\in C} \frac{{\theta}^i_C}{n}(f_i(x) - f_i(y) - \langle \nabla f_i(y), x-y \rangle ) \\ 
		&=& \frac{2}{n} \sum_{i\in [n]} \sum_{C: i\in C}p_C L_C {\theta}^i_C (f_i(x) - f_i(y) - \langle \nabla f_i(y), x-y \rangle ) \\ 
		&\leq& \frac{2}{n}{\cal L}_{\max} \sum_{i\in [n]} (f_i(x) - f_i(y) - \langle \nabla f_i(y), x-y \rangle ) \\ 
		&=& 2{\cal L}_{\max} (f(x) - f(y) - \langle \nabla f(y), x-y \rangle ). 
	\end{eqnarray*}
	Hence ${\cal L}_1 \leq {\cal L}_{\max} = \max_{i\in [n]} \sum_{C: i\in C}p_C L_C {\theta}^i_C$. For all $i$, since $L_S \leq \frac{1}{n} \sum_{i\in S} L_i{\theta}^i_S$,  we have 
	\begin{eqnarray*}
		\sum_{C: i\in C}p_C L_C{\theta}^i_C &\leq& \sum_{C: i\in C}p_C{\theta}^i_C \frac{1}{n} \sum_{j\in C}L_j{\theta}^j_C \\ 
		&=& \frac{1}{n} \sum_{j\in [n]}\sum_{C: i, j\in C}p_C{\theta}^i_C{\theta}^j_C L_j, 
	\end{eqnarray*}
	which implies 
	$$
	{\cal L}_{\max} \leq \frac{1}{n}\max_{i\in [n]} \left\{ \sum_{j\in [n]}\sum_{C: i, j\in C}p_C{\theta}^i_C{\theta}^j_C L_j   \right\}. 
	$$
	Furthermore, if ${\theta}^i_C = \frac{1}{p_i}$ for all $i$ and $C$, then 
	$$
	{\cal L}_{\max} \leq \frac{1}{n}\max_{i\in [n]} \left\{ \sum_{j\in [n]}\sum_{C: i, j\in C}p_C{\theta}^i_C{\theta}^j_C L_j   \right\} = \frac{1}{n}\max_{i\in [n]} \left\{ \sum_{j\in [n]}\frac{{\bf P}_{ij}}{p_ip_j}L_j   \right\}. 
	$$
	
	\noindent For ${\cal L}_2$, since $\mathbb{E}[\|X - \mathbb{E}[X] \|^2] \leq \mathbb{E}[\| X \|^2]$, we have 
	\begin{equation}\label{eq:boundL2}
	\mathbb{E}\left[\left\|\frac{1}{n}({\bf G}(x) - {\bf G}(y)){\theta}_{S}{\bf I}_{S}e - \frac{1}{n}({\bf G}(x) - {\bf G}(y))e\right\|^2 \right] \leq \mathbb{E}\left[\left\|\frac{1}{n}({\bf G}(x) - {\bf G}(y)){\theta}_{S}{\bf I}_{S}e \right\|^2 \right]. 
	\end{equation}
	Then we get the same upper bound for ${\cal L}_2$.

	(ii) From the definition of $\beta_i$, we have 
	\begin{eqnarray}
	\mathbb{E}\left[\left\|\frac{1}{n}({\bf G}(x) - {\bf G}(y)){\theta}_{S}{\bf I}_{S}e \right\|^2 \right] &=& \sum_{C} p_C\left \|  \frac{1}{n}({\bf G}(x) - {\bf G}(y)){\theta}_{C}{\bf I}_{C}e \right\|^2 \nonumber \\ 
	&=&\frac{1}{n^2} \sum_C p_C \left \| \sum_{i\in C} \theta^i_C (\nabla f_i(x) - \nabla f_i(y)) \right \|^2 \nonumber \\ 
	&\leq& \frac{1}{n^2} \sum_C p_C |C| \sum_{i\in C} (\theta^i_C)^2 \|\nabla f_i(x) - \nabla f_i(y) \|^2 \nonumber \\ 
	&=& \frac{1}{n^2} \sum_{i=1}^n \sum_{C: i\in C} p_C |C| (\theta^i_C)^2 \|\nabla f_i(x) - \nabla f_i(y) \|^2\nonumber \\ 
	&=& \frac{1}{n^2} \sum_{i=1}^n \beta_i \|\nabla f_i(x) - \nabla f_i(y) \|^2 \label{eq:L12beta}\\ 
	&\leq& \frac{1}{n^2} \sum_{i=1}^n 2L_i \beta_i (f_i(x) - f_i(y) - \langle \nabla f_i(y), x-y \rangle ) \nonumber \\ 
	&\leq& \frac{2}{n} \max_i\{ L_i \beta_i  \} (f(x) - f(y) - \langle \nabla f(y), x-y \rangle ). \nonumber 
	\end{eqnarray}
	This along with (\ref{eq:boundL2}) implies the results. 
	
\end{proof}

\begin{lemma}\label{lm:L1samplingnsgs}
	Let $S$ be a proper sampling, $\theta^i_S = 1/p_i$, $f_i$ be $L_i$-smooth and convex, and $f$ be $L_f$-smooth and convex. \\
	(i) For $\tau$-nice sampling $S$, the expected smoothness constant ${\cal L}_1$ in Assumption \ref{as:expsmooth} satisfies 
	$$
	{\cal L}_1 \leq \frac{n(\tau -1)}{\tau (n-1)} L_f + \frac{n-\tau}{\tau (n-1)} \max_i \{ L_i  \}. 
	$$
	(ii) For group sampling $S$, denote the isolated index set as ${\cal I}$, then we have 
	$$
	{\cal L}_1 \leq L_f + \frac{1}{n} \max\left\{  \max_{i\in {\cal I}} (\frac{1}{p_i} - 1)L_i ,\  \max_{i \notin {\cal I}} \frac{L_i}{p_i} \right\}. 
	$$
\end{lemma}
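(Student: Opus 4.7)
The plan is to bound the quantity $\mathbb{E}\bigl\|\tfrac{1}{n}({\bf G}(x)-{\bf G}(x^*)){\theta}_S{\bf I}_S e\bigr\|^2$ directly, rather than feeding into Lemma~\ref{lm:L1samplinggeneral}. The reason is that the general bound $L_C \leq \tfrac{1}{n}\sum_{i\in C}L_i \theta^i_C$ used there would produce constants scaled by $\bar L \eqdef \tfrac{1}{n}\sum_i L_i$ rather than the tighter $L_f$ that appears in both claims. The key organizing idea is to isolate two quantities: $\|\sum_i z_i\|^2$, where $z_i \eqdef \nabla f_i(x)-\nabla f_i(x^*)$ (this equals $n^2\|\nabla f(x)-\nabla f(x^*)\|^2$ and can be bounded via $L_f$-smoothness), and $\sum_i \|z_i\|^2$ (which can be bounded via individual $L_i$-smoothness).

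For part~(i), I would exploit the fact that for $\tau$-nice sampling $p_i = \tau/n$ and $\Prob[i,j\in S] = \tau(\tau-1)/(n(n-1))$ for $i\neq j$. With $\theta^i_S = 1/p_i = n/\tau$, the quantity of interest reduces to $(1/\tau)\sum_{i\in S}z_i$. Expanding the square and taking expectation gives a diagonal contribution $\tfrac{\tau}{n}\sum_i\|z_i\|^2$ and an off-diagonal contribution $\tfrac{\tau(\tau-1)}{n(n-1)}\sum_{i\neq j}\langle z_i,z_j\rangle$. Using the algebraic identity $\sum_{i\neq j}\langle z_i,z_j\rangle = \|\sum_i z_i\|^2 - \sum_i\|z_i\|^2$ rewrites everything in the two target quantities with explicit coefficients $\tfrac{n-\tau}{\tau n(n-1)}$ on $\sum_i\|z_i\|^2$ and $\tfrac{\tau-1}{\tau n(n-1)}$ on $\|\sum_i z_i\|^2$. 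Applying $\|\sum_i z_i\|^2 \leq 2n^2 L_f\bigl(f(x)-f(x^*)-\langle\nabla f(x^*),x-x^*\rangle\bigr)$ and $\sum_i\|z_i\|^2 \leq 2\max_i L_i\sum_i\bigl(f_i(x)-f_i(x^*)-\langle\nabla f_i(x^*),x-x^*\rangle\bigr)$ then yields the stated combination.

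For part~(ii), the plan is to use that groups are independent and at most one index is selected per group. Decompose $\sum_{i\in S}\tfrac{z_i}{p_i} = \sum_{j=1}^t Y_j$ where $Y_j = z_i/p_i$ if index $i\in C_j$ is selected and $0$ otherwise. A direct computation gives $\mathbb{E}[Y_j] = \sum_{i\in C_j}z_i$ and $\mathbb{E}\|Y_j\|^2 = \sum_{i\in C_j}\|z_i\|^2/p_i$. By independence across groups, $\mathbb{E}\|\sum_j Y_j\|^2 = \sum_i\|z_i\|^2/p_i + \sum_{j_1\neq j_2}\langle\mathbb{E}[Y_{j_1}],\mathbb{E}[Y_{j_2}]\rangle$, and the cross term equals $\|\sum_i z_i\|^2 - \sum_j\|\sum_{i\in C_j}z_i\|^2$. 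For isolated $i\in{\cal I}$ this last subtracted term is exactly $\|z_i\|^2$, combining with the diagonal to give $(1/p_i-1)\|z_i\|^2$; for non-isolated groups, I would simply drop the non-negative $\|\sum_{i\in C_j}z_i\|^2$ terms, leaving an upper bound. Bounding $\|z_i\|^2$ and $\|\sum_i z_i\|^2$ exactly as in part~(i) produces the $L_f$ term together with the $\tfrac{1}{n}\max\{\ldots\}$ term.

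I expect the main subtlety to be in part~(ii), where care is needed in the independence decomposition (verifying that groups truly act independently in the construction) and in the bookkeeping that distinguishes isolated from non-isolated indices — in particular, making sure that the dropped non-negative terms do not inflate the $L_f$ coefficient beyond $1$. Part~(i), by contrast, is essentially a mechanical computation once the $z_i$-decomposition is in hand.
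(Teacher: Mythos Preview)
Your proposal is correct and follows essentially the same route as the paper: for part~(i) the paper simply cites an external reference, but your direct expansion via the diagonal/off-diagonal split and the identity $\sum_{i\neq j}\langle z_i,z_j\rangle = \|\sum_i z_i\|^2 - \sum_i\|z_i\|^2$ is exactly the standard computation behind that citation. For part~(ii), your decomposition into independent per-group contributions $Y_j$ is equivalent to the paper's direct use of the probability matrix $\mathbf{P}_{ij}$ (with $\mathbf{P}_{ij}=p_ip_j$ across groups and $\mathbf{P}_{ij}=0$ for $i\neq j$ in the same group), and both arrive at the identical key identity $\mathbb{E}\bigl\|\sum_{i\in S} z_i/p_i\bigr\|^2 = \|\sum_i z_i\|^2 - \sum_j\|\sum_{i\in C_j}z_i\|^2 + \sum_i \|z_i\|^2/p_i$ before applying the smoothness bounds.
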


\begin{proof}
	(i) This is Proposition 3.8(ii) in \cite{SGDAS}. 
	
	\vskip 2mm
	
	\noindent (ii) Since $f_i$ is $L_i$-smooth and convex, and $f$ is $L_f$-smooth and convex, we have 
	\begin{equation}\label{eq:fismooth}
	\| \nabla f_i(x) - \nabla f_i(y) \|^2 \leq 2L_i (f_i(x) - f_i(y) - \langle \nabla f_i(y), x-y \rangle ), 
	\end{equation}
	and 
	\begin{equation}\label{eq:fsmooth}
	\| \nabla f(x) - \nabla f(y) \|^2 \leq 2L_f (f(x) - f(y) - \langle \nabla f(y), x-y \rangle ). 
	\end{equation}
	
	From ${\theta}^i_S = \frac{1}{p_i}$, we have 
	\begin{eqnarray*}
		&& \mathbb{E}\left[\left\|\frac{1}{n}({\bf G}(x) - {\bf G}(y)){\theta}_{S}{\bf I}_{S}e \right\|^2 \right] \\ 
		&=& \frac{1}{n^2} \mathbb{E} \left[  \|\sum_{i \in S} \frac{1}{p_i} (\nabla f_i(x) - \nabla f_i(y))\|^2  \right] \\ 
		&=& \frac{1}{n^2} \sum_C p_C \left \langle \sum_{i \in C} \frac{1}{p_i} (\nabla f_i(x) - \nabla f_i(y)), \sum_{i \in C} \frac{1}{p_i} (\nabla f_i(x) - \nabla f_i(y)) \right \rangle \\ 
		&=& \frac{1}{n^2} \sum_C p_C \sum_{i, j \in C} \left \langle  \frac{1}{p_i} (\nabla f_i(x) - \nabla f_i(y)), \frac{1}{p_j} (\nabla f_j(x) - \nabla f_j(y)) \right \rangle \\ 
		&=& \frac{1}{n^2} \sum_{i, j=1}^n \sum_{C: i, j \in C} p_C \left \langle  \frac{1}{p_i} (\nabla f_i(x) - \nabla f_i(y)), \frac{1}{p_j} (\nabla f_j(x) - \nabla f_j(y)) \right \rangle \\ 
		&=& \frac{1}{n^2} \sum_{i, j=1}^n  \frac{{\bf P}_{ij}}{p_ip_j} \left \langle  (\nabla f_i(x) - \nabla f_i(y)), (\nabla f_j(x) - \nabla f_j(y)) \right \rangle. 
	\end{eqnarray*}
	
	For group sampling, we have ${\bf P}_{ij} = 0$ if $i, j$ are in the same group, and ${\bf P}_{ij} = p_ip_j$ if $i, j$ are in different groups. Assume $S$ have $t$ groups $C_j$ with $j=1, ..., t$, and denote ${\cal I}$ as the isolated index set. Then we have 
	\begin{eqnarray}
	&& \mathbb{E}\left[\left\|\frac{1}{n}({\bf G}(x) - {\bf G}(y)){\theta}_{S}{\bf I}_{S}e \right\|^2 \right] \nonumber \\ 
	&=& \frac{1}{n^2} \sum_{i, j=1}^n  \frac{{\bf P}_{ij}}{p_ip_j} \left \langle  (\nabla f_i(x) - \nabla f_i(y)), (\nabla f_j(x) - \nabla f_j(y)) \right \rangle \nonumber \\ 
	&=& \frac{1}{n^2} \sum_{i\neq j}  \frac{{\bf P}_{ij}}{p_ip_j} \left \langle  (\nabla f_i(x) - \nabla f_i(y)), (\nabla f_j(x) - \nabla f_j(y)) \right \rangle \nonumber \\ 
	&& + \frac{1}{n^2} \sum_{i=1}^n \frac{1}{p_i} \|\nabla f_i(x) - \nabla f_i(y)\|^2 \nonumber \\ 
	&=&  \| \nabla f(x) - \nabla f(y)\|^2 - \frac{1}{n^2} \sum_{j =1}^t \left \| \sum_{i \in C_j} (\nabla f_i(x) - \nabla f_i(y)) \right \|^2 \nonumber \\ 
	&& + \frac{1}{n^2} \sum_{i=1}^n \frac{1}{p_i} \|\nabla f_i(x) - \nabla f_i(y)\|^2  \label{eq:groupsL} \\
	&\overset{{(\ref{eq:fsmooth})}}{\leq}& 2L_f(f(x) - f(y) - \langle \nabla f(y), x-y \rangle ) \nonumber \\ 
	&& + \frac{1}{n^2} \sum_{i\in {\cal I}} (\frac{1}{p_i} - 1)\|\nabla f_i(x) - \nabla f_i(y)\|^2 + \frac{1}{n^2} \sum_{i \notin {\cal I}}\frac{1}{p_i} \|\nabla f_i(x) - \nabla f_i(y)\|^2 \nonumber \\ 
	&\overset{(\ref{eq:fismooth})}{\leq}&  2L_f(f(x) - f(y) - \langle \nabla f(y), x-y \rangle ) \nonumber \\ 
	&& + \frac{2}{n} \max\left\{  \max_{i\in {\cal I}} (\frac{1}{p_i} - 1)L_i ,\  \max_{i \notin {\cal I}} \frac{L_i}{p_i} \right\} (f(x) - f(y) - \langle \nabla f(y), x-y \rangle ). \nonumber
	\end{eqnarray}
	
\end{proof}

\begin{lemma}\label{lm:L2samplingnsgs}
	Let $S$ be a proper sampling, $\theta^i_S = 1/p_i$, $f_i$ be $L_i$-smooth and convex, and $f$ be $L_f$-smooth and convex. \\
	(i) For $\tau$-nice sampling $S$, the ${\cal L}_2$ in Assumption \ref{as:expL2} satisfies 
	$$
	{\cal L}_2 \leq  \frac{n-\tau}{\tau (n-1)} \max_i \{ L_i  \}. 
	$$
	(ii) For group sampling $S$, denote the isolated index set as ${\cal I}$, then we have 
	$$
	{\cal L}_2 \leq  \frac{1}{n} \max\left\{  \max_{i\in {\cal I}} (\frac{1}{p_i} - 1)L_i ,\  \max_{i \notin {\cal I}} \frac{L_i}{p_i} \right\}. 
	$$
\end{lemma}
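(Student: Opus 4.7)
The plan is to derive ${\cal L}_2$ directly from the computations already performed in the proof of Lemma~\ref{lm:L1samplingnsgs}, exploiting the identity
$$
\mathbb{E}\!\left[\|X - \mathbb{E}[X]\|^2\right] = \mathbb{E}[\|X\|^2] - \|\mathbb{E}[X]\|^2,
$$
applied with $X = \tfrac{1}{n}({\bf G}(x)-{\bf G}(y))\theta_S {\bf I}_S e$ and $\mathbb{E}[X] = \tfrac{1}{n}({\bf G}(x)-{\bf G}(y))e = \nabla f(x)-\nabla f(y)$ under Assumption~\ref{as:SthetaS}. Unlike the crude bound used for ${\cal L}_1$ (where we dropped the $-\|\mathbb{E}[X]\|^2$ term to get inequality~\eqref{eq:boundL2}), here we keep it, which will cancel the $L_f$ contribution and leave only a variance term depending on the individual $L_i$.

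For part (ii), group sampling, I will plug in the exact identity~\eqref{eq:groupsL} already established in the proof of Lemma~\ref{lm:L1samplingnsgs}(ii), namely
$$
\mathbb{E}\!\left[\|X\|^2\right] = \|\nabla f(x)-\nabla f(y)\|^2 - \frac{1}{n^2}\sum_{j=1}^t\Bigl\|\sum_{i\in C_j}(\nabla f_i(x)-\nabla f_i(y))\Bigr\|^2 + \frac{1}{n^2}\sum_{i=1}^n \frac{1}{p_i}\|\nabla f_i(x)-\nabla f_i(y)\|^2.
$$
Subtracting $\|\nabla f(x)-\nabla f(y)\|^2$ eliminates the first term. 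For an isolated index $i\in{\cal I}$ the group $C_j=\{i\}$ contributes $-\|\nabla f_i(x)-\nabla f_i(y)\|^2$, which combines with $\tfrac{1}{p_i}\|\nabla f_i(x)-\nabla f_i(y)\|^2$ to yield $(\tfrac{1}{p_i}-1)\|\nabla f_i(x)-\nabla f_i(y)\|^2$. For non-isolated indices I simply drop the negative group-norm term (upper bound by 0). Then applying $L_i$-smoothness~\eqref{eq:fismooth} termwise and bounding by the maximum multiplier yields
$$
\tfrac{2}{n}\max\!\Bigl\{\max_{i\in{\cal I}}(\tfrac{1}{p_i}-1)L_i,\ \max_{i\notin{\cal I}}\tfrac{L_i}{p_i}\Bigr\}\bigl(f(x)-f(y)-\langle\nabla f(y),x-y\rangle\bigr).
$$

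For part (i), $\tau$-nice sampling, the same strategy applies but uses the explicit values ${\bf P}_{ij}=\tfrac{\tau(\tau-1)}{n(n-1)}$ for $i\neq j$ and ${\bf P}_{ii}=\tfrac{\tau}{n}$. Substituting into the identity
$$
\mathbb{E}[\|X\|^2] = \frac{1}{n^2}\sum_{i,j}\frac{{\bf P}_{ij}}{p_ip_j}\langle \nabla f_i(x)-\nabla f_i(y),\nabla f_j(x)-\nabla f_j(y)\rangle
$$
decomposes into the off-diagonal part $\tfrac{n(\tau-1)}{\tau(n-1)}\|\nabla f(x)-\nabla f(y)\|^2$ plus the diagonal correction $\tfrac{n-\tau}{n\tau(n-1)}\sum_i\|\nabla f_i(x)-\nabla f_i(y)\|^2$. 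Subtracting $\|\nabla f(x)-\nabla f(y)\|^2$ makes the off-diagonal coefficient $-\tfrac{n-\tau}{\tau(n-1)}\leq 0$, which I discard. What remains is bounded by $L_i$-smoothness of each $f_i$, giving the stated factor $\tfrac{n-\tau}{\tau(n-1)}\max_i L_i$.

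There is no real obstacle here: all the combinatorial work has already been done for Lemma~\ref{lm:L1samplingnsgs}, and the only new ingredient is the centering identity. The one point that needs a moment of care is keeping track of signs when subtracting $\|\nabla f(x)-\nabla f(y)\|^2$ and verifying that the negative terms can safely be dropped (which they can, since they are non-positive). This is precisely what produces the cleaner bounds without $L_f$ dependence that distinguish ${\cal L}_2$ from ${\cal L}_1$.
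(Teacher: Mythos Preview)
Your proposal is correct and follows essentially the same approach as the paper: the paper's proof simply records the centering identity $\mathbb{E}[\|X-\mathbb{E}[X]\|^2]=\mathbb{E}[\|X\|^2]-\|\mathbb{E}[X]\|^2$ (equation~\eqref{eq:forL2-1}) and then says ``similar to the proof of Lemma~\ref{lm:L1samplingnsgs}'', which is exactly what you spell out. Your slightly more detailed write-up for part~(i) --- tracking the coefficient $\tfrac{n(\tau-1)}{\tau(n-1)}-1=-\tfrac{n-\tau}{\tau(n-1)}\le 0$ and discarding it --- is the intended argument.
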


\begin{proof}
	Noticing that $\mathbb{E}[\| X - \mathbb{E}[X] \|^2] = \mathbb{E}[\|X \|^2] - \| \mathbb{E}[X] \|^2$, we have 
	\begin{eqnarray}\label{eq:forL2-1}
	&& \mathbb{E}\left[\left\|\frac{1}{n}({\bf G}(x) - {\bf G}(y)){\theta}_{S}{\bf I}_{S}e - \frac{1}{n}({\bf G}(x) - {\bf G}(y))e\right\|^2 \right] \nonumber \\ 
	&=& \mathbb{E}\left[\left\|\frac{1}{n}({\bf G}(x) - {\bf G}(y)){\theta}_{S}{\bf I}_{S}e \right\|^2 \right] - \left \| \frac{1}{n}({\bf G}(x) - {\bf G}(y))e \right \|^2 \nonumber \\ 
	&=&   \mathbb{E}\left[\left\|\frac{1}{n}({\bf G}(x) - {\bf G}(y)){\theta}_{S}{\bf I}_{S}e \right\|^2 \right] - \left \| \nabla f(x) - \nabla f(y) \right \|^2. 
	\end{eqnarray}
	
	Then similar to the proof of Lemma \ref{lm:L1samplingnsgs}, we can obtain the results.
	
\end{proof}

Consider $f_i(x) = \phi_i({\bf A}_i^\top x)$, where ${\bf A}_i \in{ \mathbb{R}}^{d\times m}$, $\phi_i: {\mathbb{R}}^m \to \mathbb{R}$ is $1/\gamma$-smooth and convex. 

\vskip 2mm

The parameters $v_1, ..., v_n$ are assumed to satisfy the following expected separable overapproximation (ESO) inequality, which needs to hold for all $h_i\in \R^m$:
\begin{equation}\label{eq:ESOfirst}
\compactify \mathbb{E}_S\left[ \left\|\sum_{i\in S} {\bf A}_ih_i \right\|^2 \right] \leq \sum_{i=1}^n p_iv_i\|h_i\|^2.
\end{equation}

\begin{lemma}\label{lm:phiexp} If $\phi_i$ is $1/\gamma$-smooth and convex, then for any $x, y\in \R^d$, we have 
	$$
	\left\| \nabla\phi_i({\bf A}_i^\top x)-\nabla \phi_i({\bf A}_i^\top y)\right\|^2 \leq \frac{2}{\gamma} \left( f_i(x) - f_i(y) - \langle \nabla f_i(y), x-y \rangle  \right)
	$$
\end{lemma}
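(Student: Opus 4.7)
The plan is to apply a standard Bregman-divergence lower bound for smooth convex functions to $\phi_i$ at the points $\mathbf{A}_i^\top x$ and $\mathbf{A}_i^\top y$, and then translate the result back to $f_i$ via the chain rule. Since $\phi_i$ is $1/\gamma$-smooth and convex, the well-known consequence of smoothness plus convexity (equivalent to co-coercivity of the gradient) gives, for all $u, v \in \R^m$,
$$
\phi_i(u) - \phi_i(v) - \langle \nabla \phi_i(v), u - v \rangle \geq \frac{\gamma}{2} \left\| \nabla \phi_i(u) - \nabla \phi_i(v) \right\|^2.
$$
I would quote this as a standard fact (it is the $L$-smooth-and-convex Bregman bound with $L = 1/\gamma$), or briefly derive it from the descent lemma applied to the dual step $v \mapsto v + \gamma(\nabla \phi_i(u) - \nabla \phi_i(v))$.

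Next I would instantiate with $u = \mathbf{A}_i^\top x$ and $v = \mathbf{A}_i^\top y$. The right-hand side of the resulting inequality is already $\tfrac{\gamma}{2}\|\nabla \phi_i(\mathbf{A}_i^\top x) - \nabla \phi_i(\mathbf{A}_i^\top y)\|^2$, which is $\tfrac{\gamma}{2}$ times the quantity we want to bound. For the left-hand side, the definition $f_i(x) = \phi_i(\mathbf{A}_i^\top x)$ immediately gives $\phi_i(\mathbf{A}_i^\top x) - \phi_i(\mathbf{A}_i^\top y) = f_i(x) - f_i(y)$, and the chain rule gives $\nabla f_i(y) = \mathbf{A}_i \nabla \phi_i(\mathbf{A}_i^\top y)$, so
$$
\langle \nabla \phi_i(\mathbf{A}_i^\top y), \mathbf{A}_i^\top(x-y) \rangle = \langle \mathbf{A}_i \nabla \phi_i(\mathbf{A}_i^\top y), x - y \rangle = \langle \nabla f_i(y), x - y \rangle.
$$
Substituting and dividing by $\gamma/2$ yields exactly the claimed inequality.

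There is essentially no obstacle here; the only thing to be careful about is invoking the right characterization of smoothness (the gradient-Bregman lower bound, not merely the descent-lemma upper bound), and correctly handling the chain-rule identity so that the inner product $\langle \nabla \phi_i(\mathbf{A}_i^\top y), \mathbf{A}_i^\top(x-y)\rangle$ rewrites as $\langle \nabla f_i(y), x-y\rangle$. Everything else is bookkeeping.
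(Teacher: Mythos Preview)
Your proposal is correct and follows essentially the same approach as the paper: invoke the standard co-coercivity inequality for a $1/\gamma$-smooth convex function at the points $\mathbf{A}_i^\top x$ and $\mathbf{A}_i^\top y$, then use the chain rule $\nabla f_i(y)=\mathbf{A}_i\nabla\phi_i(\mathbf{A}_i^\top y)$ to rewrite the Bregman term in terms of $f_i$. The paper's proof is exactly this two-line argument.
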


\begin{proof}
	Since $\phi_i$ is $1/\gamma$-smooth, we have 
	$$
	\left\| \nabla \phi_i(\tilde x) - \nabla \phi_i(\tilde y)\right\|^2 \leq \frac{2}{\gamma} \left(  \phi_i(\tilde x) - \phi_i(\tilde y) - \langle \nabla \phi_i(\tilde y), {\tilde x} - {\tilde y}  \right). 
	$$
	Letting $\tilde x = {\bf A}_i^\top x$, and $\tilde y = {\bf A}_i^\top y$ in the above inequlity yields 
	\begin{eqnarray*}
		\left\| \nabla \phi_i({\bf A}_i^\top x) - \nabla \phi_i({\bf A}_i^\top y)\right\|^2 &\leq&  \frac{2}{\gamma} \left(  \phi_i({\bf A}_i^\top x) - \phi_i({\bf A}_i^\top y) - \langle \nabla \phi_i({\bf A}_i^\top y ), {{\bf A}_i^\top x} - {{\bf A}_i^\top y}  \right) \\
		&=& \frac{2}{\gamma} \left( f_i(x) - f_i(y) - \langle \nabla f_i(y), x-y \rangle  \right)
	\end{eqnarray*}
	
\end{proof}

\begin{lemma}\label{lm:esoexp}
	Let $S$ be a proper sampling, and $\theta^i_S = 1/p_i$. If the ESO inequality (\ref{eq:ESOfirst}) holds, then the expected smoothness constants ${\cal L}_1$ in Assumption \ref{as:expsmooth} and ${\cal L}_2$ in Assumption \ref{as:expL2} satisfy
	$$
	{\cal L}_i \leq \frac{1}{n\gamma} \max_i \{  \frac{v_i}{p_i}  \}, \quad i = 1, 2. 
	$$
\end{lemma}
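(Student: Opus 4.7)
My plan is to reduce the expected squared norm to a form that directly matches the ESO inequality, then apply the smoothness of $\phi_i$ via Lemma \ref{lm:phiexp}.

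First, I will write out the left-hand side of Assumption \ref{as:expsmooth} explicitly. Since $\theta^i_S = 1/p_i$, we have
\[
\mathbb{E}\left[\left\|\tfrac{1}{n}({\bf G}(x) - {\bf G}(y)){\theta}_{S}{\bf I}_{S}e \right\|^2 \right]
= \frac{1}{n^2}\mathbb{E}\left[\left\|\sum_{i\in S}\frac{1}{p_i}(\nabla f_i(x)-\nabla f_i(y))\right\|^2\right].
\]
Using $f_i=\phi_i\circ {\bf A}_i^\top$, the chain rule gives $\nabla f_i(x)-\nabla f_i(y) = {\bf A}_i \left(\nabla\phi_i({\bf A}_i^\top x)-\nabla\phi_i({\bf A}_i^\top y)\right)$, so with the substitution $h_i := \frac{1}{p_i}\left(\nabla\phi_i({\bf A}_i^\top x)-\nabla\phi_i({\bf A}_i^\top y)\right)$ the inner sum becomes $\sum_{i\in S}{\bf A}_i h_i$.

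Second, I will invoke the ESO bound (\ref{eq:ESOfirst}) with these $h_i$:
\[
\mathbb{E}\left[\left\|\sum_{i\in S}{\bf A}_ih_i\right\|^2\right]
\leq \sum_{i=1}^n p_i v_i \|h_i\|^2
= \sum_{i=1}^n \frac{v_i}{p_i}\left\|\nabla\phi_i({\bf A}_i^\top x)-\nabla\phi_i({\bf A}_i^\top y)\right\|^2.
\]
Then Lemma \ref{lm:phiexp} upper bounds each term by $\frac{2}{\gamma}(f_i(x)-f_i(y)-\langle\nabla f_i(y),x-y\rangle)$. Pulling out $\max_i\{v_i/p_i\}$ and using $\sum_i(f_i(x)-f_i(y)-\langle\nabla f_i(y),x-y\rangle) = n(f(x)-f(y)-\langle\nabla f(y),x-y\rangle)$, and dividing by $n^2$, yields
\[
\mathbb{E}\left[\left\|\tfrac{1}{n}({\bf G}(x)-{\bf G}(y)){\theta}_S{\bf I}_S e\right\|^2\right]
\leq \frac{2}{n\gamma}\max_i\left\{\frac{v_i}{p_i}\right\}(f(x)-f(y)-\langle\nabla f(y),x-y\rangle).
\]
Setting $y=x^*$ delivers the bound on ${\cal L}_1$. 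For ${\cal L}_2$, the inequality $\mathbb{E}[\|X-\mathbb{E}[X]\|^2]\leq \mathbb{E}[\|X\|^2]$, already used in (\ref{eq:boundL2}), shows that the same upper bound applies to the variance-centered quantity in Assumption \ref{as:expL2}.

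There is no real obstacle here; this is essentially a two-step calculation. The only point requiring care is picking the right $h_i$ so that the ESO inequality applies cleanly, which absorbs the factor $1/p_i^2$ coming from $\theta^i_S=1/p_i$ into the $p_i v_i\|h_i\|^2$ on the RHS of ESO, leaving exactly $v_i/p_i$.
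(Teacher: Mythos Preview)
Your proposal is correct and follows essentially the same approach as the paper: rewrite the stochastic gradient difference as $\sum_{i\in S}{\bf A}_i h_i$ with $h_i=\tfrac{1}{p_i}(\nabla\phi_i({\bf A}_i^\top x)-\nabla\phi_i({\bf A}_i^\top y))$, apply the ESO inequality~(\ref{eq:ESOfirst}), then Lemma~\ref{lm:phiexp}, pull out $\max_i\{v_i/p_i\}$, and invoke~(\ref{eq:boundL2}) for ${\cal L}_2$. The steps and the key choice of $h_i$ match the paper's proof line for line.
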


\begin{proof}
	
	\begin{eqnarray}
	\mathbb{E}\left[\left\|\frac{1}{n}({\bf G}(x) - {\bf G}(y)){\theta}_{S}{\bf I}_{S}e \right\|^2 \right] &=& \frac{1}{n^2} \mathbb{E} \left[ \left\| \sum_{i\in S} \theta^i_S(\nabla f_i(x) - \nabla f_i(y)) \right\|^2 \right] \nonumber \\
	&=& \frac{1}{n^2} \mathbb{E} \left[ \left\| \sum_{i\in S} \frac{1}{p_i}{\bf A}_i(\nabla \phi_i({\bf A}_i^\top x) - \nabla \phi_i({\bf A}_i^\top y)) \right\|^2 \right] \nonumber \\
	&\overset{(\ref{eq:ESOfirst})}{\leq}& \frac{1}{n^2} \sum_{i=1}^n p_iv_i\cdot \frac{1}{p_i^2} \left\| \nabla\phi_i({\bf A}_i^\top x)-\nabla \phi_i({\bf A}_i^\top y)\right\|^2 \label{eq:forL3exo} \\ 
	&\overset{Lemma  \ref{lm:phiexp}}{\leq}& \frac{1}{n^2} \sum_{i=1}^n \frac{v_i}{p_i}\cdot \frac{2}{\gamma}  \left( f_i(x) - f_i(y) - \langle \nabla f_i(y), x-y \rangle  \right) \nonumber \\ 
	&\leq& \frac{2}{n\gamma} \max_i \frac{v_i}{p_i} \cdot \frac{1}{n} \sum_{i=1}^n  \left( f_i(x) - f_i(y) - \langle \nabla f_i(y), x-y \rangle  \right) \nonumber \\
	&=& \frac{2}{n\gamma} \max_i \frac{v_i}{p_i} \left( f(x) - f(y) - \langle \nabla f(y), x-y \rangle  \right). \nonumber
	\end{eqnarray}
	
	This along with (\ref{eq:boundL2}) implies the results.

\end{proof}

\subsection{Estimation for sampling with replacement $S$ }

\begin{lemma}\label{lm:L1sp}	
	Let ${\tilde p}_i >0$ in distribution ${\tilde D}$ for the sampling with replacement $S$, $f_i$ be $L_i$-smooth and convex, and $f$ be $L_f$-smooth and convex. Then the expected smoothness constant ${\cal L}_1$ in Assumption \ref{as:expsmooth} satisfies 
	$$
	{\cal L}_1 \leq  (1- \frac{1}{\tau})L_f + \frac{1}{n\tau} \max_{i} \frac{L_i}{{\tilde p}_i}.  
	$$
\end{lemma}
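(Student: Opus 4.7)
The plan is to exploit the fact that, for sampling with replacement, the quantity $\tfrac{1}{n}({\bf G}(x) - {\bf G}(x^*)){\theta}_{S}{\bf I}_{S}e$ is a normalized sum of $\tau$ i.i.d.\ random vectors, and then use the variance-mean decomposition $\mathbb{E}[\|X\|^2]=\|\mathbb{E}[X]\|^2+{\rm Var}(X)$ to extract a term that is exactly $\|\nabla f(x)-\nabla f(x^*)\|^2$ (to be handled by $L_f$-smoothness of $f$) and a variance term (to be handled by $L_i$-smoothness of each $f_i$). This mean-variance split is what produces the characteristic $(1-\tfrac{1}{\tau})L_f$ coefficient: the squared mean gets subtracted off in the variance and reappears with the factor $1$, while the averaged per-sample second moment comes with the factor $1/\tau$.

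Concretely, let $Y_1,\dots,Y_\tau$ be the i.i.d.\ indices drawn from $\tilde{\cD}$ and set $V_j \eqdef \tfrac{1}{\tilde p_{Y_j}}(\nabla f_{Y_j}(x)-\nabla f_{Y_j}(x^*))$. First I would write
$$
\tfrac{1}{n}({\bf G}(x)-{\bf G}(x^*)){\theta}_{S}{\bf I}_{S}e \;=\; \frac{1}{n\tau}\sum_{j=1}^{\tau} V_j,
$$
verify that $\mathbb{E}[V_j]=n(\nabla f(x)-\nabla f(x^*))$ (so the sum has mean $\nabla f(x)-\nabla f(x^*)$, matching Assumption~\ref{as:SthetaS}), and then apply the variance-mean identity together with independence to obtain
$$
\mathbb{E}\!\left[\Big\|\tfrac{1}{n\tau}\sum_{j=1}^{\tau} V_j\Big\|^2\right] = \left(1-\tfrac{1}{\tau}\right)\|\nabla f(x)-\nabla f(x^*)\|^2 + \tfrac{1}{n^2\tau}\sum_{i=1}^{n}\tfrac{1}{\tilde p_i}\|\nabla f_i(x)-\nabla f_i(x^*)\|^2.
$$

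Next I would bound the two pieces separately. For the first, $L_f$-smoothness and convexity of $f$ give $\|\nabla f(x)-\nabla f(x^*)\|^2\leq 2L_f\bigl(f(x)-f(x^*)-\langle\nabla f(x^*),x-x^*\rangle\bigr)$. For the second, $L_i$-smoothness and convexity of each $f_i$ give $\|\nabla f_i(x)-\nabla f_i(x^*)\|^2\leq 2L_i\bigl(f_i(x)-f_i(x^*)-\langle\nabla f_i(x^*),x-x^*\rangle\bigr)$; summing with weights $1/\tilde p_i$, pulling out $\max_i L_i/\tilde p_i$, and using $\tfrac{1}{n}\sum_i f_i=f$ turns this into $2\max_i\tfrac{L_i}{\tilde p_i}\cdot n\cdot\bigl(f(x)-f(x^*)-\langle\nabla f(x^*),x-x^*\rangle\bigr)$. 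Adding the two bounds and dividing by $2$ yields the claimed value of $\cL_1$.

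The only mild subtlety is the variance identity for a sum of i.i.d.\ vector-valued random variables and the cancellation that converts the naive bound $L_f$ into $(1-1/\tau)L_f$; everything else is just smoothness and taking a max. I do not expect any real obstacle.
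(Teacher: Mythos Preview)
Your proposal is correct and matches the paper's proof: the paper expands the squared norm of the i.i.d.\ sum into $\tau$ diagonal and $\tau^2-\tau$ off-diagonal inner products and uses independence on the latter, which is algebraically identical to your variance--mean decomposition yielding $(1-\tfrac{1}{\tau})\|\nabla f(x)-\nabla f(x^*)\|^2 + \tfrac{1}{n^2\tau}\sum_i \tfrac{1}{\tilde p_i}\|\nabla f_i(x)-\nabla f_i(x^*)\|^2$. The two terms are then bounded exactly as you describe, via $L_f$-smoothness of $f$ and $L_i$-smoothness of each $f_i$ followed by pulling out $\max_i L_i/\tilde p_i$.
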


\begin{proof}
	
	\begin{eqnarray}\label{eq:forL2-2}
	&& \mathbb{E}\left[\left\|\frac{1}{n}({\bf G}(x) - {\bf G}(y)){\theta}_{S}{\bf I}_{S}e \right\|^2 \right] \nonumber \\ 
	&=& \mathbb{E} \left[ \left\| \sum_{i\in S} \frac{1}{n\tau {\tilde p}_i}\left( \nabla f_i(x) -\nabla f_i(y) \right) \right\|^2  \right] \\ 
	&=& \frac{1}{\tau^2} \mathbb{E} \left[ \sum_{i,j\in S} \left \langle \frac{1}{n{\tilde p}_i}(\nabla f_i(x) - \nabla f_i(y)), \frac{1}{n{\tilde p}_i}(\nabla f_j(x) - \nabla f_j(y)) \right \rangle \right] \nonumber \\ 
	&=& \frac{\tau}{\tau^2} {\mathbb{E}}_{i \sim {\tilde {\cal D}}}\left[ \left\| \frac{1}{n{\tilde p}_i}(\nabla f_i(x) - \nabla f_i(y)) \right\|^2 \right] + \frac{\tau^2 - \tau}{\tau^2} \left\| \nabla f(x) - \nabla f(y) \right\|^2. \nonumber 
	\end{eqnarray}
	
	From (\ref{eq:fismooth}), we have 
	\begin{equation}\label{eq:forL2-3}
	{\mathbb{E}}_{i \sim {\tilde {\cal D}}}\left[ \left\| \frac{1}{n{\tilde p}_i}(\nabla f_i(x) - \nabla f_i(y)) \right\|^2 \right] \leq \frac{2}{n}\max_{i}\frac{L_i}{{\tilde p}_i}(f(x) - f(y) - \langle \nabla f(y), x-y \rangle ). 
	\end{equation}
	
	Combining (\ref{eq:fsmooth}) and (\ref{eq:forL2-3}), we can obtain 
	$$
	\mathbb{E}\left[\left\|\frac{1}{n}({\bf G}(x) - {\bf G}(y)){\theta}_{S}{\bf I}_{S}e \right\|^2 \right] \leq 2\left( \frac{1}{n\tau} \max_{i} \frac{L_i}{{\tilde p}_i} + (1- \frac{1}{\tau})L_f \right) (f(x) - f(y) - \langle \nabla f(y), x-y \rangle ).
	$$
\end{proof}

\begin{lemma}\label{lm:L2sp}	
	Let ${\tilde p}_i >0$ in distribution ${\tilde D}$ for the sampling with replacement $S$, $f_i$ be $L_i$-smooth and convex, and $f$ be $L_f$-smooth and convex. Then the expected smoothness constant ${\cal L}_2$ in Assumption \ref{as:expL2} satisfies 
	$$
	{\cal L}_2 \leq  \frac{1}{n\tau} \max_{i} \frac{L_i}{{\tilde p}_i}.  
	$$
\end{lemma}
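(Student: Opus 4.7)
The plan is to follow almost exactly the same calculation as for Lemma~\ref{lm:L1sp}, but exploit the variance identity $\mathbb{E}[\|X-\mathbb{E}[X]\|^2]=\mathbb{E}[\|X\|^2]-\|\mathbb{E}[X]\|^2$ (as was done for samplings in~(\ref{eq:forL2-1})) to kill the $L_f$ term. The structural reason this gives the cleaner bound is that, for sampling with replacement, the estimator $\tfrac{1}{n}(\mathbf{G}(x)-\mathbf{G}(y))\theta_S\mathbf{I}_S e$ is a mean of $\tau$ i.i.d.\ samples, so its variance decays like $1/\tau$; passing to the \emph{centered} estimator removes the $\|\nabla f(x)-\nabla f(y)\|^2$ piece that otherwise produced the $(1-1/\tau)L_f$ summand in Lemma~\ref{lm:L1sp}.

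Concretely, first I would recall from~(\ref{eq:forL2-2}) that
\[
\mathbb{E}\!\left[\left\|\tfrac{1}{n}(\mathbf{G}(x)-\mathbf{G}(y))\theta_S\mathbf{I}_S e\right\|^2\right]=\tfrac{1}{\tau}\,\mathbb{E}_{i\sim\tilde{\mathcal{D}}}\!\left[\left\|\tfrac{1}{n\tilde p_i}(\nabla f_i(x)-\nabla f_i(y))\right\|^2\right]+\tfrac{\tau-1}{\tau}\|\nabla f(x)-\nabla f(y)\|^2,
\]
which relies solely on $\tau$ i.i.d.\ copies and the fact $\mathbb{E}_{i\sim\tilde{\mathcal{D}}}[\tfrac{1}{n\tilde p_i}(\nabla f_i(x)-\nabla f_i(y))]=\nabla f(x)-\nabla f(y)$. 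Since $\tfrac{1}{n}(\mathbf{G}(x)-\mathbf{G}(y))e=\nabla f(x)-\nabla f(y)$ is exactly the conditional mean, subtracting it produces
\[
\mathbb{E}\!\left[\left\|\tfrac{1}{n}(\mathbf{G}(x)-\mathbf{G}(y))\theta_S\mathbf{I}_S e-\tfrac{1}{n}(\mathbf{G}(x)-\mathbf{G}(y))e\right\|^2\right]=\tfrac{1}{\tau}\,\mathbb{E}_{i\sim\tilde{\mathcal{D}}}\!\left[\left\|\tfrac{1}{n\tilde p_i}(\nabla f_i(x)-\nabla f_i(y))\right\|^2\right]-\tfrac{1}{\tau}\|\nabla f(x)-\nabla f(y)\|^2.
\]
Dropping the nonnegative subtracted term upper-bounds this by $\tfrac{1}{\tau}$ times the per-sample second moment.

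Finally, I would plug in the estimate~(\ref{eq:forL2-3}), which is already proved in the proof of Lemma~\ref{lm:L1sp} by combining convexity and $L_i$-smoothness of each $f_i$ through~(\ref{eq:fismooth}):
\[
\mathbb{E}_{i\sim\tilde{\mathcal{D}}}\!\left[\left\|\tfrac{1}{n\tilde p_i}(\nabla f_i(x)-\nabla f_i(y))\right\|^2\right]\leq \tfrac{2}{n}\max_i\tfrac{L_i}{\tilde p_i}\,\bigl(f(x)-f(y)-\langle\nabla f(y),x-y\rangle\bigr).
\]
Combining the last two displays yields $\mathcal{L}_2\leq \tfrac{1}{n\tau}\max_i\tfrac{L_i}{\tilde p_i}$, as claimed. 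There is no real obstacle here; the only subtle point is making sure to use the variance (centered) form rather than the second-moment bound, so that the $\frac{\tau-1}{\tau}\|\nabla f(x)-\nabla f(y)\|^2$ contribution cancels exactly and no $L_f$ term survives.
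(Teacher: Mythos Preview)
Your proposal is correct and follows essentially the same approach as the paper: combine the variance identity~(\ref{eq:forL2-1}) with the second-moment decomposition~(\ref{eq:forL2-2}) to obtain the centered expression, drop the nonpositive $-\tfrac{1}{\tau}\|\nabla f(x)-\nabla f(y)\|^2$ term, and then apply~(\ref{eq:forL2-3}). The paper's proof is line-for-line the same computation.
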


\begin{proof}
	Combing (\ref{eq:forL2-1}) and (\ref{eq:forL2-2}), we have 
	\begin{eqnarray}
	&& \mathbb{E}\left[\left\|\frac{1}{n}({\bf G}(x) - {\bf G}(y)){\theta}_{S}{\bf I}_{S}e - \frac{1}{n}({\bf G}(x) - {\bf G}(y))e\right\|^2 \right] \nonumber \\
	&=&  \frac{\tau}{\tau^2} {\mathbb{E}}_{i \sim {\tilde {\cal D}}}\left[ \left\| \frac{1}{n{\tilde p}_i}(\nabla f_i(x) - \nabla f_i(y)) \right\|^2 \right]  - \frac{ \tau}{\tau^2} \left\| \nabla f(x) - \nabla f(y) \right\|^2 \nonumber \\ 
	&\leq&  \frac{\tau}{\tau^2} {\mathbb{E}}_{i \sim {\tilde {\cal D}}}\left[ \left\| \frac{1}{n{\tilde p}_i}(\nabla f_i(x) - \nabla f_i(y)) \right\|^2 \right] \label{eq:forL3sp} \\ 
	&\overset{(\ref{eq:forL2-3})}{\leq}&  \frac{2}{n\tau}\max_{i}\frac{L_i}{{\tilde p}_i}(f(x) - f(y) - \langle \nabla f(y), x-y \rangle ). \nonumber 
	\end{eqnarray}

\end{proof}

\section{Importance Sampling and Importance sampling with replacement}\label{sec:importance}

In this section, we contruct importance sampling and importance sampling with replacement respectively. 

\vskip 2mm

Let $\tau$ be expected minibatch size for sampling or the number of copies for sampling with replacement, and $p = \frac{\tau}{n}$. Then by Theorem \ref{Th:convp}, the iteration complexity for L-SVRG is 
\begin{equation}\label{eq:itersvrg}
{\cal O}\left( \left( \frac{n}{\tau} + \frac{{\cal L}_1}{\mu}\right) \log\frac{1}{\epsilon}\right),
\end{equation}
and by Theorem \ref{Th:LKa}, the iteration complexity for L-Katyusha is 
\begin{equation}\label{eq:iterkat}
{\cal O}\left( \left( \frac{n}{\tau} + \sqrt{\frac{L_f}{\mu}} + \sqrt{\frac{{\cal L}_2n}{\mu \tau}}  \right) \log\frac{1}{\epsilon}\right). 
\end{equation}
From (\ref{eq:itersvrg}) and (\ref{eq:iterkat}), we can see that we need to make ${\cal L}_1$ and ${\cal L}_2$ as small as possible.

\subsection{Importance sampling}

We focus on the group sampling. From Lemma \ref{lm:L1samplingnsgs} (ii) and Lemma \ref{lm:L2samplingnsgs} (ii), we need to minimize  
\begin{equation}\label{eq:ims11}
\max\left\{  \max_{i\in {\cal I}} (\frac{1}{p_i} - 1)L_i ,\  \max_{i \notin {\cal I}} \frac{L_i}{p_i} \right\}, 
\end{equation}
where ${\cal I}$ is the isolated index set. The minimization of (\ref{eq:ims11}) is not easy generally, next we focus on finding an approximal solution. Let 
$$
q_i = \frac{L_i}{\sum_{i=1}^n L_i} \cdot \tau, 
$$
and $T = \{  i | q_i > 1  \}$. If $T = \emptyset$, by choosing $p_i = q_i$ for all $i$, we can get 
$$
\max\left\{  \max_{i\in {\cal I}} (\frac{1}{p_i} - 1)L_i ,\  \max_{i \notin {\cal I}} \frac{L_i}{p_i} \right\} \leq \frac{n{\bar L}}{\tau}.  
$$
If $T \neq \emptyset$, we can choosing $p_i = 1$ for $i \in T$, and $q_i \leq p_i \leq 1$ such that $\sum_{i=1}^n p_i = \tau$. In this way, noticing that $p_i = 1$ implies $i$ is an isolated index by the definition of group sampling, we have $(\frac{1}{p_i} - 1)L_i = 0$ for $i\in T$. Hence we can also obtain 
$$
\max\left\{  \max_{i\in {\cal I}} (\frac{1}{p_i} - 1)L_i ,\  \max_{i \notin {\cal I}} \frac{L_i}{p_i} \right\} \leq \frac{n{\bar L}}{\tau}. 
$$

To summarize the above two cases, by choosing $\min\{ q_i, 1  \} \leq p_i \leq 1$ such that $\sum_{i\in [n]}p_i = \tau$, we have 
\begin{equation}\label{eq:ims22}
\max\left\{  \max_{i\in {\cal I}} (\frac{1}{p_i} - 1)L_i ,\  \max_{i \notin {\cal I}} \frac{L_i}{p_i} \right\} \leq \frac{n{\bar L}}{\tau}. 
\end{equation}
It should be noticed that in practice, we can just choose $p_i = \min\{ q_i, 1 \}$ for convenience, and then  (\ref{eq:ims22})  also holds, but with $\mathbb{E}[|S|] = \sum_{i\in [n]}p_i \leq \tau$. From (\ref{eq:ims22}) and Lemmas \ref{lm:L1samplingnsgs} and \ref{lm:L2samplingnsgs}, we have 
${\cal L}_1 \leq L_f + \frac{{\bar L}}{\tau}$ and ${\cal L}_2 \leq \frac{{\bar L}}{\tau}$. Therefore, from (\ref{eq:itersvrg}) the iteration complexity for L-SVRG becomes 
\begin{equation}\label{eq:itersvrgis}
{\cal O}\left( \left( \frac{n}{\tau} + \frac{L_f}{\mu} + \frac{{\bar L}}{\tau \mu} \right) \log\frac{1}{\epsilon}\right), 
\end{equation}
which has linear speed up with respect to $\tau$ when $\tau \leq \frac{{\bar L}}{L_f}$. While, when $\tau \geq \frac{{\bar L}}{L_f}$, (\ref{eq:itersvrgis}) becomes 
$$
{\cal O}\left( \left( \frac{n}{\tau} + \frac{L_f}{\mu}  \right) \log\frac{1}{\epsilon}\right). 
$$ 
From (\ref{eq:iterkat}), the iteration complexity for L-Katyusha becomes 
\begin{equation}\label{eq:iterkatis}
{\cal O}\left( \left( \frac{n}{\tau} + \sqrt{\frac{L_f}{\mu}} + \frac{1}{\tau}\sqrt{\frac{{\bar L}n}{\mu} }  \right) \log\frac{1}{\epsilon}\right),  
\end{equation}
which has linear speed up with respect to $\tau$ when $\tau \leq \sqrt{\frac{{\bar L}n}{L_f}}$. While when $\tau \geq \sqrt{\frac{{\bar L}n}{L_f}}$, (\ref{eq:iterkatis}) becomes 
$$
{\cal O}\left( \left( \frac{n}{\tau} + \sqrt{\frac{L_f}{\mu}}   \right) \log\frac{1}{\epsilon}\right). 
$$

\subsection{Importance sampling with replacement} 

From Lemmas \ref{lm:L1sp} and \ref{lm:L2sp}, we need to minimize $ \max_{i} {L_i}/{{\tilde p}_i}$. It is easy to see that by choosing ${\tilde p}_i = L_i/n{\bar L}$, the minimum of $ \max_{i} {L_i}/{{\tilde p}_i}$ is $n{\bar L}$. In this case, ${\cal L}_1 = (1 - \frac{1}{\tau}) L_f + \frac{{\bar L}}{\tau}$, and ${\cal L}_2 = \frac{{\bar L}}{\tau}$. Hence, from (\ref{eq:itersvrg}), the iteration  complexity for L-SVRG becomes 
\begin{equation}\label{eq:itersvrgisp}
{\cal O}\left( \left( \frac{n}{\tau} + \frac{L_f}{\mu} + \frac{{\bar L} - L_f}{\tau \mu} \right) \log\frac{1}{\epsilon}\right), 
\end{equation}
which has linear speed up with respect to $\tau$ when $\tau \leq \frac{{\bar L}}{L_f} - 1$. While, when $\tau \geq \frac{{\bar L}}{L_f}-1$, (\ref{eq:itersvrgis}) becomes 
$$
{\cal O}\left( \left( \frac{n}{\tau} + \frac{L_f}{\mu}  \right) \log\frac{1}{\epsilon}\right). 
$$ 
From (\ref{eq:iterkat}), the iteration complexity for L-Katyusha becomes 
\begin{equation}\label{eq:iterkatisp}
{\cal O}\left( \left( \frac{n}{\tau} + \sqrt{\frac{L_f}{\mu}} + \frac{1}{\tau}\sqrt{\frac{{\bar L}n}{\mu} }  \right) \log\frac{1}{\epsilon}\right),  
\end{equation}
which has linear speed up with respect to $\tau$ when $\tau \leq \sqrt{\frac{{\bar L}n}{L_f}}$. While when $\tau \geq \sqrt{\frac{{\bar L}n}{L_f}}$, (\ref{eq:iterkatis}) becomes 
$$
{\cal O}\left( \left( \frac{n}{\tau} + \sqrt{\frac{L_f}{\mu}}   \right) \log\frac{1}{\epsilon}\right). 
$$

\subsection{Comparison} 

For L-SVRG, (\ref{eq:itersvrgis}) and (\ref{eq:itersvrgisp}) have the essentially same bounds with the iteration complexity of SAGA with importance sampling in \cite{SAGA-AS}. From (\ref{eq:itersvrg}) and Lemma \ref{lm:L1samplinggeneral}, the iteration complexity for L-SVRG with arbitrary sampling becomes 
$$
{\cal O}\left( \left( \frac{n}{\tau} + \frac{1}{n\mu} \max_i\{  L_i\beta_i  \} \right)  \log\frac{1}{\epsilon} \right). 
$$
While the iteration complexity of SAGA with arbitrary sampling \cite{SAGA-AS} is 
$$
{\cal O}\left(  \max_i\left\{  \frac{1}{p_i} + \frac{4L_i\beta_i}{n\mu}  \right\} \log\frac{1}{\epsilon} \right). 
$$

From (\ref{eq:itersvrg}) and Lemma \ref{lm:esoexp}, the iteration complexity of L-SVRG becomes 
$$
{\cal O}\left( \left( \frac{n}{\tau} + \frac{1}{n\gamma \mu} \max_i\{  \frac{v_i}{p_i}  \} \right) \log\frac{1}{\epsilon}\right). 
$$
While the iteration complexity of Quartz in \cite{Quartz} is 
$$
{\cal O}\left(  \max_i\left\{  \frac{1}{p_i} + \frac{v_i}{p_in\gamma\mu}  \right\} \log\frac{1}{\epsilon} \right). 
$$
Since $\max_i \{  \frac{1}{p_i}  \} \geq \frac{n}{\tau}$, we can see the iteration complexity of L-SVRG is at least as good as that of SAGA and Quartz.

For L-Katyusha, (\ref{eq:iterkatis}) and (\ref{eq:iterkatisp}) have the essentially same bounds with the iteration complexity of Katyusha with importance sampling with replacement in \cite{Katyusha}.

\section{Estimation of ${\cal L}_3$}\label{sec:L3}

\subsection{Estimation for sampling $S$} 

\begin{lemma}\label{lm:L3sampling}
	Let $S$ be a proper sampling and $f_i$ be $L_i$-smooth. \\
	(i) The constant ${\cal L}_3$ in Assumption \ref{as:expL3} satisfies 
	$$
	{\cal L}_3 \leq \frac{1}{n^2} \sum_{i, j=1}^n \sum_{C: i, j \in C}p_C \theta^i_C\theta^j_C L_iL_j. 
	$$
	Specifically, if $\theta^i_C = \frac{1}{p_i}$ for all $i$ and $C$, then 
	$$
	{\cal L}_3 \leq \frac{1}{n^2} \sum_{i, j =1}^n \frac{{\bf P}_{ij}}{p_ip_j} L_iL_j. 
	$$
	(ii) We have 
	$$
	{\cal L}_3 \leq \frac{1}{n^2} \sum_{i=1}^n \beta_i L_i^2. 
	$$
	
\end{lemma}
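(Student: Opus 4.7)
\textbf{Proof proposal for Lemma~\ref{lm:L3sampling}.} The plan is to mirror the two-step structure used already for $\mathcal{L}_1$ and $\mathcal{L}_2$ in Lemma~\ref{lm:L1samplinggeneral}. Because Assumption~\ref{as:expL3} involves the variance-like quantity with $\tfrac{1}{n}({\bf G}(x)-{\bf G}(y))e$ subtracted, and since $\mathbb{E}[{\theta}_S{\bf I}_S]e=e$ under Assumption~\ref{as:SthetaS}, the random vector $\tfrac{1}{n}({\bf G}(x)-{\bf G}(y)){\theta}_{S}{\bf I}_{S}e$ has expectation equal to $\tfrac{1}{n}({\bf G}(x)-{\bf G}(y))e$. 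Using $\mathbb{E}[\|X-\mathbb{E}[X]\|^2]\le \mathbb{E}[\|X\|^2]$ (exactly as in equation~\eqref{eq:boundL2}) reduces the task to bounding $\mathbb{E}\big[\big\|\tfrac{1}{n}({\bf G}(x)-{\bf G}(y)){\theta}_{S}{\bf I}_{S}e\big\|^2\big]$ by a multiple of $\|x-y\|^2$.

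For part (i), I would expand the expectation over the set-valued sampling: $\mathbb{E}\big[\big\|\tfrac{1}{n}({\bf G}(x)-{\bf G}(y)){\theta}_{S}{\bf I}_{S}e\big\|^2\big]=\tfrac{1}{n^2}\sum_{C}p_C \sum_{i,j\in C}\theta^i_C\theta^j_C\langle \nabla f_i(x)-\nabla f_i(y),\nabla f_j(x)-\nabla f_j(y)\rangle$. Applying Cauchy--Schwarz to each inner product and then the $L_i$-smoothness in the form $\|\nabla f_i(x)-\nabla f_i(y)\|\le L_i\|x-y\|$ bounds the inner product by $L_iL_j\|x-y\|^2$. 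Swapping the order of summation gives $\tfrac{1}{n^2}\sum_{i,j=1}^n\big(\sum_{C:i,j\in C}p_C\theta^i_C\theta^j_C\big)L_iL_j\|x-y\|^2$. Specializing to $\theta^i_C\equiv 1/p_i$ turns $\sum_{C:i,j\in C}p_C/(p_ip_j)$ into ${\bf P}_{ij}/(p_ip_j)$, yielding the second bound.

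For part (ii), I would reuse identity~\eqref{eq:L12beta} which the paper has already derived via Cauchy--Schwarz on $\big\|\sum_{i\in C}\theta^i_C(\nabla f_i(x)-\nabla f_i(y))\big\|^2\le |C|\sum_{i\in C}(\theta^i_C)^2\|\nabla f_i(x)-\nabla f_i(y)\|^2$ and the definition~\eqref{betai} of $\beta_i$, giving $\mathbb{E}\big[\big\|\tfrac{1}{n}({\bf G}(x)-{\bf G}(y)){\theta}_{S}{\bf I}_{S}e\big\|^2\big]=\tfrac{1}{n^2}\sum_{i=1}^n\beta_i\|\nabla f_i(x)-\nabla f_i(y)\|^2$. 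Substituting $\|\nabla f_i(x)-\nabla f_i(y)\|^2\le L_i^2\|x-y\|^2$ (from $L_i$-smoothness, which here does \emph{not} require convexity of $f_i$) delivers the claimed bound.

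No serious obstacle is anticipated: unlike the $\mathcal{L}_1$ and $\mathcal{L}_2$ estimates, we do not need convexity of $f_i$ to convert the squared gradient norm into a Bregman-like expression, so pointwise Lipschitzness of $\nabla f_i$ alone suffices. The only care to take is tracking the symmetry when the indices $i=j$ occur inside a set $C$; these diagonal contributions $L_i^2(\theta^i_C)^2$ are already captured by the sums as written, so no separate casework is needed.
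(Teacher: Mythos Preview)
Your proposal is correct and follows essentially the same route as the paper's own proof: both reduce via~\eqref{eq:boundL2} to bounding the uncentered second moment, expand the squared norm into a double sum of inner products over pairs $i,j\in C$, bound each inner product by $L_iL_j\|x-y\|^2$ (Cauchy--Schwarz plus $L_i$-smoothness), and swap the summation order; part~(ii) is handled in both by quoting~\eqref{eq:L12beta} and then applying $\|\nabla f_i(x)-\nabla f_i(y)\|\le L_i\|x-y\|$. Your remark that convexity of $f_i$ is not needed here matches the paper's hypotheses exactly.
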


\begin{proof}
	(i) First, we have 
	\begin{eqnarray}
	&& \mathbb{E}\left[\left\|\frac{1}{n}({\bf G}(x) - {\bf G}(y)){\theta}_{S}{\bf I}_{S}e \right\|^2 \right]  \nonumber\\ 
	&=& \frac{1}{n^2} \mathbb{E} \left[  \|\sum_{i \in S} {\theta}^i_S (\nabla f_i(x) - \nabla f_i(y))\|^2  \right] \nonumber \\ 
	&=& \frac{1}{n^2} \sum_C p_C \left \langle \sum_{i \in C} \theta^i_C (\nabla f_i(x) - \nabla f_i(y)), \sum_{i \in C} \theta^i_C (\nabla f_i(x) - \nabla f_i(y)) \right \rangle \nonumber  \\ 
	&=& \frac{1}{n^2} \sum_C p_C \sum_{i, j \in C} \theta^i_C \theta^j_C \left \langle  (\nabla f_i(x) - \nabla f_i(y)),  (\nabla f_j(x) - \nabla f_j(y)) \right \rangle \nonumber \\ 
	&=& \frac{1}{n^2} \sum_{i, j=1}^n \sum_{C: i, j \in C} p_C \theta^i_C \theta^j_C \left \langle   (\nabla f_i(x) - \nabla f_i(y)),  (\nabla f_j(x) - \nabla f_j(y)) \right \rangle \label{eq:nicesL}\\ 
	&\leq&  \frac{1}{n^2} \sum_{i, j=1}^n \sum_{C: i, j \in C} p_C \theta^i_C \theta^j_C L_iL_j \|x-y\|^2. \nonumber 
	\end{eqnarray}
	
	This along with (\ref{eq:boundL2}) implies the result. If $\theta^i_C = \frac{1}{p_i}$, then 
	$$
	\sum_{C: i, j \in C} p_C \theta^i_C \theta^j_C = \frac{{\bf P}_{ij}}{p_ip_j}.  
	$$

	\vskip 2mm 
	
	(ii) From (\ref{eq:L12beta}), we have 
	\begin{eqnarray*}
		\mathbb{E}\left[\left\|\frac{1}{n}({\bf G}(x) - {\bf G}(y)){\theta}_{S}{\bf I}_{S}e \right\|^2 \right] &\leq& \frac{1}{n^2} \sum_{i=1}^n \beta_i \|\nabla f_i(x) - \nabla f_i(y) \|^2 \\ 
		&\leq& \frac{1}{n^2} \sum_{i=1}^n \beta_i L_i^2 \|x-y\|^2. 
	\end{eqnarray*}
	
	This along with (\ref{eq:boundL2}) implies the result. 
	
\end{proof}

\begin{lemma}\label{lm:L3nsgs}
	Let $S$ be a proper sampling, $\theta^i_S = 1/p_i$. Let $f_i$ be $L_i$-smooth. \\ 
	(i) For $\tau$-nice sampling $S$, the ${\cal L}_3$ in Assumption \ref{as:expL3} satisfies 
	$$
	{\cal L}_3 \leq \frac{n-\tau}{\tau (n-1)}\cdot \frac{1}{n}\sum_{i=1}^n L_i^2. 
	$$
	(ii) For group sampling $S$, denote the isolated index set as ${\cal I}$, then we have 
	$$
	{\cal L}_3 \leq \frac{1}{n^2} \left(  \sum_{i\in {\cal I}} (\frac{1}{p_i} -1)L_i^2 + \sum_{i \notin {\cal I}} \frac{1}{p_i} L_i^2  \right). 
	$$
	
\end{lemma}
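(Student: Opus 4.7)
\textbf{Proof plan for Lemma~\ref{lm:L3nsgs}.}

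The starting point for both parts is the variance identity $\mathbb{E}[\|X-\mathbb{E}[X]\|^2] = \mathbb{E}[\|X\|^2] - \|\mathbb{E}[X]\|^2$ applied to $X = \tfrac{1}{n}(\mathbf{G}(x)-\mathbf{G}(y))\theta_S \mathbf{I}_S e$. Since Assumption~\ref{as:SthetaS} gives $\mathbb{E}[X] = \tfrac{1}{n}(\mathbf{G}(x)-\mathbf{G}(y))e = \nabla f(x) - \nabla f(y)$, the quantity we must control in $\mathcal{L}_3$ equals $\mathbb{E}[\|X\|^2] - \|\nabla f(x)-\nabla f(y)\|^2$. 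For both parts, I would expand $\mathbb{E}[\|X\|^2]$ using the formulas already derived in the proofs of Lemmas~\ref{lm:L1samplinggeneral} and~\ref{lm:L1samplingnsgs}, then subtract $\|\nabla f(x)-\nabla f(y)\|^2 = \tfrac{1}{n^2}\sum_{i,j}\langle \nabla f_i(x)-\nabla f_i(y), \nabla f_j(x)-\nabla f_j(y)\rangle$.

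For part (i), $\tau$-nice sampling gives $p_i = \tau/n$, $\mathbf{P}_{ii} = \tau/n$, and $\mathbf{P}_{ij} = \tau(\tau-1)/(n(n-1))$ for $i\neq j$. Plugging into the identity from Lemma~\ref{lm:L1samplingnsgs}(ii) derivation, the diagonal coefficient becomes $\mathbf{P}_{ii}/p_i^2 - 1 = (n-\tau)/\tau$ and the off-diagonal coefficient becomes $\mathbf{P}_{ij}/(p_i p_j) - 1 = -(n-\tau)/(\tau(n-1))$. Factoring out $(n-\tau)/(\tau(n-1))$ and using $(n-1)\sum_i \|\cdot\|^2 - \sum_{i\neq j}\langle\cdot,\cdot\rangle = n\sum_i \|\cdot\|^2 - \|\sum_i \cdot\|^2$, the off-diagonal cross term collapses against part of the diagonal to leave exactly $\tfrac{n-\tau}{n\tau(n-1)}\sum_i \|\nabla f_i(x)-\nabla f_i(y)\|^2 - \tfrac{n-\tau}{\tau(n-1)}\|\nabla f(x)-\nabla f(y)\|^2$. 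Dropping the second (non-positive) term and applying $\|\nabla f_i(x)-\nabla f_i(y)\|^2 \leq L_i^2 \|x-y\|^2$ yields the claimed bound.

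For part (ii), I would reuse the intermediate equality~\eqref{eq:groupsL} derived in the proof of Lemma~\ref{lm:L1samplingnsgs}(ii), namely
\[
\mathbb{E}[\|X\|^2] = \|\nabla f(x)-\nabla f(y)\|^2 - \frac{1}{n^2}\sum_{j=1}^t\Bigl\|\sum_{i\in C_j}(\nabla f_i(x)-\nabla f_i(y))\Bigr\|^2 + \frac{1}{n^2}\sum_{i=1}^n\frac{1}{p_i}\|\nabla f_i(x)-\nabla f_i(y)\|^2.
\]
Subtracting $\|\nabla f(x)-\nabla f(y)\|^2$, the first term disappears. I would then split the group sum into isolated groups (for which $\|\sum_{i\in C_j}(\cdot)\|^2 = \|\nabla f_i(x)-\nabla f_i(y)\|^2$ cancels the $1/p_i$ diagonal term down to a factor $1/p_i - 1$) and non-isolated groups (whose contribution is non-positive and can be dropped). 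This yields
\[
\mathcal{L}_3 \|x-y\|^2 \geq \frac{1}{n^2}\sum_{i\in\mathcal{I}}\Bigl(\frac{1}{p_i}-1\Bigr)\|\nabla f_i(x)-\nabla f_i(y)\|^2 + \frac{1}{n^2}\sum_{i\notin\mathcal{I}}\frac{1}{p_i}\|\nabla f_i(x)-\nabla f_i(y)\|^2,
\]
and the smoothness bound $\|\nabla f_i(x)-\nabla f_i(y)\|^2 \leq L_i^2 \|x-y\|^2$ finishes the proof.

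No step looks genuinely hard; both parts are essentially bookkeeping on top of formulas already established in the paper. The only place to be careful is in part (i) when performing the algebraic manipulation that converts the $(n-1)\sum_i - \sum_{i\neq j}$ combination into $n\sum_i - \|\sum_i\|^2$, since a sign or index error there would derail the cancellation of $\|\nabla f(x)-\nabla f(y)\|^2$.
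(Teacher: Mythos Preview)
Your approach is essentially the same as the paper's: both parts rely on the variance identity~\eqref{eq:forL2-1}, the expansion~\eqref{eq:nicesL} (respectively~\eqref{eq:groupsL}), then dropping a nonpositive term and applying $L_i$-smoothness. The only slip is in part~(ii) where you wrote ``$\mathcal{L}_3\|x-y\|^2 \geq \dots$'' when you meant the variance is $\leq$ that expression; with that corrected, your argument matches the paper's proof.
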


\begin{proof}
	
	(i) From (\ref{eq:nicesL}), we have 
	\begin{eqnarray*}
		&& \mathbb{E}\left[\left\|\frac{1}{n}({\bf G}(x) - {\bf G}(y)){\theta}_{S}{\bf I}_{S}e \right\|^2 \right] \\ 
		&=&  \frac{1}{n^2} \sum_{i, j=1}^n \sum_{C: i, j \in C} p_C \theta^i_C \theta^j_C \left \langle   (\nabla f_i(x) - \nabla f_i(y)),  (\nabla f_j(x) - \nabla f_j(y)) \right \rangle \\ 
		&=&  \frac{1}{n^2} \sum_{i, j=1}^n \frac{{\bf P}_{ij}}{p_ip_j} \left \langle   (\nabla f_i(x) - \nabla f_i(y)),  (\nabla f_j(x) - \nabla f_j(y)) \right \rangle. 
	\end{eqnarray*}
	
	For $\tau$-nice sampling, ${\bf P}_{ij} = \frac{\tau(\tau-1)}{n(n-1)}$ for $i\neq j$ and $p_i = \frac{\tau}{n}$. Hence 
	\begin{eqnarray*}
		&& \mathbb{E}\left[\left\|\frac{1}{n}({\bf G}(x) - {\bf G}(y)){\theta}_{S}{\bf I}_{S}e \right\|^2 \right] \\ 
		&=& \frac{1}{n^2} \sum_{i \neq j} \frac{n(\tau-1)}{\tau(n-1)} \left \langle   (\nabla f_i(x) - \nabla f_i(y)),  (\nabla f_j(x) - \nabla f_j(y)) \right \rangle + \frac{1}{n^2}  \sum_{i=1}^n \frac{n}{\tau} \|\nabla f_i(x) -\nabla f_i(y) \|^2 \\ 
		&=& \frac{1}{n^2} \sum_{i, j=1}^n \frac{n(\tau-1)}{\tau(n-1)} \left \langle   (\nabla f_i(x) - \nabla f_i(y)),  (\nabla f_j(x) - \nabla f_j(y)) \right \rangle + \frac{1}{n^2}  \sum_{i=1}^n \frac{n(n-\tau)}{\tau(n-1)} \|\nabla f_i(x) -\nabla f_i(y) \|^2 \\ 
		&=& \|\nabla f(x) - \nabla f(y)\|^2 + \frac{1}{n^2}  \sum_{i=1}^n \frac{n(n-\tau)}{\tau(n-1)} \|\nabla f_i(x) -\nabla f_i(y) \|^2. 
	\end{eqnarray*}
	
	From the above equality and (\ref{eq:forL2-1}), we have 
	\begin{eqnarray*}
		&& \mathbb{E}\left[\left\|\frac{1}{n}({\bf G}(x) - {\bf G}(y)){\theta}_{S}{\bf I}_{S}e - \frac{1}{n}({\bf G}(x) - {\bf G}(y))e\right\|^2 \right] \nonumber \\ 
		&=& \frac{1}{n^2}  \sum_{i=1}^n \frac{n(n-\tau)}{\tau(n-1)} \|\nabla f_i(x) -\nabla f_i(y) \|^2 \\ 
		&\leq& \frac{1}{n^2}  \sum_{i=1}^n \frac{n(n-\tau)}{\tau(n-1)} L_i^2 \|x-y\|^2. 
	\end{eqnarray*}

	(ii) From (\ref{eq:groupsL}) and (\ref{eq:forL2-1}), we have 
	\begin{eqnarray*}
		&& \mathbb{E}\left[\left\|\frac{1}{n}({\bf G}(x) - {\bf G}(y)){\theta}_{S}{\bf I}_{S}e - \frac{1}{n}({\bf G}(x) - {\bf G}(y))e\right\|^2 \right] \nonumber \\ 
		&=& - \frac{1}{n^2} \sum_{j =1}^t \left \| \sum_{i \in C_j} (\nabla f_i(x) - \nabla f_i(y)) \right \|^2 \nonumber + \frac{1}{n^2} \sum_{i=1}^n \frac{1}{p_i} \|\nabla f_i(x) - \nabla f_i(y)\|^2 \\ 
		&\leq& \frac{1}{n^2} \sum_{i \in {\cal I}} (\frac{1}{p_i} -1)\|\nabla f_i(x) - \nabla f_i(y) \|^2 + \frac{1}{n^2} \sum_{i \notin {\cal I}} \frac{1}{p_i} \|\nabla f_i(x) - \nabla f_i(y)\|^2 \\ 
		&\leq& \frac{1}{n^2} \left(  \sum_{i\in {\cal I}} (\frac{1}{p_i} -1)L_i^2 + \sum_{i \notin {\cal I}} \frac{1}{p_i} L_i^2   \right) \|x-y\|^2. 
	\end{eqnarray*}
	
\end{proof}

\begin{proposition}\label{pro:L3gs}
	Let $f_i$ be $L_i$-smooth. For group sampling $S$ with $\mathbb{E}[|S|] = \sum_{i=1}^np_i = \tau$, by choosing $\min\{  q_i, 1  \} \leq p_i \leq 1$, where $q_i = L_i\tau / \sum_{i=1}^nL_i$, we have 
	$$
	{\cal L}_3 \leq \frac{{\bar L}^2}{\tau}. 
	$$
\end{proposition}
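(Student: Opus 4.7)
}

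My plan is to feed the choice of probabilities directly into the group-sampling bound of Lemma~\ref{lm:L3nsgs}(ii), and exploit a simple structural fact about the isolated set ${\cal I}$ to kill the ``bad'' terms. Concretely, Lemma~\ref{lm:L3nsgs}(ii) gives
\[
{\cal L}_3 \;\leq\; \frac{1}{n^2}\left( \sum_{i\in {\cal I}} \bigl(\tfrac{1}{p_i}-1\bigr)L_i^2 \;+\; \sum_{i\notin {\cal I}} \tfrac{1}{p_i}L_i^2 \right),
\]
so I just need a uniform per-index bound on the summand compatible with the hypothesis $p_i \geq \min\{q_i,1\}$.

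The first step is to observe that whenever $p_i=1$, the index $i$ must be isolated. Indeed, by the definition of group sampling, any group $C_j$ containing $i$ satisfies $\sum_{s\in C_j} p_s \leq 1$; combined with $p_s>0$ for all $s$, having $p_i=1$ forces $C_j=\{i\}$. Hence the set $\{i : p_i=1\}$ is contained in ${\cal I}$, and these indices contribute $(1/p_i -1)L_i^2 = 0$ to the bound. Every surviving index has $p_i<1$, which by the hypothesis $p_i \geq \min\{q_i,1\}$ forces $p_i \geq q_i = L_i\tau/\sum_j L_j = L_i/(n\bar L)\cdot\tau$. For such $i$, regardless of whether it is isolated or not, its contribution is at most $L_i^2/p_i \leq L_i^2/q_i = L_i\, n\bar L/\tau$.

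The second step is to sum these per-index bounds and collapse. Letting $T^c = \{i: p_i<1\}$, we get
\[
{\cal L}_3 \;\leq\; \frac{1}{n^2}\sum_{i\in T^c} \frac{L_i \, n\bar L}{\tau} \;\leq\; \frac{\bar L}{n\tau}\sum_{i=1}^n L_i \;=\; \frac{\bar L^2}{\tau},
\]
which is the claim.

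There is no serious obstacle here: the only subtle point is the observation that $p_i=1$ forces isolation (so the $(1/p_i-1)$ weighting is not wasteful on the boundary), and everything else is a direct substitution into Lemma~\ref{lm:L3nsgs}(ii). I would write the argument in two short displayed chains as above, separating the ``$p_i=1$'' and ``$p_i<1$'' cases explicitly so the reader can see why the hypothesis $p_i\geq \min\{q_i,1\}$ (rather than simply $p_i\geq q_i$) is sufficient and natural.
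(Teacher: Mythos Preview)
Your proposal is correct and follows essentially the same route as the paper: both invoke Lemma~\ref{lm:L3nsgs}(ii), isolate the indices that contribute zero, and bound the remaining terms via $p_i\geq q_i$. The only cosmetic difference is that the paper splits on $T=\{i:q_i>1\}$ whereas you split on $\{i:p_i=1\}\supseteq T$; your decomposition is in fact slightly cleaner, since it directly exploits that $p_i=1$ forces isolation and kills the $(1/p_i-1)$ term, without needing to separately argue that $i\notin T$ implies $p_i\geq q_i$.
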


\begin{proof}
	
	Denote the isolated index set as ${\cal I}$, and $T = \{  i | q_i>1  \}$. If $q_i >1$, then $p_i =1$, and $i$ must belong to ${\cal I}$. Hence $T \subset {\cal I}$. From Lemma \ref{lm:L3nsgs} (ii), we have 
	\begin{eqnarray*}
		{\cal L}_3 &\leq& \frac{1}{n^2} \left(  \sum_{i\in {\cal I}} (\frac{1}{p_i} -1)L_i^2 + \sum_{i \notin {\cal I}} \frac{1}{p_i} L_i^2  \right) \\ 
		&=& \frac{1}{n^2} \left(  \sum_{i\in T} (\frac{1}{p_i} -1)L_i^2 + \sum_{i\in {\cal I} - T} (\frac{1}{p_i} -1)L_i^2 +  \sum_{i \notin {\cal I}} \frac{1}{p_i} L_i^2  \right)\\ 
		&=& \frac{1}{n^2} \left( \sum_{i\in {\cal I} - T} (\frac{1}{p_i} -1)L_i^2 +  \sum_{i \notin {\cal I}} \frac{1}{p_i} L_i^2  \right) \\ 
		&\leq& \frac{1}{n^2} \sum_{i \notin T}\frac{1}{p_i}L_i^2 \\ 
		&=& \frac{1}{n^2} \sum_{i \notin T} \frac{L_i \sum_{i=1}^nL_i}{\tau} \\
		&\leq& \frac{{\bar L}^2}{\tau}. 
	\end{eqnarray*}
	
\end{proof}

\begin{lemma}\label{lm:L3eso}
	Let $S$ be a proper sampling, and $\theta^i_S = 1/p_i$. Let $\phi_i$ be $1/\gamma$-smooth. If the ESO inequality (\ref{eq:ESOfirst}) holds, then the ${\cal L}_3$ in Assumption \ref{as:expL3} satisfies
	$$
	{\cal L}_3 \leq \frac{1}{n^2 \gamma^2} \sum_{i=1}^n \frac{v_i\|{\bf A}_i\|^2}{p_i}. 
	$$
\end{lemma}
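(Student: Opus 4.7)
The plan is to start from the same algebraic identity that opens the proof of Lemma~\ref{lm:esoexp}: using $\mathbb{E}[\|X-\mathbb{E}[X]\|^2]\leq \mathbb{E}[\|X\|^2]$ (equation~(\ref{eq:boundL2})), we can upper bound the left-hand side of Assumption~\ref{as:expL3} by
\[
\mathbb{E}\!\left[\left\|\tfrac{1}{n}(\mathbf{G}(x)-\mathbf{G}(y))\theta_S\mathbf{I}_S e\right\|^2\right],
\]
so it suffices to bound this latter quantity by $\tfrac{1}{n^2\gamma^2}\sum_i \tfrac{v_i\|\mathbf{A}_i\|^2}{p_i}\|x-y\|^2$.

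Next I would import the inequality~(\ref{eq:forL3exo}) that was derived in the proof of Lemma~\ref{lm:esoexp}: writing $\nabla f_i(x)=\mathbf{A}_i\nabla\phi_i(\mathbf{A}_i^\top x)$, expanding $\theta_S^i=1/p_i$, and invoking the ESO inequality~(\ref{eq:ESOfirst}) with $h_i = \tfrac{1}{p_i}(\nabla\phi_i(\mathbf{A}_i^\top x)-\nabla\phi_i(\mathbf{A}_i^\top y))$ yields
\[
\mathbb{E}\!\left[\left\|\tfrac{1}{n}(\mathbf{G}(x)-\mathbf{G}(y))\theta_S\mathbf{I}_S e\right\|^2\right]\leq \frac{1}{n^2}\sum_{i=1}^n \frac{v_i}{p_i}\left\|\nabla\phi_i(\mathbf{A}_i^\top x)-\nabla\phi_i(\mathbf{A}_i^\top y)\right\|^2.
\]

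The final step is the key difference from Lemma~\ref{lm:esoexp}: instead of using the ``smooth + convex'' inequality via Lemma~\ref{lm:phiexp} to produce a Bregman-divergence-type bound, I would use only the $1/\gamma$-Lipschitz continuity of $\nabla\phi_i$ together with the operator norm of $\mathbf{A}_i$:
\[
\left\|\nabla\phi_i(\mathbf{A}_i^\top x)-\nabla\phi_i(\mathbf{A}_i^\top y)\right\|\leq \tfrac{1}{\gamma}\|\mathbf{A}_i^\top x-\mathbf{A}_i^\top y\|\leq \tfrac{1}{\gamma}\|\mathbf{A}_i\|\cdot\|x-y\|.
\]
Squaring this and substituting into the previous display immediately gives the claimed bound ${\cal L}_3\leq \tfrac{1}{n^2\gamma^2}\sum_{i=1}^n \tfrac{v_i\|\mathbf{A}_i\|^2}{p_i}$. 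There is no real obstacle here: the proof is a short composition of (i)~the variance-reduction drop from~(\ref{eq:boundL2}), (ii)~the ESO-based bound already established, and (iii)~a plain Lipschitz estimate in place of the convexity-based estimate used for $\mathcal{L}_1,\mathcal{L}_2$. The only point to be careful about is that Assumption~\ref{as:expL3} does not require convexity of $\phi_i$, so we genuinely must use the Lipschitz-gradient route rather than Lemma~\ref{lm:phiexp}.
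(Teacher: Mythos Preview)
Your proposal is correct and follows essentially the same approach as the paper: first reduce to bounding the full second moment via~(\ref{eq:boundL2}), then apply the ESO bound~(\ref{eq:forL3exo}), and finally use the $1/\gamma$-Lipschitz property of $\nabla\phi_i$ together with $\|\mathbf{A}_i^\top(x-y)\|\leq \|\mathbf{A}_i\|\,\|x-y\|$. The paper's proof is exactly this three-line argument; your added remark that convexity is not needed here (so one must use the plain Lipschitz route rather than Lemma~\ref{lm:phiexp}) is a helpful observation the paper leaves implicit.
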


\begin{proof}
	
	From (\ref{eq:forL3exo}), we have 
	\begin{eqnarray*}
		&& \mathbb{E}\left[\left\|\frac{1}{n}({\bf G}(x) - {\bf G}(y)){\theta}_{S}{\bf I}_{S}e \right\|^2 \right] \\ 
		&\leq& \frac{1}{n^2} \sum_{i=1}^n p_iv_i\cdot \frac{1}{p_i^2} \left\| \nabla\phi_i({\bf A}_i^\top x)-\nabla \phi_i({\bf A}_i^\top y)\right\|^2 \\ 
		&\leq& \frac{1}{n^2} \sum_{i=1}^n \frac{v_i}{p_i} \cdot \frac{\|{\bf A}_i\|^2}{\gamma^2} \|x-y\|^2. 
	\end{eqnarray*}
	
	This along with (\ref{eq:boundL2}) implies the result. 
	
\end{proof}

\subsection{Estimation for sampling with replacement $S$ }

\begin{lemma}\label{lm:L3sp}	
	Let ${\tilde p}_i >0$ in distribution ${\tilde D}$ for the sampling with replacement $S$, and $f_i$ be $L_i$-smooth. Then the ${\cal L}_3$ in Assumption \ref{as:expL3} satisfies 
	$$
	{\cal L}_3 \leq  \frac{1}{n^2\tau} \sum_{i=1}^n \frac{L_i^2}{{\tilde p}_i}.   
	$$
\end{lemma}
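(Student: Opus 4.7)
The plan is to reuse the variance decomposition that was already established for sampling with replacement in the proof of Lemma \ref{lm:L2sp} (equations (\ref{eq:forL2-2}) and (\ref{eq:forL3sp})), and then replace the convex-smooth inequality $\|\nabla f_i(x)-\nabla f_i(y)\|^2 \leq 2L_i(f_i(x)-f_i(y)-\langle \nabla f_i(y),x-y\rangle)$ by the weaker but assumption-free Lipschitz bound $\|\nabla f_i(x)-\nabla f_i(y)\|^2 \leq L_i^2 \|x-y\|^2$, which is the only tool available when $f_i$ is not assumed convex.

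Concretely, I would first recall that for sampling with replacement $\theta_S^i \equiv 1/(\tau \tilde p_i)$, and by the identity $\mathbb{E}\|X-\mathbb{E}[X]\|^2 = \mathbb{E}\|X\|^2 - \|\mathbb{E}[X]\|^2$ together with the expansion in (\ref{eq:forL2-2}), we have
$$
\mathbb{E}\!\left[\left\|\tfrac{1}{n}({\bf G}(x)-{\bf G}(y))\theta_S {\bf I}_S e - \tfrac{1}{n}({\bf G}(x)-{\bf G}(y))e\right\|^2\right] = \tfrac{1}{\tau}\,\mathbb{E}_{i\sim\tilde{\cal D}}\!\left[\left\|\tfrac{1}{n\tilde p_i}(\nabla f_i(x)-\nabla f_i(y))\right\|^2\right] - \tfrac{1}{\tau}\|\nabla f(x)-\nabla f(y)\|^2,
$$
which is at most $\tfrac{1}{\tau}\mathbb{E}_{i\sim\tilde{\cal D}}\left[\left\|\tfrac{1}{n\tilde p_i}(\nabla f_i(x)-\nabla f_i(y))\right\|^2\right]$ by dropping the nonpositive term, exactly as in (\ref{eq:forL3sp}).

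Next I would apply $L_i$-smoothness of each $f_i$ to bound $\|\nabla f_i(x)-\nabla f_i(y)\|^2 \leq L_i^2\|x-y\|^2$, take expectation with respect to $i\sim \tilde{\cal D}$ (which places weight $\tilde p_i$ on index $i$), and simplify:
$$
\tfrac{1}{\tau}\sum_{i=1}^n \tilde p_i \cdot \tfrac{1}{n^2\tilde p_i^2}\,L_i^2\,\|x-y\|^2 \;=\; \tfrac{1}{n^2 \tau}\sum_{i=1}^n \tfrac{L_i^2}{\tilde p_i}\,\|x-y\|^2.
$$
This gives the claimed upper bound on $\cL_3$.

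There is essentially no obstacle here: the only delicate point is making sure that in the decomposition for sampling with replacement the cross terms collapse correctly so that only the diagonal contribution $\mathbb{E}_{i\sim\tilde{\cal D}}\|\cdot\|^2$ remains after subtracting $\|\nabla f(x)-\nabla f(y)\|^2$, but this is exactly the computation already carried out in (\ref{eq:forL2-2})–(\ref{eq:forL3sp}) and can simply be cited. The rest is a one-line application of Lipschitz smoothness, so the proof is truly a short adaptation of the ${\cal L}_2$ estimate.
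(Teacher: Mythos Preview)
Your proposal is correct and is essentially identical to the paper's own proof: the paper also cites (\ref{eq:forL3sp}) to bound the variance by $\tfrac{1}{\tau}\mathbb{E}_{i\sim\tilde{\cal D}}\bigl[\|\tfrac{1}{n\tilde p_i}(\nabla f_i(x)-\nabla f_i(y))\|^2\bigr]$, writes out the expectation as a sum weighted by $\tilde p_i$, and then applies $\|\nabla f_i(x)-\nabla f_i(y)\|^2\leq L_i^2\|x-y\|^2$ to conclude.
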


\begin{proof}
	From (\ref{eq:forL3sp}),  we have 
	\begin{eqnarray*}
		&& \mathbb{E}\left[\left\|\frac{1}{n}({\bf G}(x) - {\bf G}(y)){\theta}_{S}{\bf I}_{S}e - \frac{1}{n}({\bf G}(x) - {\bf G}(y))e\right\|^2 \right] \nonumber \\
		&\leq&  \frac{\tau}{\tau^2} {\mathbb{E}}_{i \sim {\tilde {\cal D}}}\left[ \left\| \frac{1}{n{\tilde p}_i}(\nabla f_i(x) - \nabla f_i(y)) \right\|^2 \right] \\ 
		&\leq& \frac{1}{\tau} \sum_{i=1}^n \frac{1}{n^2 {\tilde p}_i} \|\nabla f_i(x) - \nabla f_i(y)\|^2 \\ 
		&\leq&  \frac{1}{n^2\tau} \sum_{i=1}^n \frac{L_i^2}{{\tilde p}_i} \|x - y\|^2. 
	\end{eqnarray*}
	
\end{proof}

\begin{proposition}\label{pro:L3sp}
	Let $f_i$ be $L_i$-smooth. For the sampling with replacement, let the number of copies be $\tau$. By choosing ${\tilde p}_i = L_i/\sum_{i=1}^nL_i$, we have 
	$$
	{\cal L}_3 \leq \frac{{\bar L}^2}{\tau}. 
	$$
\end{proposition}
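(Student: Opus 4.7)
The plan is a direct substitution into the general upper bound already established in Lemma~\ref{lm:L3sp}. Since Lemma~\ref{lm:L3sp} gives, for any proper distribution $\tilde{\mathcal{D}}$ with probabilities $\tilde p_i>0$ and $\tau$ independent copies, the bound
$$
\mathcal{L}_3 \;\leq\; \frac{1}{n^2\tau}\sum_{i=1}^n \frac{L_i^2}{\tilde p_i},
$$
all that remains is to evaluate the right-hand side for the specific importance-sampling-with-replacement distribution $\tilde p_i = L_i/\sum_{j=1}^n L_j$.

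First, I would note that $\sum_{j=1}^n L_j = n\bar L$ by the definition $\bar L = \tfrac{1}{n}\sum_{j=1}^n L_j$, so that $\tilde p_i = L_i/(n\bar L)$, and in particular $\tilde p_i>0$ as required (assuming all $L_i>0$; the degenerate case $L_i=0$ simply contributes a zero term and can be excluded from the sum without changing the bound). Then I would plug this choice into the Lemma~\ref{lm:L3sp} bound:
$$
\mathcal{L}_3 \;\leq\; \frac{1}{n^2\tau}\sum_{i=1}^n \frac{L_i^2}{L_i/(n\bar L)} \;=\; \frac{1}{n^2\tau}\sum_{i=1}^n n\bar L\, L_i \;=\; \frac{\bar L}{n\tau}\sum_{i=1}^n L_i \;=\; \frac{\bar L}{n\tau}\cdot n\bar L \;=\; \frac{\bar L^2}{\tau},
$$
which is the claimed inequality.

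There is essentially no obstacle here: the entire content of the proposition is the algebraic observation that $\tilde p_i \propto L_i$ is the distribution which minimizes $\sum_i L_i^2/\tilde p_i$ subject to $\sum_i \tilde p_i = 1$ (by Cauchy–Schwarz, $\sum_i L_i^2/\tilde p_i \geq (\sum_i L_i)^2 = n^2\bar L^2$, with equality for the stated choice). If desired, I would include this one-line Cauchy–Schwarz remark to motivate why this distribution is chosen, but it is not strictly needed for the stated inequality, which follows immediately from Lemma~\ref{lm:L3sp} by direct computation.
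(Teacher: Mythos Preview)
Your proposal is correct and follows essentially the same approach as the paper: both invoke Lemma~\ref{lm:L3sp} and then evaluate the bound at the specific choice $\tilde p_i = L_i/\sum_j L_j$. The paper additionally frames this choice as the minimizer of the constrained problem $\min \sum_i L_i^2/\tilde p_i$ subject to $\sum_i \tilde p_i = 1$, which is exactly the Cauchy--Schwarz remark you optionally include.
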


\begin{proof}
	For the following linearly constrained minimization problem 
	\begin{equation}
	\begin{array}{rcl}
	\min & &  \frac{1}{n^2\tau} \sum_{i=1}^n \frac{L_i^2}{{\tilde p}_i} \\
	{\rm s.t.\ }& &  \sum_{i=1}^n {\tilde p}_i = 1, \   0< p_i \leq 1, \forall i, 
	\end{array} \nonumber 
	\end{equation}
	it is not hard to see that the optimal solution is ${\tilde p}_i = \frac{L_i}{\sum_{i=1}^nL_i}$. Hence, by Lemma \ref{lm:L3sp}, we have ${\cal L}_3 \leq \frac{{\bar L}^2}{\tau}$. 
	
\end{proof}

\section{Efficient Implementation}\label{sec:EI}
The delayed update is a standard technique for more efficiency when the Jacobian matrix $\bG(x)$ is sparse.  For the sake of completeness, we provide details in the case when 
$$
\psi(x)\equiv\frac{\lambda_2}{2}\|x\|^2+\lambda_1\|x\|_1,
$$
for some $\lambda_2>0$.
For Algorithm~\ref{alg:lsvrg}, the $i$th coordinate of the iterates $\{x^k\}$ satisfy:
\begin{align}\label{a:xkp}
x_i^{k+1}=\arg\min\left\{ \frac{\lambda_2}{2}x^2 +\lambda_1 |x|+\frac{1}{2\eta}\left(x-x^k_i+\eta g_i^k\right)^2  \right\}.
\end{align}
Let $t_0<t_1$ be two positive integers.
Suppose that
$$
g_i^k=\hat g_i,\enspace \forall k=t_0,\dots,t_1-1,
$$
then the value of $x_{i}^{t_1}$ can be obtained without explicitly computing the value of $x_i^{t_0+1},\dots,x_i^{t_1-1}$. 
 The details of computation can be found in~\cite{SPDC}. For convenience we give the pseudocode in Algorithm~\ref{alg:du}, so that
$$
x_i^{t_1}=\mathrm{delayed\_update}(t_0, t_1, \hat g_i, x_i^{t_0}, \eta).
$$
Note that the complexity of Algorithm~\ref{alg:du} is $O(\log(t_1-t_0))$~\cite{SPDC} while direct computation of $x_i^{t_1}$   from 
$x_i^{t_0}$ yields a time
 complexity $O(t_1-t_0)$. This is how the computation load can be reduced when $\bG(x)$ is sparse.
\begin{algorithm}[h]
	\caption{ $\tilde x=$delayed\_update($t_0$, $t_1$, $u$, $x$, $\eta$)}
	\label{alg:du}
	\begin{algorithmic}[1]
	\If{$t_1=t_0$} $\tilde x=x$;
	return;
	\EndIf
	        \State $\alpha={(1+\eta\lambda_2)^{t_0-t_1} }$
		\If{ $x=0$}  
		   \If{$\lambda_1+u<0$}
		    $\tilde x=\alpha x-(1-\alpha)(u+\lambda_1)/\lambda_2$
		    \EndIf
		    \If{$\lambda_1+u>0$}
		    $\tilde x=\alpha x-(1-\alpha)(u-\lambda_1)/\lambda_2$
		    \EndIf
		    \If{$\lambda_1+u=0$}
		      $\tilde x=0$
		    \EndIf
		\Else 
		   \If{$x>0$}
	          	\If{$\lambda_1+u\leq 0$}
	                 	$\tilde x=\alpha x-(1-\alpha)(u+\lambda_1)/\lambda_2$
		        \Else 
		               ~ $t=t_0-\log(\frac{1+\lambda_2 x}{\lambda_1+u})\log(1+\eta\lambda_2)$
		                \If{$t<t_1$} \\ \qquad\qquad\qquad\qquad $t'=\lfloor t \rfloor $\\ \qquad\qquad\qquad\qquad$\alpha'={(1+\eta\lambda_2)^{t_0-t'} }$\\
		                \qquad\qquad\qquad\qquad $x'=\alpha' x-(1-\alpha')(u+\lambda_1)/\lambda_2$
		                \\\qquad\qquad\qquad\qquad $x''=\arg\min\left\{ \frac{\lambda_2}{2}x^2 +\lambda_1 |x|+\frac{1}{2\eta}\left(x-x'+u\right)^2  \right\}$\\
		              \qquad\qquad\qquad\qquad  $\tilde x=\mathrm{delayed\_update}( t'+1, t_1, u, x'',\eta)$
		                 \Else \\
		       \qquad\qquad\qquad\qquad  $\tilde x=\alpha x-(1-\alpha)(u+\lambda_1)/\lambda_2$
		                 \EndIf
		         \EndIf
		         \Else \\
		          \qquad\quad  $\tilde x=-\mathrm{delayed\_update}( t_0, t_1, -u, -x, \eta)$
		   \EndIf
		\EndIf
		\State Output $\tilde x$
			\end{algorithmic}
\end{algorithm}

\subsection{Efficient implementation for L-Katyusha}
For Algorithm~\ref{alg:lkatyusha}, the $i$th coordinate of the iterates $\{x^k, y^k, z^k\}$ satisfy:
$$\left\{\begin{array}{l}
x_i^k = \theta_1 z_i^k + \theta_2 w_i^k + (1-\theta_1 -\theta_2)y_i^k\\
z_i^{k+1} = 
\arg\min\left\{ \frac{\lambda_2}{2}x^2 +\lambda_1 |x|+\frac{(1+\eta\sigma_1)L}{2\eta}\left(x-\frac{\eta \sigma_1 x^k_i+z_i^k}{1+\eta \sigma_1}+ \frac{\eta}{(1+\eta\sigma_1)L g_i^k} \right)^2  \right\}\\
y_i^{k+1} = x_i^k + \theta_1 (z_i^{k+1} - z_i^k)
\end{array}\right.$$
We eliminate $x^k_i$ and obtain:
$$\left\{\begin{array}{l}
z_i^{k+1} = 
\arg\min\left\{ \frac{\lambda_2}{2}x^2 +\lambda_1 |x|+\frac{(1+\eta\sigma_1)L}{2\eta}\left(x-\frac{
(\eta\sigma_1\theta_1+1) z_i^k + \eta\sigma_1 \theta_2 w_i^k + \eta\sigma_1(1-\theta_1 -\theta_2)y_i^k}{1+\eta \sigma_1}+ \frac{\eta g_i^k}{(1+\eta\sigma_1)L} \right)^2  \right\}\\
y_i^{k+1} = \theta_1 z_i^{k+1} + \theta_2 w_i^k + (1-\theta_1 -\theta_2)y_i^k\end{array}\right.$$
\begin{itemize}
\item
If $\lambda_1=0$, then the above system can be written as
 $$\left\{\begin{array}{l}
z_i^{k+1} = \frac{
(\eta\sigma_1\theta_1+1) L  }{
\eta\lambda_1 + L(1+\eta\sigma_1)} z_i^k + \frac{\eta\sigma_1(1-\theta_1 -\theta_2)L} {
\eta\lambda_1 + L(1+\eta\sigma_1)}y_i^k + \frac{ \eta\sigma_1 \theta_2  Lw_i^k -\eta g_i^k }{
\eta\lambda_1 + L(1+\eta\sigma_1)}\\
y_i^{k+1} =  \frac{\theta_1
(\eta\sigma_1\theta_1+1) L  }{
\eta\lambda_1 + L(1+\eta\sigma_1)}  z_i^{k}  + \left(1-\theta_1 -\theta_2 +\frac{\theta_1\eta\sigma_1(1-\theta_1 -\theta_2)L} {
\eta\lambda_1 + L(1+\eta\sigma_1)}\right) y_i^k +\theta_2 w_i^k+\frac{ \theta_1(\eta\sigma_1 \theta_2  Lw_i^k -\eta g_i^k) }{
\eta\lambda_1 + L(1+\eta\sigma_1)}\end{array}\right.$$
Then if $$
g_i^k=\hat g_i, w_i^k=\hat w_i \enspace \forall k=t_0,\dots,t_1-1,
$$
$z_i^{t_1}$ and $y_i^{t_1}$ can be computed by
\begin{align}\label{a:z}
\begin{pmatrix}
z_i^{t_1}\\y_{i}^{t_1}
\end{pmatrix}=A^{t_1-t_0} \begin{pmatrix}
z_i^{t_0}\\y_{i}^{t_0}
\end{pmatrix}+ \left( \sum_{s=0}^{t_1-t_0-1} A^s \right) b
\end{align}
with $$
A=\begin{pmatrix}
\frac{
(\eta\sigma_1\theta_1+1) L  }{
\eta\lambda_1 + L(1+\eta\sigma_1)}    & \frac{\eta\sigma_1(1-\theta_1 -\theta_2)L} {
\eta\lambda_1 + L(1+\eta\sigma_1)} \\
\frac{\theta_1
(\eta\sigma_1\theta_1+1) L  }{
\eta\lambda_1 + L(1+\eta\sigma_1)} & 1-\theta_1 -\theta_2+\frac{\theta_1\eta\sigma_1(1-\theta_1 -\theta_2)L} {
\eta\lambda_1 + L(1+\eta\sigma_1)}
\end{pmatrix},\enspace b=\begin{pmatrix}
+ \frac{ \eta\sigma_1 \theta_2  Lw_i^k -\eta g_i^k }{
\eta\lambda_1 + L(1+\eta\sigma_1)}\\
\frac{ \theta_1(\eta\sigma_1 \theta_2  Lw_i^k -\eta g_i^k) }{
\eta\lambda_1 + L(1+\eta\sigma_1)}
\end{pmatrix}.
$$
It is clear that~\eqref{a:z} can be computed in $O(\log(t_1-t_0))$ time.
\item  If $\lambda_1>0$, we need to require $\sigma_1=0$ to 
have
reduced computation load.  In this case, we have a simplified recursive relation:
$$\left\{\begin{array}{l}
z_i^{k+1} = 
\arg\min\left\{ \frac{\lambda_2}{2}x^2 +\lambda_1 |x|+\frac{L}{2\eta}\left(x-z_i^k + \frac{\eta g_i^k}{L} \right)^2  \right\}\\
y_i^{k+1} = \theta_1 z_i^{k+1} + \theta_2 w_i^k + (1-\theta_1 -\theta_2)y_i^k\end{array}\right.$$

Suppose that $$
g_i^k=\hat g_i, w_i^k=\hat w_i \enspace \forall k=t_0,\dots,t_1-1,
$$
Since the $z_i^k$ follows the same recursive formula as~\eqref{a:xkp}, we can apply 
Algorithm~\ref{alg:du} to compute $z_i^{t_1}$, i.e., 
$$
z_i^{t_1}=\mathrm{delayed\_update}(t_0, t_1, \hat g_i, z_i^{t_0},\eta/L).
$$
Let $\theta_3=1-\theta_1-\theta_2$, then for any integers $k\geq 0$ land $s>0$,
$$
y_i^{k+s}=\theta_1\left(z_i^{k+s}+\theta_3 z_i^{k+s-1}+\dots+\theta_3^{s-1}z_i^{k+1}\right)
+\theta_2\left(1+\theta_3+\dots+\theta_3^{s-1}\right) w_i^k +\theta_3^s y_i^k.
$$
If 
\begin{align}\label{a:ffd}
z_i^{k+l}=q^l(z_i^k+h)-h,\enspace \forall l=1,\dots,s,
\end{align}
for some $q>0$ and $h\in \R$, then
\begin{align}\label{a:tisd}
y_i^{k+s}&=\theta_1\left(
\sum_{l=1}^s \theta_3^{s-l} \left(q^l(z_i^k+h)-h\right)\right)+\frac{\theta_2(1-\theta_3^s)}{1-\theta_3}
w_i^k+\theta_3^s y_i^k\\ \notag
&=\theta_1
\theta_3^s (z_i^k+h) \sum_{l=1}^s \left(q\theta_3^{-1}\right)^l+\frac{(\theta_2 w_i^k -\theta_1 h)(1-\theta_3^s)}{1-\theta_3}
+\theta_3^s y_i^k.
\end{align}
Based on the above computation we can write down the efficient implementation for L-Katyusha, given 
in Algorithm~\ref{alg:ecLK}. And we have:
$$
(y_i^{t_1},z_i^{t_1})=\mathrm{delayed\_update2}( t_0, t_1, \hat g_i, y_i^{t_0}, z_i^{t_1}, \hat w_i, \eta/L).
$$
It is easy to check that the computational complexity of Algorithm~\ref{alg:ecLK} is $O(\log(t_1-t_0))$.
\begin{algorithm}[h]
	\caption{ $(\tilde y,\tilde z)=$delayed\_update2($t_0$, $t_1$, $u$, $y$,  $z$,   $w$, $\eta$)}
	\label{alg:ecLK}
	\begin{algorithmic}[1]
	\If{$t_1=t_0$} $\tilde z=z$; $\tilde y=y$; 
	return;
	\EndIf
	        \State $\alpha={(1+\eta\lambda_2)^{t_0-t_1} }$
	        \State $q=1/(1+\eta\lambda_2)$
		\If{ $x=0$}  
		   \If{$\lambda_1+u<0$} $\tilde z=\alpha z-(1-\alpha)(u+\lambda_1)/\lambda_2$; $h=(u+\lambda_1)/\lambda_2$
		    \EndIf
		    \If{$\lambda_1+u>0$}
		    $\tilde x=\alpha z-(1-\alpha)(u-\lambda_1)/\lambda_2$; $h=(u-\lambda_1)/\lambda_2$
		    \EndIf
		    \If{$\lambda_1+u=0$}
		      $\tilde x=0$; $q=0$; $h=0$
		    \EndIf \\
		 $\qquad\tilde y=\theta_1
\theta_3^{t_1-t_0} (z+h) \sum_{l=1}^{t_1-t_0} \left(q\theta_3^{-1}\right)^l+\frac{(\theta_2 w -\theta_1 h)(1-\theta_3^{t_1-t_0})}{1-\theta_3}
+\theta_3^{t_1-t_0} y.$
		\Else  \\
		  $\qquad h=(u+\lambda_1)/\lambda_2$
		   \If{$x>0$}
	          	\If{$\lambda_1+u\leq 0$}\\
	                 	$\qquad\qquad\tilde z=\alpha z-(1-\alpha)(u+\lambda_1)/\lambda_2$\\ $\qquad\qquad\tilde y=\theta_1
\theta_3^{t_1-t_0} (z+h) \sum_{l=1}^{t_1-t_0} \left(q\theta_3^{-1}\right)^l+\frac{(\theta_2 w -\theta_1 h)(1-\theta_3^{t_1-t_0})}{1-\theta_3}
+\theta_3^{t_1-t_0} y.$
		        \Else 
		               ~ $t=t_0-\log(\frac{1+\lambda_2 z}{\lambda_1+u})\log(1+\eta\lambda_2)$
		                \If{$t<t_1$} \\ \qquad\qquad\qquad\qquad $t'=\lfloor t \rfloor $\\ \qquad\qquad\qquad\qquad$\alpha'={(1+\eta\lambda_2)^{t_0-t'} }$\\
		                \qquad\qquad\qquad\qquad $z'=\alpha' z-(1-\alpha')(u+\lambda_1)/\lambda_2$\\
		                 \qquad\qquad\qquad\qquad$y'=\theta_1
\theta_3^{t'-t_0} (z+h) \sum_{l=1}^{t'-t_0} \left(q\theta_3^{-1}\right)^l+\frac{(\theta_2 w -\theta_1 h)(1-\theta_3^{t'-t_0})}{1-\theta_3}
+\theta_3^{t'-t_0} y.$
		                \\\qquad\qquad\qquad\qquad $z''=\arg\min\left\{ \frac{\lambda_2}{2}x^2 +\lambda_1 |x|+\frac{1}{2\eta}\left(x-z'+u\right)^2  \right\}$\\
		                \qquad\qquad\qquad\qquad $y''= \theta_1 z'' + \theta_2 w + (1-\theta_1 -\theta_2)y'$\\
		              \qquad\qquad\qquad\qquad  $(\tilde y, \tilde z)=\mathrm{delayed\_update2}( t'+1, t_1, u, y'',z'',w, \eta)$
		                 \Else \\
		       \qquad\qquad\qquad\qquad  $ \tilde z=\alpha z-(1-\alpha)(u+\lambda_1)/\lambda_2$\\
		        \qquad\qquad\qquad\qquad  $\tilde y=\theta_1
\theta_3^{t_1-t_0} (z+h) \sum_{l=1}^{t_1-t_0} \left(q\theta_3^{-1}\right)^l+\frac{(\theta_2 w -\theta_1 h)(1-\theta_3^{t_1-t_0})}{1-\theta_3}
+\theta_3^{t_1-t_0} y.$
		                 \EndIf
		         \EndIf
		         \Else \\
		          \qquad\quad  $(\tilde y, \tilde z)=-\mathrm{delayed\_update2}( t_0, t_1, -u, -y,-z,w, \eta)$
		   \EndIf
		\EndIf
		\State Output $(\tilde y,\tilde z)$
			\end{algorithmic}
\end{algorithm}
\end{itemize}

 \subsection{Efficient implementation for Katyusha}
As mentioned, one major difference between the original Katyusha~\cite{Katyusha} and our loopless variant is in the update of reference point. Let $m$ be the size of inner loop in Katyusha. After $s$ outer loops, Katyusha requires to compute a convex combination of $y^k$:
$$
\tilde x^{s+1}=\frac{\sum_{j=0}^{m-1} \theta^j y^{sm+j+1}}{\sum_{j=0}^{m-1}\theta^j}
$$
for some $\theta>1$.  For any $1\leq t\leq m$, define:
$$
\hat x^{t}=\frac{\sum_{j=0}^{t-1} \theta^j y^{sm+j+1}}{\sum_{j=0}^{t-1}\theta^j}.
$$
Then 
$$
\hat x^{t+1}=\left( 1-\frac{\theta^t-\theta^{t-1}}{\theta^{t}-1} \right)\hat x^t+\frac{\theta^t-\theta^{t-1}}{\theta^{t}-1} y^{sm+t},
$$
Suppose that\footnote{Recall that our $g^k$ corresponds to $\tilde \nabla_k$ in Katyusha~\cite{Katyusha}.} $$
g_i^{sm+k}=\hat g_i, \enspace \forall k=t_0,\dots,t_1-1.
$$
In order to compute $\hat x^{t_1}$ from $\hat x^{t_0}$ in $O(\log(t_1-t_0))$, we consider the case when $\sigma_1=0$. First note that
$$
\hat x^{t_1}=\left( 1-\frac{\theta^{t_1}-\theta^{t_0}}{\theta^{t_1}-1} \right)\hat x^{t_0}+\frac{y^{sm+t_0+1}+\theta y^{sm+t_0+2}+\dots+\theta^{t_1-t_0-1}y^{sm+t_1}}{\theta^{t_1-t_0-1}+\dots+\theta^{-t_0}}
$$
Assume that
\begin{align}\label{a:ffd}
z_i^{sm+t_0+l}=q^l(z_i^{sm+t_0}+h)-h,\enspace \forall l=1,\dots,t_1-t_0.
\end{align}
The same as~\eqref{a:tisd} here we have
\begin{align}\notag
&y^{sm+t_0+1}+\theta y^{sm+t_0+2}+\dots+\theta^{t_1-t_0-1}y^{sm+t_1}
\\&= \sum_{k=1}^{t_1-t_0} \theta^{k-1} \left(\theta_1
\theta_3^k (z_i^{sm+t_0}+h) \sum_{l=1}^k \left(q\theta_3^{-1}\right)^l+\frac{(\theta_2 w_i^k -\theta_1 h)(1-\theta_3^k)}{1-\theta_3}
+\theta_3^k y_i^{sm+t_0}\right). \label{a:dfe}
\end{align}
After rearranging, we can compute~\eqref{a:dfe} and then $x^{t_1}$ from $x^{t_0}$ in $O(\log(t_1-t_0))$ time when~\eqref{a:ffd} holds. Then we can update $\hat x^{t}$ in the same efficient way as we update
the three inner iterates $\{x^k,y^k,z^k\}$ with Algorithm~\ref{alg:ecLK}. We omit further details as this
is not the main topic of our paper. However, the above discussion shows that the implementation
 of original
Katyusha is more complicated than our loopless variant, due to the use of weighted average as reference point.

\section{More Experimental Results}
\begin{figure}[!ht]
   \subfigure[$\lambda_2=1$]{ \label{5d1}  \includegraphics[scale=0.38]{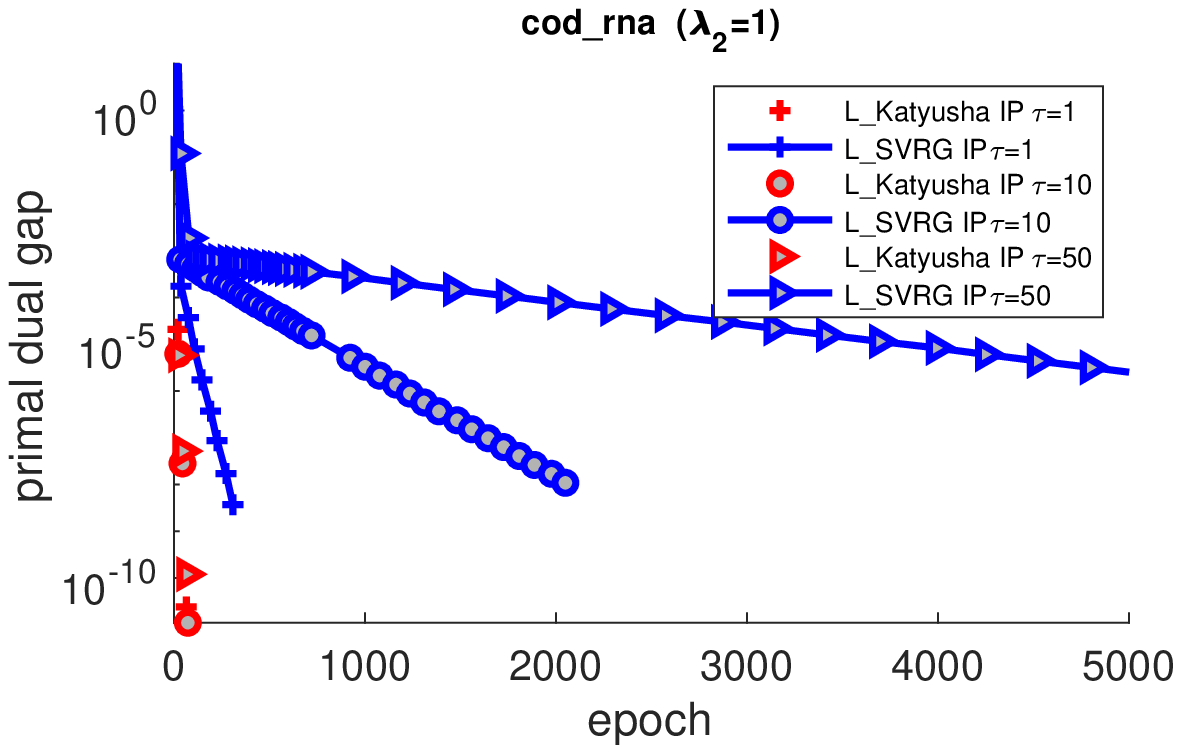}}
  \subfigure[$\lambda_2=10^{-2}$]{\label{5d2}\includegraphics[scale=0.38]{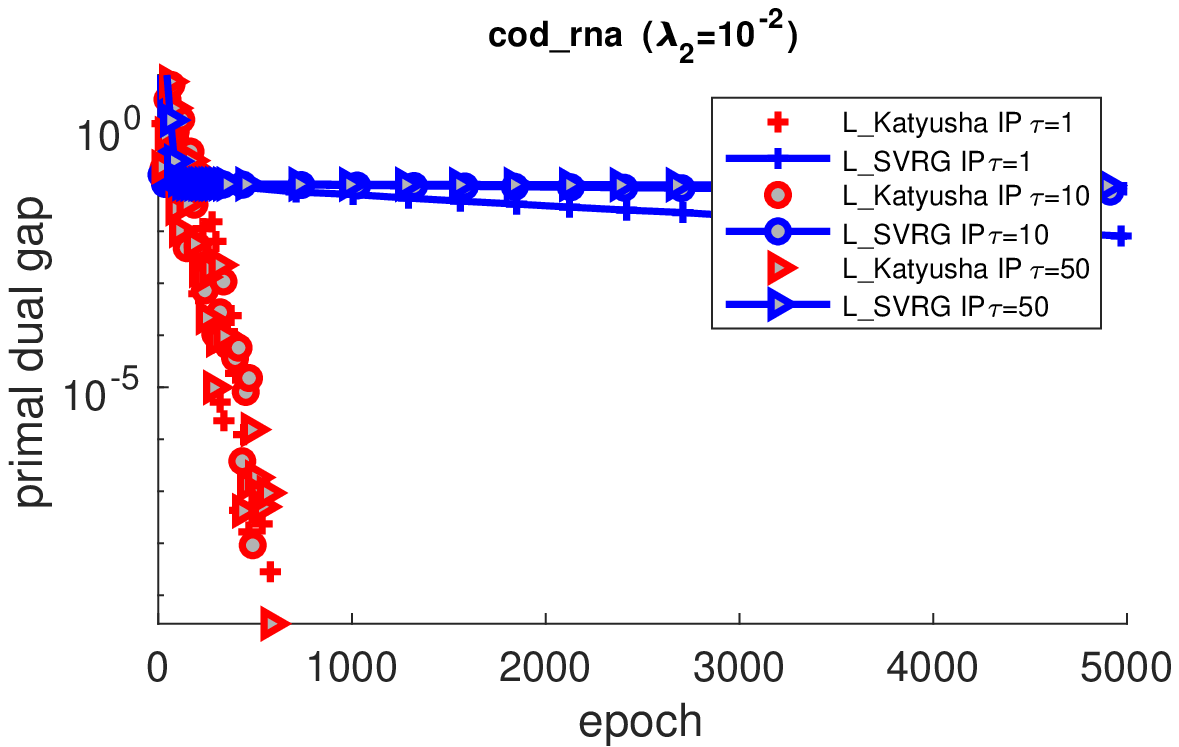}}
  \subfigure[$\lambda_2=10^{-4}$]{\label{5d3}\includegraphics[scale=0.38]{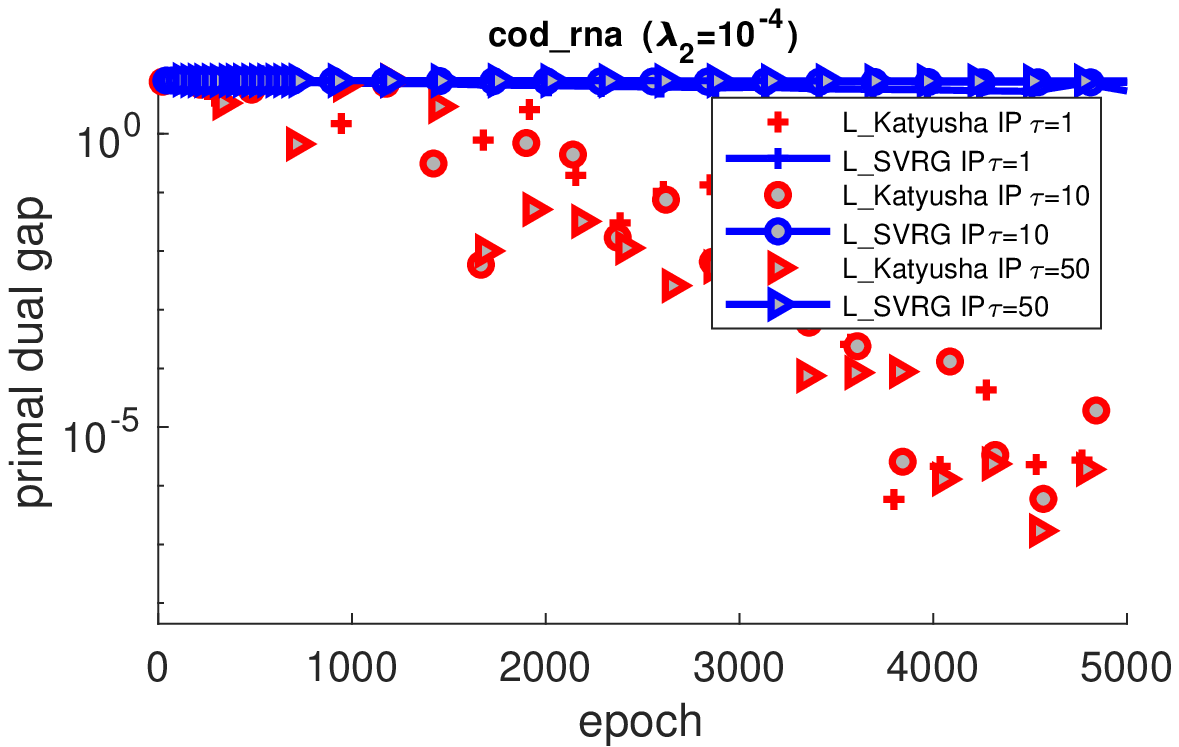}}
\caption{L-SVRG V.S. L-Katyusha, cod-rna}\label{fig5}
 \end{figure}

\begin{figure}[!ht]
 \subfigure[uniform V.S. IP]{\label{3d1}\includegraphics[scale=0.38]{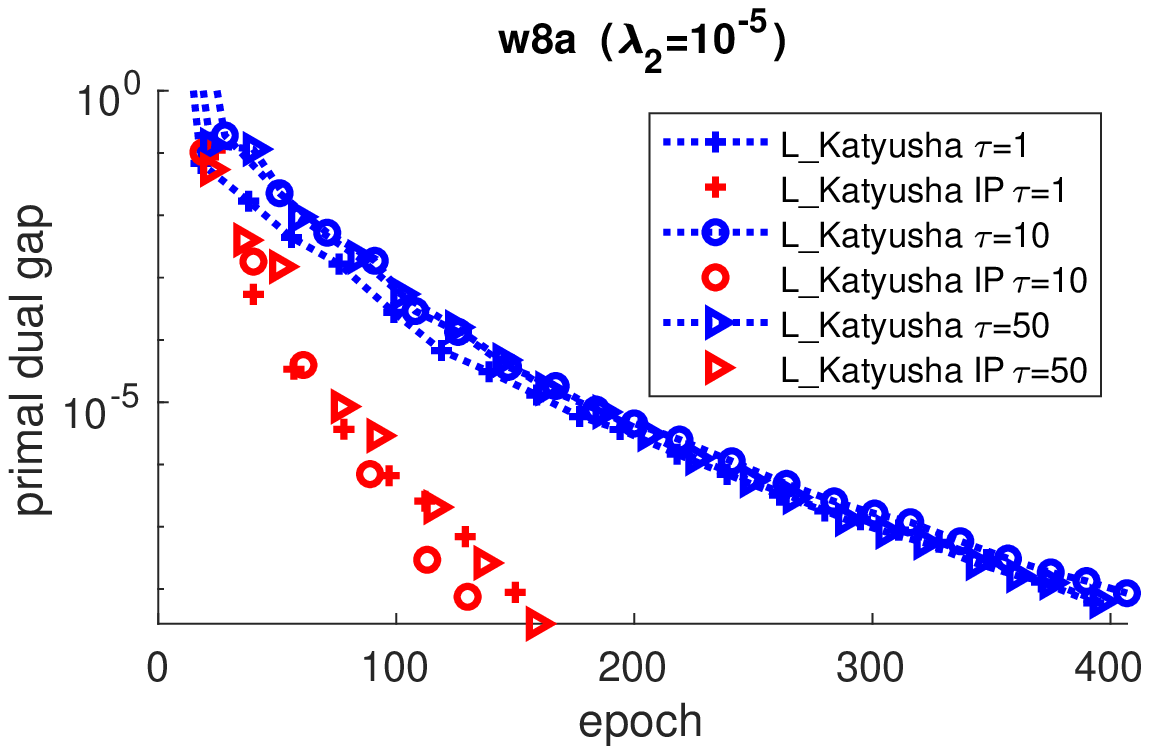}}
     \subfigure[different $p$, epoch]{\label{3d2} \includegraphics[scale=0.38]{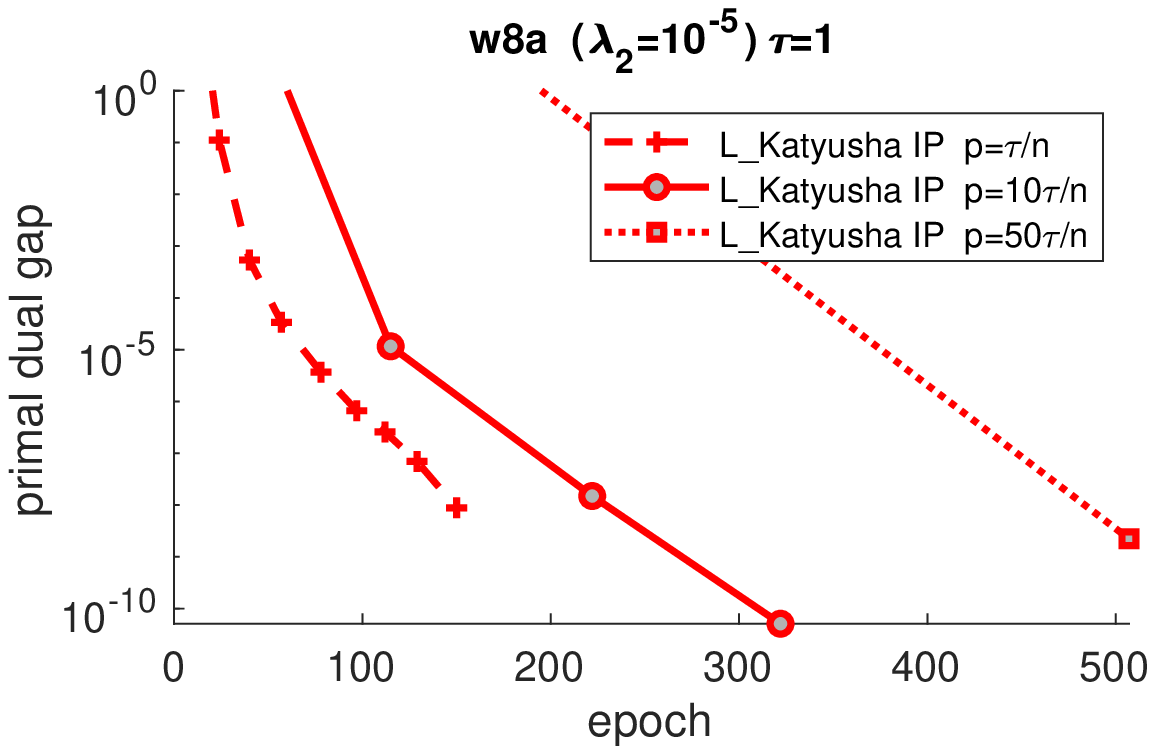}}
    \subfigure[different $p$, time]{\label{3d3}  \includegraphics[scale=0.38]{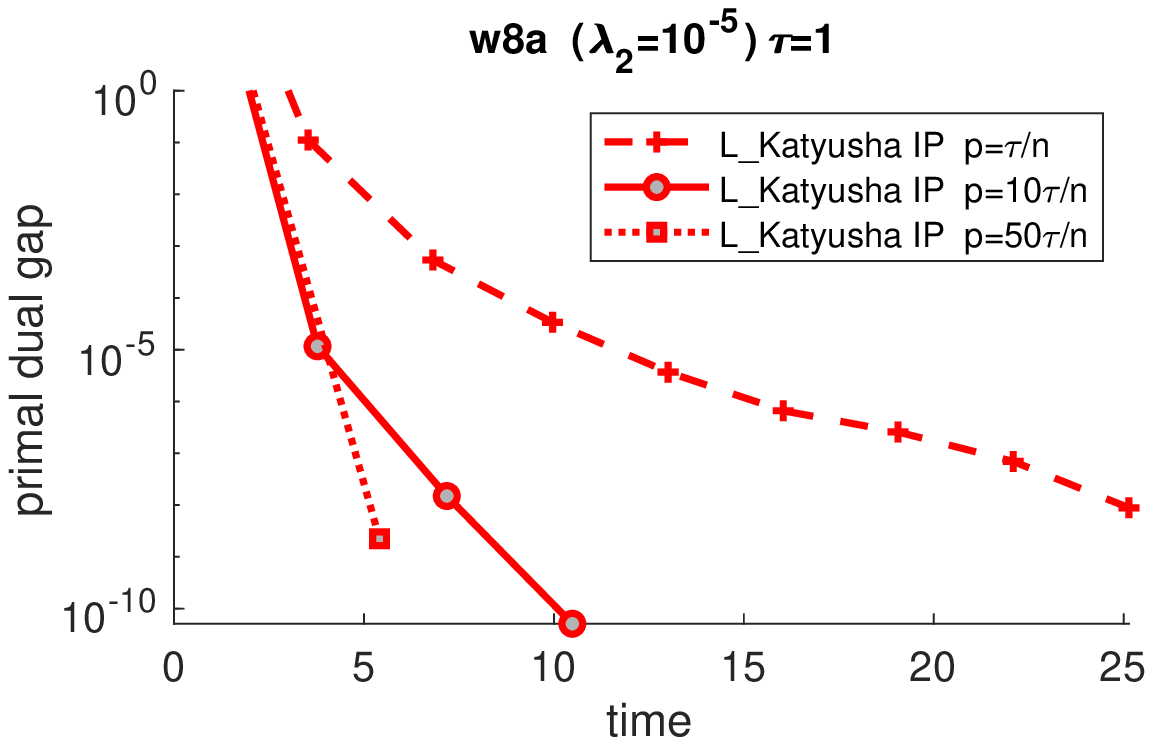}}
   \caption{w8a}\label{fig3}
 \end{figure}

 \begin{figure}[!ht]
 \subfigure[$\tau=1$]{\label{11d1}\includegraphics[scale=0.38]{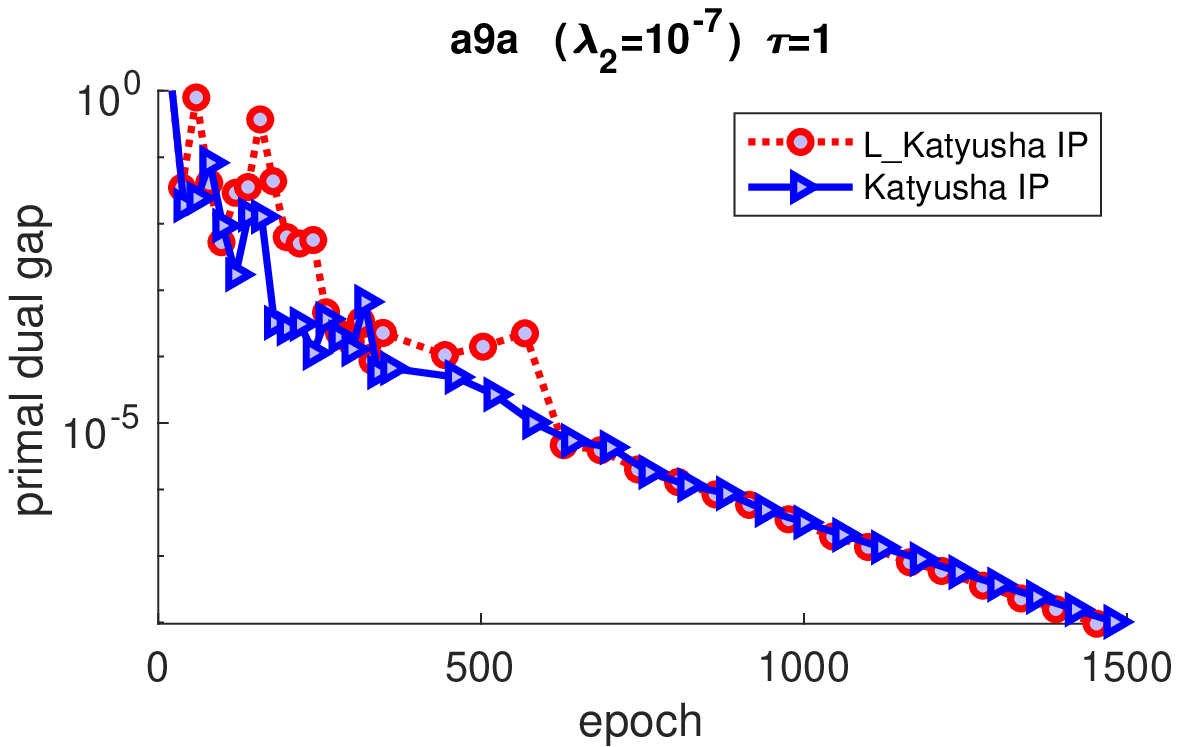}}
  \subfigure[$\tau=10$]{  \label{11d2}  \includegraphics[scale=0.38]{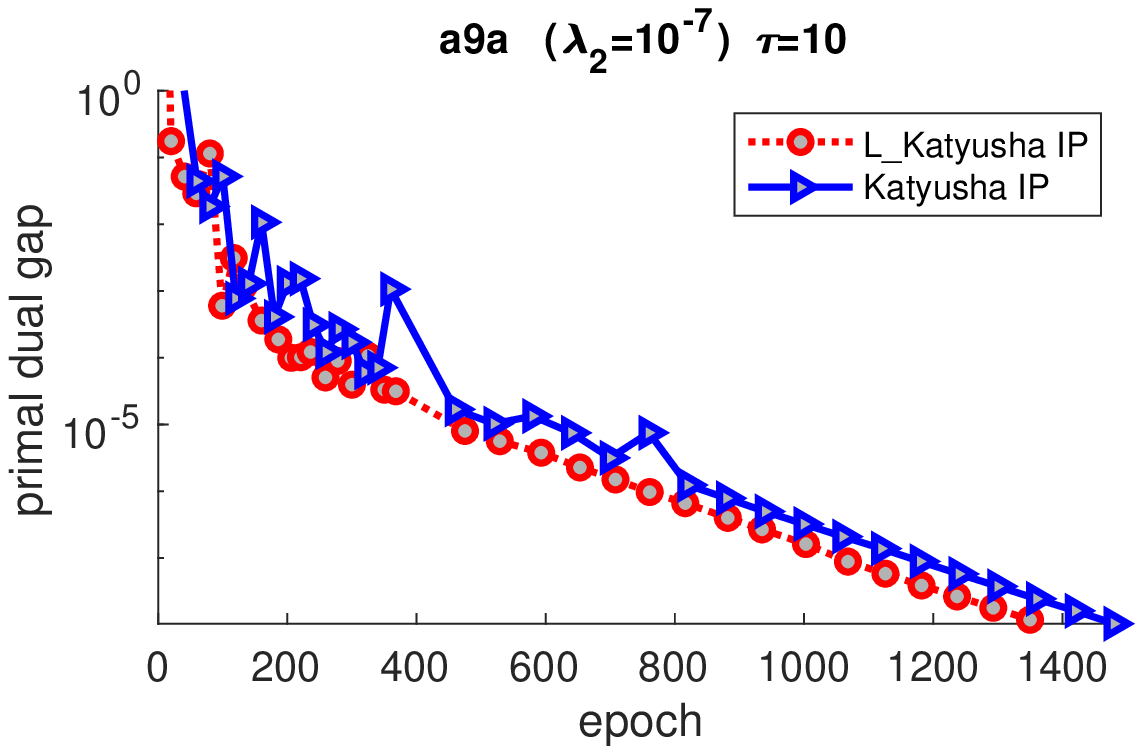}}
    \subfigure[$\tau=50$]{ \label{11d3} \includegraphics[scale=0.38]{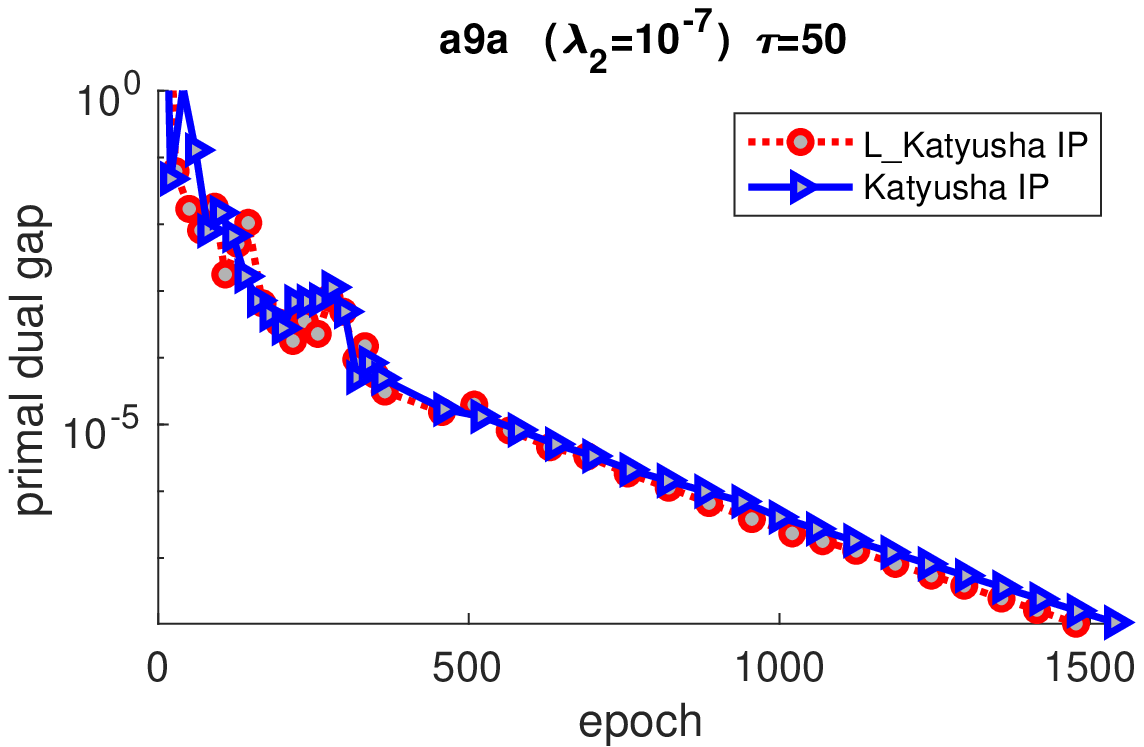}}
    \caption{L-Katyusha V.S. Katyusha, epoch plot, real-sim}\label{fig11}
 \end{figure}
 \vspace{-2cm}
\begin{figure}[!ht]
 \subfigure[$\tau=1$]{\label{12d1}\includegraphics[scale=0.38]{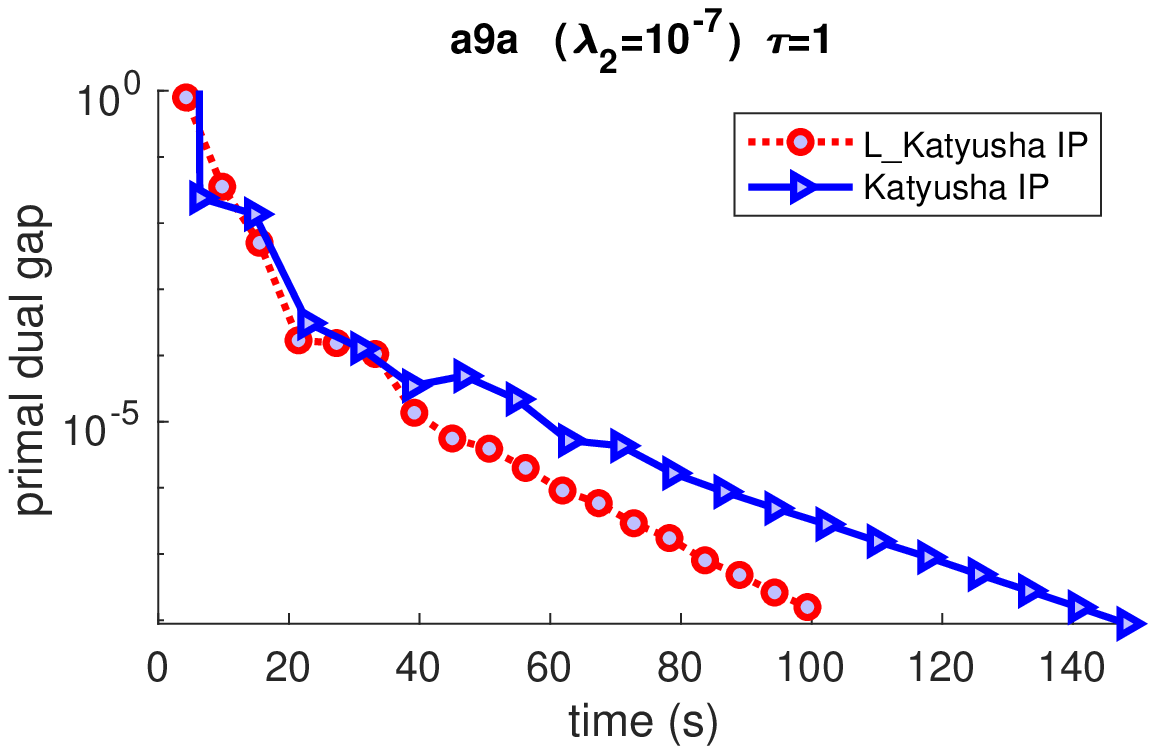}}
  \subfigure[$\tau=10$]{  \label{12d2}  \includegraphics[scale=0.38]{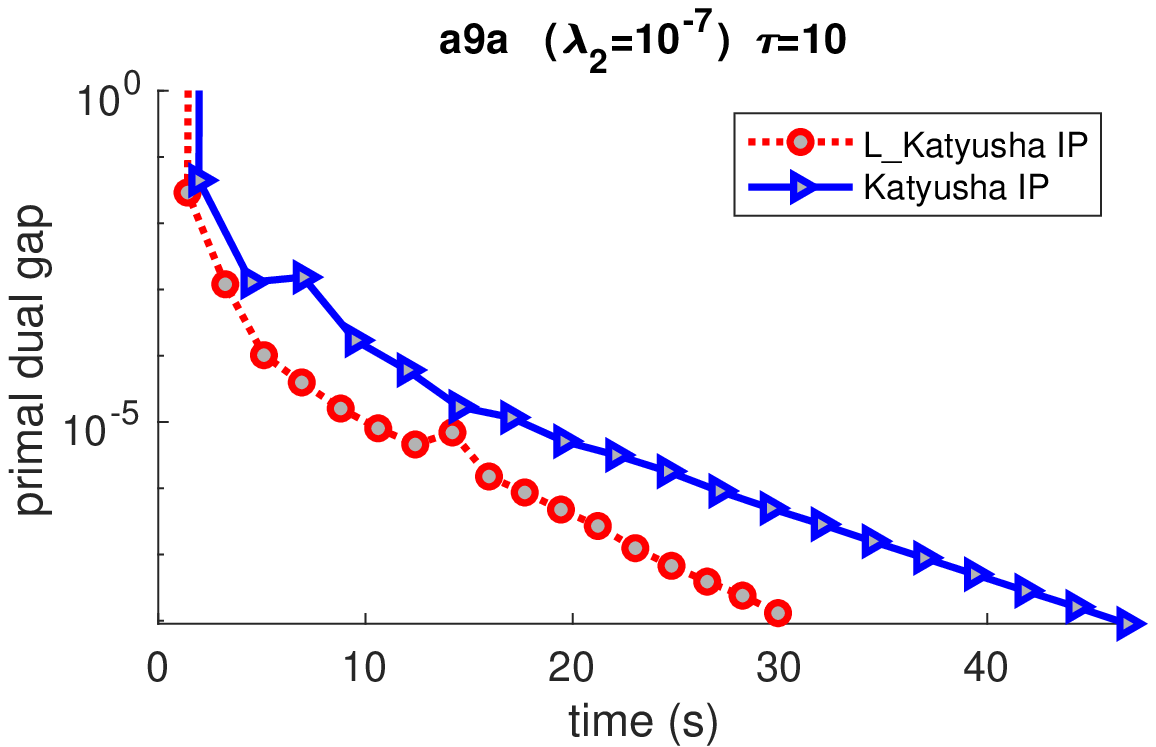}}
    \subfigure[$\tau=50$]{ \label{12d3} \includegraphics[scale=0.38]{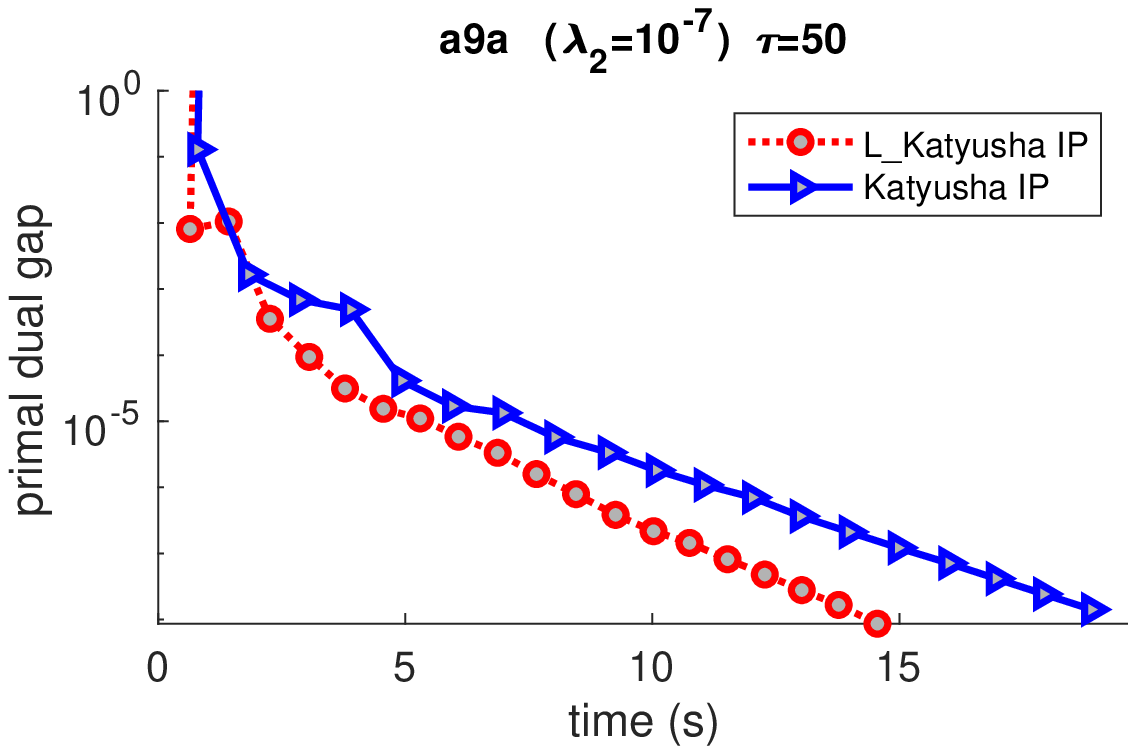}}
    \caption{L-Katyusha V.S. Katyusha, time plot, real-sim}\label{fig12}
 \end{figure}
 
 \begin{figure}[!ht]
 \subfigure[$\tau=1$]{\label{13d1}\includegraphics[scale=0.38]{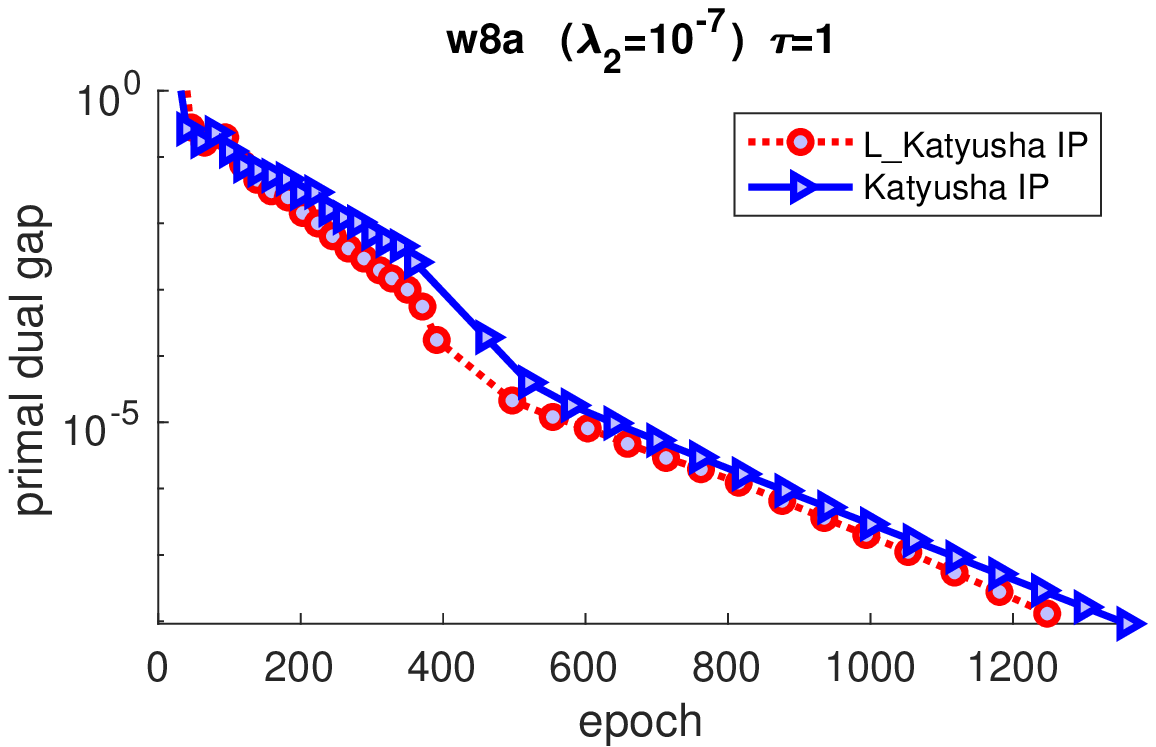}}
  \subfigure[$\tau=10$]{  \label{13d2}  \includegraphics[scale=0.38]{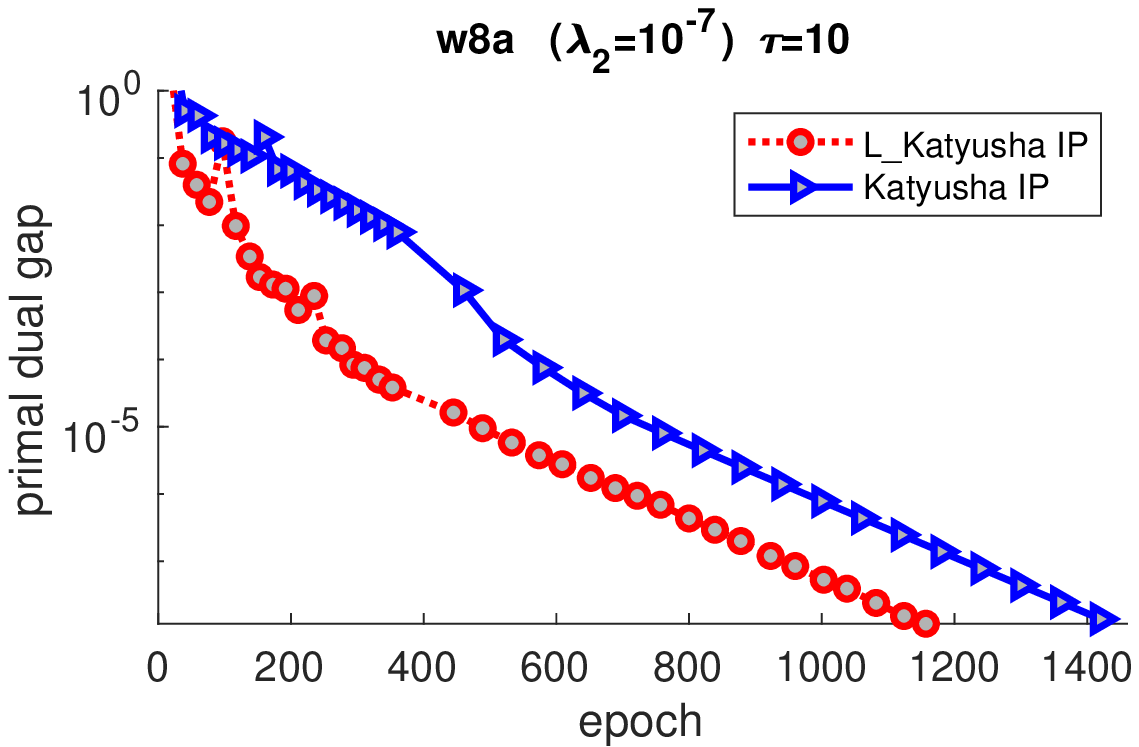}}
    \subfigure[$\tau=50$]{ \label{13d3} \includegraphics[scale=0.38]{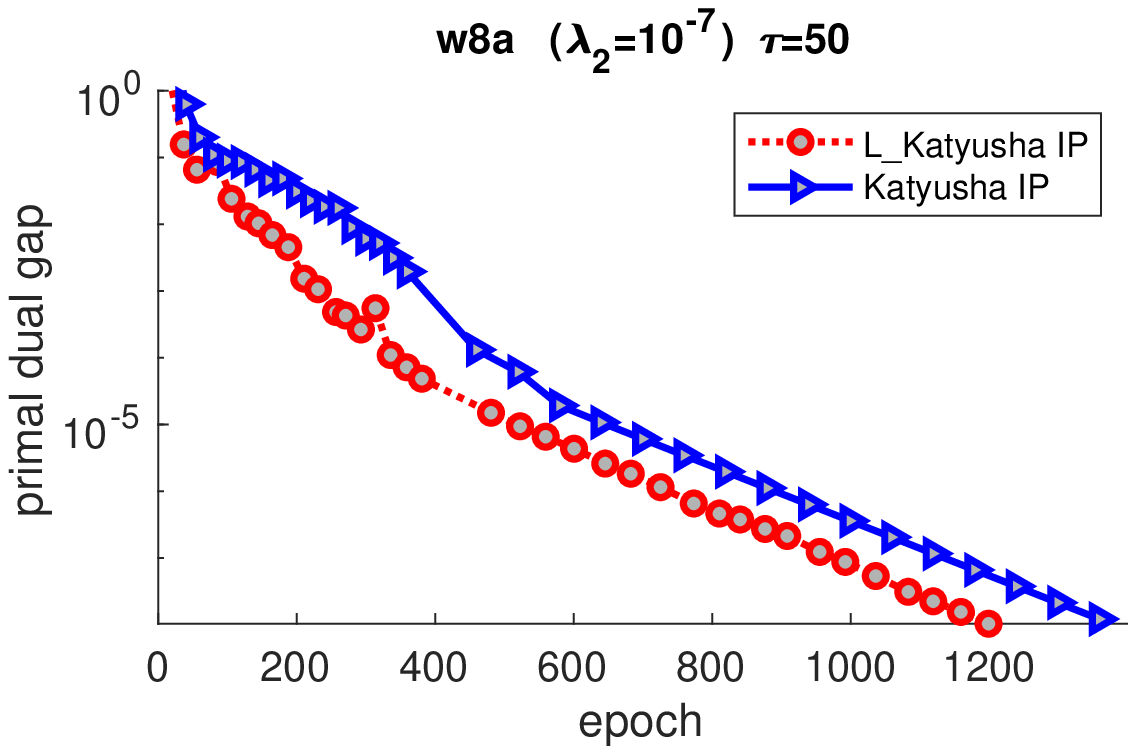}}
    \caption{L-Katyusha V.S. Katyusha, epoch plot, astro\_ph}\label{fig13}
 \end{figure}
\begin{figure}[!ht]
 \subfigure[$\tau=1$]{\label{14d1}\includegraphics[scale=0.38]{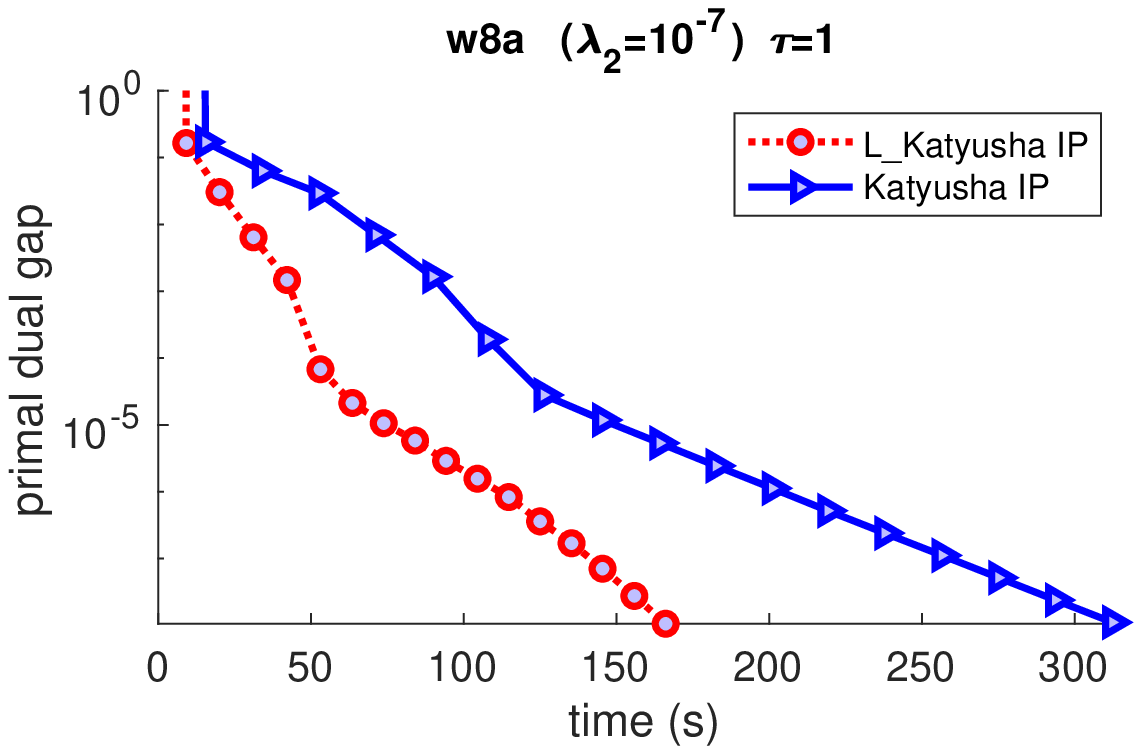}}
  \subfigure[$\tau=10$]{  \label{14d2}  \includegraphics[scale=0.38]{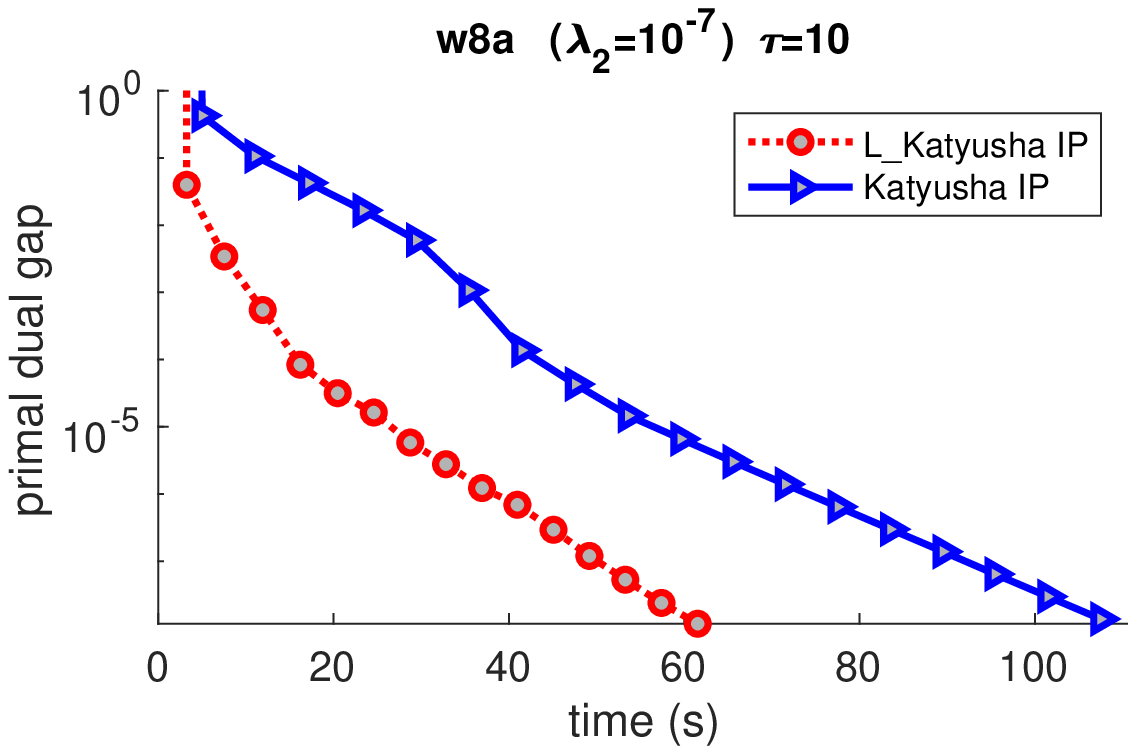}}
    \subfigure[$\tau=50$]{ \label{14d3} \includegraphics[scale=0.38]{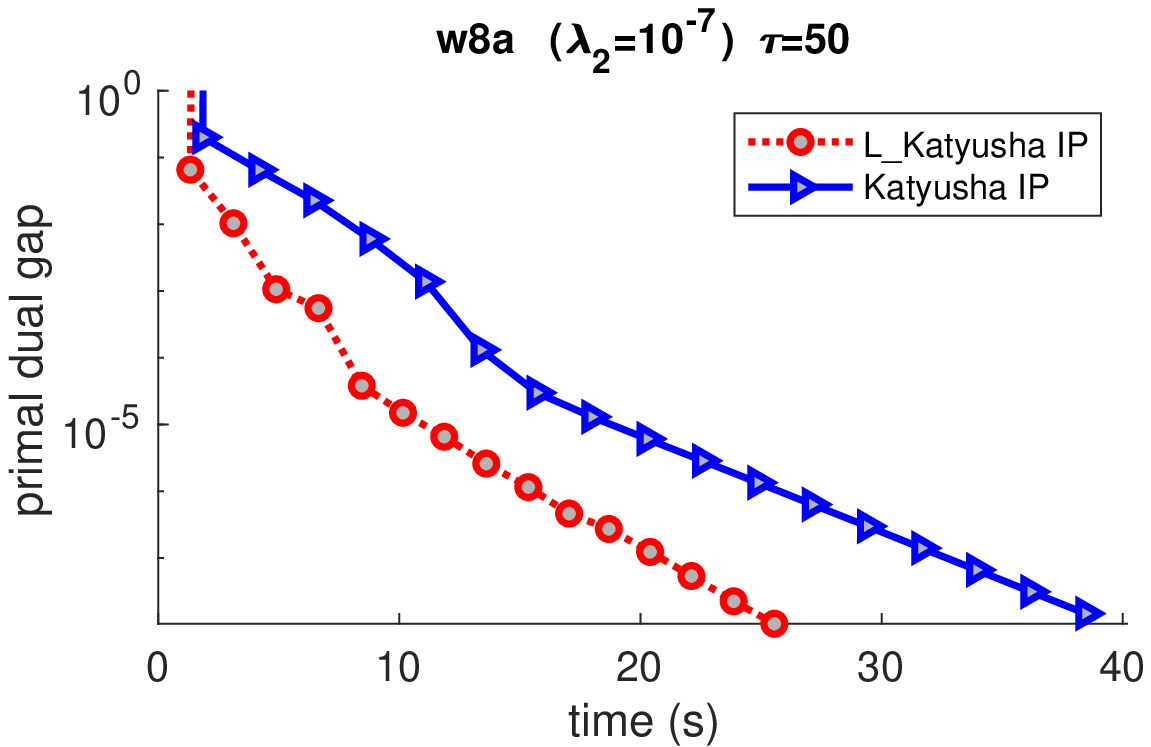}}
    \caption{L-Katyusha V.S. Katyusha, time plot, astro\_ph}\label{fig14}
 \end{figure}


\end{document}